\numberwithin{equation}{section}
\theoremstyle{plain}
\newtheorem{thm}{Theorem}[section]
\newtheorem{prop}[thm]{Proposition}
\newtheorem{lem}[thm]{Lemma}
\newtheorem{cor}[thm]{Corollary}
\newtheorem{conj}[thm]{Conjecture}
\theoremstyle{definition}
\newtheorem{defn}[thm]{Definition}
\theoremstyle{remark}
\newtheorem{rk}[thm]{Remark}
\newcommand{\ve}{\varepsilon}
\newcommand{\Ric}{\mathrm{Ric}}
\newcommand{\dist}{\mathrm{dist}}
\newcommand{\loc}{\mathrm{loc}}
\newcommand{\spt}{\operatorname{spt}}
\newcommand{\Int}{\operatorname{int}}
\newcommand{\vol}{\operatorname{vol}}
\newcommand{\Div}{\operatorname{div}}
\newcommand{\csum}{\mathbin{\#}}
\DeclareFontFamily{U}{MnSymbolC}{}
\DeclareSymbolFont{MnSyC}{U}{MnSymbolC}{m}{n}
\DeclareFontShape{U}{MnSymbolC}{m}{n}{
	<-6>  MnSymbolC5
	<6-7>  MnSymbolC6
	<7-8>  MnSymbolC7
	<8-9>  MnSymbolC8
	<9-10> MnSymbolC9
	<10-12> MnSymbolC10
	<12->   MnSymbolC12}{}
\DeclareMathSymbol{\intprod}{\mathbin}{MnSyC}{'270}
\begin{document}
\title[Noncompact fill-ins of Bartnik data]
{Noncompact fill-ins of Bartnik data}
\author{Dan A.\ Lee, Martin Lesourd, and Ryan Unger}

\address{Graduate Center and Queens College, City University of New York, 365 Fifth Avenue, New York, NY 10016}
\email{dan.lee@qc.cuny.edu}

\address{Black Hole Initiative, Harvard University, Cambridge, MA 02138}
\email{mlesourd@fas.harvard.edu}

\address{Department of Mathematics, Princeton University, Princeton, NJ 08544}
\email{runger@math.princeton.edu}

\maketitle

\begin{abstract}
We generalize Y. Shi and L.-F.\ Tam's \cite{ShiTam} nonnegativity result for the Brown-York mass, by considering nonnegative scalar curvature (NNSC) fill-ins that need only be complete rather than compact. Moreover, the NNSC fill-ins need not even be complete as long the incompleteness is ``shielded'' by a region with positive scalar curvature and occurs occurs sufficiently far away.

We accomplish this by generalizing P.~Miao's~\cite{Miao02} positive mass theorem with corners to asymptotically flat manifolds that may have other complete ends, or possibly incomplete ends that are appropriately shielded. We can similarly extend other results on the compact NNSC fill-in problem to allow for complete (or shielded) NNSC fill-ins. In particular, we prove the following generalization of a theorem of Miao~\cite{Miao20}:
Given any metric $\gamma$ on a closed manifold $\Sigma^{n-1}$, there exists a constant $\lambda$ such that for any complete (or shielded) NNSC fill-in $(\Omega^n, g)$ of $(\Sigma^{n-1},\gamma)$, we have  $\min_\Sigma H \le \lambda$, where $H$ is the mean curvature of $\Sigma$ with respect to $g$.
\end{abstract}

\tableofcontents

\section{Introduction}

We first recall the positive mass theorem for manifolds with  arbitrary complete ends. 
\begin{thm}[Positive mass theorem with arbitrary complete ends]\label{pmt}
Let $(M^n,g)$ be a complete manifold with at least one asymptotically flat end $\mathcal E$, and suppose it has nonnegative scalar curvature. Assume that $M$ is spin or $3\le n\le 7$.
Then the ADM mass of $\mathcal E$ is nonnegative. Furthermore, if the mass is zero, then $(M,g)$ is isometric to Euclidean space. 
\end{thm}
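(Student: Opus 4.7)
The plan is to split into the spin and the non-spin cases, since each relies on a fundamentally different technique. The novelty relative to the classical positive mass theorem is that we make no assumption on the other complete ends of $M$ beyond $\mathcal{E}$; standard arguments must be adapted so that only the geometry of the AF end is used.

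For the spin case, I would follow Witten's Dirac operator approach. The goal is to produce a harmonic spinor $\psi$ on $(M,g)$ that is asymptotic on $\mathcal{E}$ to a fixed nonzero parallel spinor $\psi_0$, so that the Lichnerowicz--Schr\"odinger identity
\[
\int_M \left(|\nabla\psi|^2 + \tfrac{1}{4} R_g |\psi|^2\right) dV_g = c_n\, m_{\mathrm{ADM}}(\mathcal{E})\, |\psi_0|^2
\]
immediately yields $m_{\mathrm{ADM}}(\mathcal{E}) \ge 0$. The main obstacle is producing such $\psi$ in the presence of uncontrolled ends, since the usual Fredholm theory for the Dirac operator on weighted Sobolev spaces requires uniformly AF geometry. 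I would instead exhaust $M$ by compact manifolds $M_i$ whose boundaries are pushed deep into the non-AF ends, impose a chiral (or APS-type) boundary condition that keeps the boundary term in Lichnerowicz nonnegative, and solve $D\psi_i=0$ with $\psi_i-\psi_0$ tending to zero at infinity in $\mathcal{E}$. Uniform weighted estimates on $\mathcal{E}$ together with interior elliptic regularity then extract a limit spinor $\psi$ on all of $M$. When $m=0$, varying $\psi_0$ produces a full basis of parallel spinors, forcing $(M,g)$ to be Ricci-flat and then flat; the AF end $\mathcal{E}$ upgrades this to isometry with $\mathbb{R}^n$, in particular ruling out other ends.

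For the non-spin case in dimensions $3\le n\le 7$, I would use the $\mu$-bubble/MOTS approach pioneered in dimension three by Lesourd--Unger--Yau. After a density reduction to harmonic asymptotics on $\mathcal{E}$, a negative ADM mass forces large coordinate spheres in $\mathcal{E}$ to have mean curvature strictly below $(n-1)/r$ by a definite amount. I would then set up a prescribed mean curvature variational problem in a large annular region of $\mathcal{E}$, minimizing $\mathcal{H}^{n-1}(\partial^*E) - \int_E h\, dV_g$ over Caccioppoli sets $E$ containing a fixed large coordinate sphere, where the prescribing function $h$ interpolates between the asymptotic mean curvature on the outer barrier and a strictly larger value on an inner barrier. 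Barrier arguments confine the minimizer to the annulus and produce a smooth (up to $n=7$) stable hypersurface $\Sigma^{n-1}$; stability combined with NNSC yields a conformal metric of positive scalar curvature on $\Sigma$, and iterating the Schoen--Yau dimension reduction descends to a contradiction with Gauss--Bonnet at dimension two. The hardest step will be ensuring that the minimization does not escape into one of the uncontrolled ends; I would handle this by choosing $h$ to grow rapidly on exhaustions of every non-AF end, so that any attempt to escape there is strictly penalized in the $\mu$-bubble functional. The rigidity statement for $m=0$ would follow by perturbing the construction to produce data of arbitrarily small negative mass, contradicting the strict inequality already established.
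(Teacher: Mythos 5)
You should note at the outset that the paper does not actually prove Theorem~\ref{pmt}: it is quoted from the literature, with the spin case attributed to Bartnik--Chru\'sciel~\cite{BartnikChrusciel} (Witten's argument adapted to arbitrary ends) and the non-spin case and rigidity to \cite{LUY21, LLU22} (see also \cite{Zhu22}). Your spin outline is essentially the cited strategy and is fine as a sketch, though the assertion that a chiral/APS-type condition on the boundaries of an arbitrary exhaustion ``keeps the boundary term nonnegative'' is precisely the delicate analytic content of \cite{BartnikChrusciel} and cannot be waved through. The genuine gap is in your non-spin argument, at exactly the step on which the whole theorem turns: controlling the interaction of the hypersurface with the uncontrolled ends. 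Neither of your two devices works as stated. First, there is no ``inner barrier'' for your annular minimization: inside the inner boundary of the annulus sit the compact core \emph{and all the other ends}, whose geometry is arbitrary, so no hypersurface there has a usable mean curvature sign; if the minimizer could be confined to an annulus in $\mathcal E$ you would simply be reproving the classical PMT, and the real difficulty is that the minimizer must be allowed to pass through the core and the other ends. Second, ``choosing $h$ to grow rapidly on exhaustions of every non-AF end'' conflicts with the constraint the weight must satisfy for stability to yield PSC on the bubble, namely $R_g + h^2 - 2|\nabla h| > 0$: with only $R_g\ge 0$ available on those ends, $|\nabla h|$ must be dominated by $\tfrac12 h^2$, so the only admissible rapid growth is a finite-distance blow-up profile such as $h=\pm 2/(\alpha-\rho)$ --- which is exactly the shield-type construction of Theorem~\ref{ChodoshLiShield} in this paper --- and making it work requires the barrier analysis, the existence theory for $\mu$-bubbles in the resulting (a priori noncompact) region, and tracking where the strict inequality is achieved. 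This is the technical heart of \cite{LUY21, LLU22} and of Chodosh--Li \cite{ChodoshLi}, and your proposal does not supply it.

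The rigidity statement is also short of the claim. In the non-spin case, the perturbation argument you describe (mass cannot be made negative) yields only $R_g\equiv 0$ and then $\Ric_g\equiv 0$; for $n\ge 4$ Ricci-flatness does not imply flatness, and one still has to rule out the other ends. The standard completion --- e.g.\ Bishop--Gromov volume comparison, using that the AF end of mass zero forces the asymptotic volume ratio to be $1$, whence $(M,g)\cong\Bbb R^n$ and in particular no other ends exist --- or the argument given in \cite{LLU22}, is missing from your sketch. The spin rigidity outline (a full basis of parallel spinors forces flatness, and the AF end then identifies the universal picture with $\Bbb R^n$) is acceptable, but again presupposes the existence theory for the asymptotically constant harmonic spinors on a manifold with arbitrary complete ends.
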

This theorem was first proved in the spin case using a generalization of Witten's argument \cite{Witten} by R.~Bartnik and P.~Chru\'{s}ciel~\cite{BartnikChrusciel}. (In fact, their proof generalizes to initial data sets.) Without a spin assumption, the nonnegativity part of the theorem was first proved by the second two authors and S.-T.\@ Yau under an asymptotically Schwarzschild assumption~\cite{LUY21}, by generalizing the original approach of Schoen--Yau using solutions to the Plateau problem \cite{SY79PMT}. 

In a later paper, we relaxed the asymptotically Schwarzschild assumption to the standard asymptotically flat assumption~\cite[Theorem 1.5]{LLU22} by means of a density theorem. In \cite{LLU22}, we also gave a proof of the rigidity statement, and we considered various other situations, including incomplete manifolds, negative mass, and boundaries whose mean curvature has a ``bad'' sign. 

The density theorem was independently proved by Jintian Zhu \cite{Zhu22}. (See also~\cite{CLSZ21}.) 

An important feature of \cite{LUY21, LLU22} is that, in addition to allowing for arbitrary complete ends, the results allow for the possibility of \textit{incompleteness}, so long as this incompleteness is ``shielded'' from the asymptotically flat end by a region with sufficiently positive scalar curvature. 
\begin{defn}\label{def_shield}
Let $(M^n,g)$ be a Riemannian manifold, not assumed to be complete or to have nonnegative scalar curvature everywhere. We say that $(M, g)$ \emph{contains a scalar curvature shield $U_0 \supset U_1 \supset U_2$} if $U_0$, $U_1$, and $U_2$ are open subsets of $M$ such that 
$U_0 \supset \overline U_1$, $U_1\supset \overline U_2$,
 the closure of $U_0$ in $(M,g)$ is complete,
and we have the following:
\begin{enumerate}
    \item $R_g\ge0$ on $U_0$,
    \item $R_g \ge \kappa$ on $\overline U_1 \setminus U_2$, where $\kappa>0$,
    \item $H_{\partial U_0}\le \eta$, where $\eta>0$, where the mean curvature is measured with respect to the normal pointing toward $U_0$,\footnote{Our convention is such that the mean curvature of a standard sphere in Euclidean space ``measured with respect to the outward normal'' is positive.}
    \item $D_0 > \frac{4}{\kappa D_1}-\frac{2}{\eta}$, where $D_0=\dist_g(\partial U_0, U_1)$ and $D_1 =\dist_g(\partial U_1, U_2).$ \label{shield_condition}
\end{enumerate}
\end{defn}
The conditions stated in this definition subsume the conditions used in both Theorems 1.1 and 1.7 of~\cite{LLU22}. The following theorem is simply a re-statement of Theorems  1.1 and 1.7 of~\cite{LLU22} using this vocabulary. 
\begin{thm}\label{shielding} 
Let $(M^n,g)$ have an asymptotically flat end $\mathcal E$, and suppose it contains a scalar curvature shield $U_0\supset U_1\supset U_2 \supset \mathcal E$ with $\overline {U_0\setminus \mathcal E}$ compact. 
Assume that $U_0$ is spin or $3\le n\le 7$. 
Then the ADM mass of $\mathcal E$ is strictly positive. 
\end{thm}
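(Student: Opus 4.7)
Since the authors describe this theorem as a reformulation of \cite[Theorems 1.1 and 1.7]{LLU22}, my plan is to verify that Definition~\ref{def_shield} correctly repackages those hypotheses and then invoke the results of \cite{LLU22} directly. The easy case is when $(M,g)$ is complete with $R_g\ge 0$ everywhere: Theorem~\ref{pmt} gives $m_{\mathrm{ADM}}(\mathcal E)\ge 0$, and strict positivity follows from the rigidity clause, since the presence of $R_g\ge \kappa>0$ on the nonempty open set $\overline U_1\setminus U_2$ prevents $(M,g)$ from being Euclidean.

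The substantive content lies in allowing $(M,g)$ to be incomplete outside $U_0$, and in handling the possibility of additional complete ends. Here I would adapt the $\mu$-bubble approach of \cite{LUY21, LLU22} in the non-spin case, or the spin-geometric Witten argument à la \cite{BartnikChrusciel}, restricted to $U_0$. Concretely, one seeks a stable $\mu$-bubble $\Sigma$ inside $U_0$ enclosing a large region of $\mathcal E$, using $\partial U_0$ and $\partial U_2$ as outer and inner barriers respectively. The warping function $\mu$ is determined by a one-dimensional ODE in the distance-to-$\partial U_0$ coordinate; the barrier condition at $\partial U_0$ forces $\mu$ to match (up to a dimensional constant) the mean curvature bound $H_{\partial U_0}\le \eta$, while the barrier condition at $\partial U_2$ forces $\mu$ to blow up before hitting $\partial U_2$. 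Stability demands a pointwise inequality of the schematic form $|\nabla\mu|+\mu^2\le R_g$, and combining $R_g\ge 0$ on the outer collar of width $D_0$ with $R_g\ge \kappa$ on the shield region of width $D_1$, such a $\mu$ can be produced precisely when the inequality $D_0>\frac{4}{\kappa D_1}-\frac{2}{\eta}$ of condition (4) holds. Once $\Sigma$ is constructed, a conformal dimension reduction (or a boundary spinor analysis, in the spin case) completes the proof as in \cite{LLU22}, reducing to the ordinary positive mass theorem.

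The main obstacle is the sharp analysis of this ODE: pinning down the exact coefficient in condition (4) is what forces the specific form of Definition~\ref{def_shield}, and this is the technical heart of \cite{LLU22}. Rather than reproving it, I would simply check that the four items of Definition~\ref{def_shield} exactly encode the hypotheses of \cite[Theorems 1.1 and 1.7]{LLU22}---which they do by design, this being the stated purpose of the definition---and then cite those theorems. In particular the strict inequality in (4) is what yields strict positivity of the ADM mass through the cited theorems.
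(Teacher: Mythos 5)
Your proposal is correct and matches the paper exactly: the paper offers no independent proof of Theorem~\ref{shielding}, stating only that Definition~\ref{def_shield} subsumes the hypotheses of Theorems 1.1 and 1.7 of \cite{LLU22} and that the theorem is a restatement of those results, which is precisely your plan of checking the hypotheses and citing them. Your sketch of the underlying $\mu$-bubble/shielding argument is a fair description of the cited proof (though in \cite{LLU22} the final reduction is to the positive mass theorem for initial data sets with boundary rather than a conformal dimension reduction), but since you ultimately invoke the cited theorems this does not affect correctness.
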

The key point is that $(M,g)$ need not be complete nor have nonnegative scalar curvature to hold outside $U_0$.

\begin{rk}
Note that when $D_0 > \frac{4}{\kappa D_1}$, item~\eqref{shield_condition} of Definition~\ref{def_shield} holds no matter what $\eta$ is. This case corresponds to Theorem 1.1 of~\cite{LLU22}. We ignore the case when $\eta\le0$ in Definition~\ref{def_shield}, because in this case, positivity of ADM mass was already well-known to follow from the original Schoen--Yau  argument.
\end{rk}
The assumption that $U_0$ is spin or $3\le n\le 7$ appears because in~\cite{LLU22}, we are able to reduce the theorem to the positive mass theorem for initial data sets with boundary, which is known with these hypotheses~\cite{Herzlich, LLU21}. We also note the work of S.~Cecchini and R.~Zeidler, which proves 
 a theorem in the spin case that is similar in spirit~\cite{CecchiniZeidler21}, cf.~\cite[Corollary 1.6]{LLU22}.

The purpose of the current paper is to bring the philosophy underlying the two theorems stated above to the study of quasi-local mass and nonnegative scalar curvature fill-ins. Let $(\Omega^n,g)$ be a Riemannian manifold with scalar curvature $R_g$ and boundary $(\Sigma^{n-1},\gamma)$ with mean curvature $H$, defined with respect to the outgoing normal. 
(Throughout this article, we will typically not need to assume that $\Sigma$ is connected.)
The interplay between $\Omega$, $g$, $\gamma$, $R_g$, and $H$ has been the subject of a number of works. An important concept in this area is the \textit{quasi-local mass} of $(\Omega,g)$. Although there are a number of distinct definitions of this concept, varying in their properties and intended purposes, many of these are defined in terms of the \textit{Bartnik data} $(\Sigma,\gamma,H)$ associated to~$\Omega$. A well-known example is the Brown-York mass, which can be defined as follows. 
\begin{defn}
Let $(\Sigma^{n-1},\gamma)$ be a Riemannian manifold that is isometric to a hypersurface $\Sigma^{n-1}_0$ in Euclidean $\mathbb{R}^{n}$. Given a function $\eta$ on $\Sigma$, the \emph{Brown-York mass} of $(\Sigma^{n-1},\gamma, \eta)$ is given by 
\begin{equation}\label{BY}
    m_{\mathrm{BY}}(\Sigma, \gamma, \eta)=\frac{1}{(n-1)\omega_{n-1}}\int_{\Sigma} H_0-\eta,
\end{equation}
where $H_0$ is the mean curvature of $\Sigma_0$ in $\mathbb{R}^n$ with respect to the outward pointing normal. 
\end{defn}

\begin{rk}\label{Weyl}
The Weyl embedding theorem, proved by L.~Nirenberg~\cite{Nirenberg} and A.V.~Pogorelov~\cite{Pogorelov}, tells us that when $n=3$ and $(\Sigma, \gamma)$ is a 2-sphere with positive Gauss curvature, it can be isometrically embedded into Euclidean $\mathbb{R}^3$, and hence the Brown-York mass can be defined for such surfaces. For $n=3$, C.~Mantoulidis and P.~Miao \cite{MantoulidisMiao} defined a meaningful generalization of Brown-York mass to more general surfaces, using the concept of fill-ins by compact manifolds with nonnegative scalar curvature. Similar notions are studied in \cite{shi1, shi3}.  
\end{rk}

In a landmark paper \cite{ShiTam}, Yuguang Shi and Luen-Fai Tam proved a positivity and rigidity result for Brown-York mass, which may be regarded as a quasi-local version of the positive mass theorem.

\begin{thm}[Shi--Tam \cite{ShiTam}]\label{first}
Let $(\Omega^n,g)$ be a compact Riemannian manifold with boundary $(\Sigma^{n-1}, \gamma)$.  Assume that $R_g\geq 0$, $H_\Sigma>0$ (where $H_\Sigma$ is the mean curvature of $\Sigma$ with respect to the outward pointing normal), and that each component $\Sigma_i$ of $\Sigma$
isometrically embeds as a strictly convex hypersurface in $\mathbb{R}^{n}$. Then for each $i$,
\begin{equation}\label{firsteq}
   m_{\mathrm{BY}}(\Sigma_i, \gamma, H_\Sigma)\geq 0.
\end{equation}
Moreover, if any $m_{\mathrm{BY}}(\Sigma_i, \gamma, H_\Sigma)= 0$, then $(\Omega^n,g)$ is isometric to a subset of Euclidean space.
\end{thm}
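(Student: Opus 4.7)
The plan is to follow the original Shi--Tam strategy: extend $(\Omega,g)$ by gluing on, along each boundary component $\Sigma_i$, a carefully constructed asymptotically flat manifold of vanishing scalar curvature with prescribed boundary mean curvature $H_\Sigma$, and then derive nonnegativity of $m_{\mathrm{BY}}(\Sigma_i,\gamma,H_\Sigma)$ from a monotone quasi-local quantity together with the positive mass theorem.

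For each boundary component $\Sigma_i$, fix the isometric embedding $\Phi_i\colon(\Sigma_i,\gamma)\hookrightarrow\mathbb{R}^n$ as a strictly convex hypersurface $\Sigma_{0,i}$, and let $\Sigma_{0,i}^r$ denote the outward parallel hypersurface at signed distance $r\ge 0$. Strict convexity ensures that $\{\Sigma_{0,i}^r\}$ is a smooth foliation of the exterior region $E_i\subset\mathbb{R}^n$ by strictly convex hypersurfaces with positive Euclidean mean curvature $H_0(r)$. On $E_i$ I would introduce a warped metric
\begin{equation*}
\tilde g = u^2\,dr^2 + g^{\mathrm{Euc}}_r,
\end{equation*}
where $g^{\mathrm{Euc}}_r$ is the Euclidean-induced metric on $\Sigma_{0,i}^r$ and $u>0$ is to be solved for. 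The condition $R_{\tilde g}\equiv 0$ reduces to a quasilinear parabolic equation for $u$ in the ``time'' variable $r$, and I would prescribe $u(0,\cdot)=H_0(0)/H_\Sigma$. This choice is arranged so that, upon gluing $(E_i,\tilde g)$ to $(\Omega,g)$ via $\Sigma_{0,i}\cong\Sigma_i$, the mean curvatures of the corner hypersurface from the two sides agree in the sense required by Miao's corner condition. I would invoke the Shi--Tam existence theory to produce a global smooth solution $u$ with $u\to 1$ at infinity and a sharp decay estimate ensuring that $(E_i,\tilde g)$ is asymptotically flat.

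Next, I would form $(\hat M,\hat g)$ by gluing $(\Omega,g)$ to $\bigsqcup_i(E_i,\tilde g)$. By construction, $\hat g$ is smooth on each piece, Lipschitz across each corner $\Sigma_i$, satisfies $R_{\hat g}\ge 0$ away from the corners, and meets the matching condition there. Miao's positive mass theorem with corners (equivalently, a mollification that preserves $R\ge 0$ distributionally) then gives that the ADM mass of each asymptotically flat end of $\hat M$ is nonnegative. Meanwhile, the Shi--Tam monotonicity formula applied along the foliation $\{\Sigma_{0,i}^r\}$ shows that
\begin{equation*}
m_i(r):=\frac{1}{(n-1)\omega_{n-1}}\int_{\Sigma_{0,i}^r}H_0\Bigl(1-\tfrac{1}{u}\Bigr)\,d\mathrm{vol}_{g^{\mathrm{Euc}}_r}
\end{equation*}
is nonincreasing in $r$, with $m_i(0)=m_{\mathrm{BY}}(\Sigma_i,\gamma,H_\Sigma)$ (from $u(0,\cdot)=H_0(0)/H_\Sigma$, since $H_0(1-1/u)=H_0-H_\Sigma$) and $\lim_{r\to\infty}m_i(r)$ equal to the ADM mass of the $i$-th end. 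Chaining these inequalities yields \eqref{firsteq}.

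For rigidity, if $m_{\mathrm{BY}}(\Sigma_i,\gamma,H_\Sigma)=0$ for some $i$, then the ADM mass of the corresponding end must also vanish and $m_i(r)$ is constant; the rigidity clause of the positive mass theorem then forces $(\hat M,\hat g)$ to be isometric to Euclidean space, whence $(\Omega,g)$ embeds isometrically into $\mathbb{R}^n$. The main analytic obstacle is the second step: proving long-time existence and sharp decay of $u$, which is what powers both the asymptotic flatness of $(E_i,\tilde g)$ and the validity of the monotonicity formula. The rigidity case requires additional care because $\hat g$ is only Lipschitz across the $\Sigma_i$; this is handled by a Miao-type smoothing that respects both the corner matching condition and the sign of scalar curvature.
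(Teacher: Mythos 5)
For the nonnegativity statement \eqref{firsteq}, your proposal is exactly the route the paper takes (and attributes to \cite{ShiTam, Miao02}): construct the quasi-spherical scalar-flat extension $u^2\,dr^2+g^{\mathrm{Euc}}_r$ over the exterior of each convex image $\Sigma_{0,i}$ with $u(0,\cdot)=H_0/H_\Sigma$, glue along the corner, apply the positive mass theorem with corners, and chain it with the Shi--Tam monotonicity of $\int_{\Sigma^r_{0,i}}H_0(1-u^{-1})$, whose limit is the ADM mass of that end. This is precisely the skeleton recapitulated in the proof of Theorem~\ref{shieldingBY}, so no complaints there.

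The rigidity step, however, has a genuine gap as you have written it. You propose that $m_{\mathrm{BY}}(\Sigma_i,\gamma,H_\Sigma)=0$ forces the ADM mass of the glued end to vanish and that ``the rigidity clause of the positive mass theorem'' then applies, with the Lipschitz corner ``handled by a Miao-type smoothing.'' But the smoothing (Theorem~\ref{thm:Miao}/Theorem~\ref{approximation}) only produces smooth NNSC metrics $\tilde g_\delta$ with $m_{\mathrm{ADM}}(\tilde g_\delta)\to m_{\mathrm{ADM}}(\hat g)$; none of them has mass exactly zero, so the equality case of the smooth positive mass theorem is never directly applicable, and passing to the limit loses the rigidity conclusion. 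The actual equality analysis is different: one first shows that a strict jump $H_+<H_-$ at any point, or $R_g>0$ anywhere, would force strictly positive mass (this is Miao's Proposition 4.2 argument, reproduced in Theorem~\ref{corner-equality} here), concluding only that $H_+\equiv H_-$, $R_g\equiv 0$ and, with more work, $\Ric_g\equiv 0$; upgrading this to ``$(\Omega,g)$ is isometric to a Euclidean domain'' requires a separate regularity/flatness argument across the merely Lipschitz interface, as in \cite[Lemma 5]{EichmairMiaoWang} or the Ricci-flow smoothing of \cite{McFeron-Szekylihidi} (in $n=3$ one can also use Miao's Section 5 argument). The paper's own remark after Theorem~\ref{shieldingBY} emphasizes exactly this point: in their noncompact setting they can only conclude Ricci-flatness because these techniques do not obviously extend. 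So for the compact Theorem~\ref{first} you should either invoke \cite{EichmairMiaoWang} (or \cite{McFeron-Szekylihidi}) for the equality case, or supply that analysis; smoothing plus PMT rigidity does not suffice.
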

Shi and Tam's original argument requires $\Omega$ to be spin, but the non-spin case is also true in all dimensions in which the positive mass is true. The nonnegativity follows from combining Shi and Tam's argument with~\cite{Miao02}, while the equality case was proved in~\cite{EichmairMiaoWang}, which also slightly relaxes the hypotheses of Theorem~\ref{first}.

Our first observation is that one does not need $\Omega$ to be compact in Theorem~\ref{first}. We can instead assume either completeness or the presence of a scalar curvature shield.

\begin{thm}\label{shieldingBY}
The nonnegativity of $m_\mathrm{BY}(\Sigma_i ,\gamma,H_\Sigma)$ in Theorem~\ref{first} holds, with the first sentence replaced by either of the following: 
\begin{itemize}
    \item Let $(\Omega^n, g)$ be a complete Riemannian manifold with compact boundary $(\Sigma^{n-1}, \gamma)$. 
    \item Let $(\Omega^n, g)$ be a Riemannian manifold with compact boundary $(\Sigma^{n-1}, \gamma)$, in the manifold sense, containing a scalar curvature shield $U_0\supset U_1\supset U_2\supset \Sigma$ with $\overline U_0$ compact.
\end{itemize}
Moreover, if any $m_{\mathrm{BY}}(\Sigma_i, \gamma, H_\Sigma)= 0$, then we must be in the first case, and $(\Omega^n,g)$ must be Ricci-flat. 
\end{thm}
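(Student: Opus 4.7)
The plan is to follow the original Shi--Tam proof, but at the final step invoke the generalized corner positive mass theorems developed in this paper rather than Miao's classical result. For each component $\Sigma_i$ of $\Sigma$, isometrically embed $\Sigma_i$ as a strictly convex hypersurface bounding a compact body $B_i \subset \mathbb{R}^n$, and construct the Shi--Tam asymptotically flat extension $(E_i, g_i^{\mathrm{ST}})$ as the warped product $u_i^2\,dr^2 + \gamma_r$ on $\mathbb{R}^n \setminus \Int B_i$, where $r$ is the Euclidean distance to $\Sigma_i$ and $u_i$ is chosen by solving a first-order ODE along $r$ so that $R_{g_i^{\mathrm{ST}}} \equiv 0$ with the initial condition $u_i|_{\Sigma_i} = H_0/H_\Sigma$. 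This normalization forces the mean curvature of $\Sigma_i$ from the $E_i$ side (with respect to the normal $\partial_r$, which agrees with the outward normal of $\Omega$ after gluing) to equal $H_\Sigma$. Forming $\tilde M := \Omega \cup_\Sigma (\sqcup_i E_i)$ therefore produces a Riemannian manifold with no mean-curvature jump across $\Sigma$, i.e., with nonnegative scalar curvature across the corner in the sense of \cite{Miao02}.

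In the complete bullet, $\tilde M$ is complete and has an asymptotically flat end for each $\Sigma_i$ in addition to whatever ends $\Omega$ already had; the corner version of Theorem~\ref{pmt} then yields $m_{\mathrm{ADM}}(E_i) \ge 0$. In the shielded bullet, the original shield $U_0 \supset U_1 \supset U_2 \supset \Sigma$ extends to $\tilde U_0 := U_0 \cup (\sqcup_j E_j) \supset \tilde U_1 := U_1 \cup (\sqcup_j E_j) \supset \tilde U_2 := U_2 \cup (\sqcup_j E_j)$, each of which is open in $\tilde M$ (because $\Sigma \subset U_2$) and contains the new AF ends. Conditions (1)--(4) of Definition~\ref{def_shield} all survive: (1) holds because $R_{g_j^{\mathrm{ST}}} = 0$; (2) reduces to $R_g \ge \kappa$ on $\overline{U_1}\setminus U_2$, since $\Sigma \subset U_2$ keeps the added $E_j$ out of $\overline{\tilde U_1}\setminus\tilde U_2$; (3) is unchanged because $\partial \tilde U_0 = \partial U_0$ still lies inside $\Omega \setminus \Sigma$; and (4) persists because any path from $\partial U_0$ to $\tilde U_1$ in $\tilde M$ must first reach $U_1 \supset \Sigma$ from within $\Omega$, so $\tilde D_0 = D_0$ and similarly $\tilde D_1 = D_1$. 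Hence the corner version of Theorem~\ref{shielding} gives $m_{\mathrm{ADM}}(E_i) > 0$ strictly.

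Shi--Tam's classical monotone quantity along $\{r = t\}$ interpolates from $m_{\mathrm{BY}}(\Sigma_i, \gamma, H_\Sigma)$ at $r = 0$ down to $m_{\mathrm{ADM}}(E_i)$ at $r = \infty$, giving $m_{\mathrm{BY}}(\Sigma_i, \gamma, H_\Sigma) \ge m_{\mathrm{ADM}}(E_i)$. Combining with the previous step yields the nonnegativity claim, strict in the shielded case. For rigidity, $m_{\mathrm{BY}}(\Sigma_i, \gamma, H_\Sigma) = 0$ rules out the shielded case by strictness, and in the complete case we then have $m_{\mathrm{ADM}}(E_i) = 0$; the rigidity clause of Theorem~\ref{pmt} forces $\tilde M \cong \mathbb{R}^n$ isometrically, so $\Omega$ sits as a subset of Euclidean space and is in particular Ricci-flat. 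The main obstacle is not this reduction, which is structurally identical to Shi--Tam's, but rather the noncompact corner PMTs themselves: the extensions of Miao \cite{Miao02} to the settings of Theorems~\ref{pmt} and~\ref{shielding}, which are precisely the new PMT with corners advertised in the abstract and established elsewhere in the body of the paper.
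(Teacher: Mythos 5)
Your treatment of the nonnegativity part is essentially the paper's own proof: build the Shi--Tam scalar-flat, asymptotically flat extension with matching boundary metric and mean curvature, glue along $\Sigma$ to get a corner with the distributional NNSC condition, apply the corner positive mass theorem with arbitrary complete ends (Corollary~\ref{pmtwithcorners}) or its shielded version, and use Shi--Tam monotonicity $m_{\mathrm{BY}}\ge m_{\mathrm{ADM}}$. Your verification that the shield $U_0\supset U_1\supset U_2\supset\Sigma$ survives the gluing (openness of $U_j\cup E_j$, unchanged $\partial U_0$, no shortcuts through the extensions because any path into $E_j$ first meets $\Sigma\subset U_2$) is exactly the bookkeeping the paper leaves implicit, and it is correct. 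One citation-level remark: the strict positivity you use in the shielded case is not delivered by the $\ve$-approximation corollary (which, after smoothing, only preserves the mass up to $\ve$ and hence yields $\ge 0$); it is the second bullet of Theorem~\ref{corner-equality}, whose proof needs a uniform positive lower bound on the masses of the approximating metrics. Since that statement is proved in the paper, this is a matter of citing the right result rather than a gap.

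The genuine gap is in your rigidity argument. When $m_{\mathrm{BY}}(\Sigma_i,\gamma,H_\Sigma)=0$ you conclude $m_{\mathrm{ADM}}(E_i)=0$ and then invoke the rigidity clause of Theorem~\ref{pmt} to get $\tilde M\cong\mathbb{R}^n$. But Theorem~\ref{pmt} applies to complete manifolds carrying an honest (at least $C^2$) NNSC metric, whereas $\tilde M$ carries only a metric with a corner: even though you arranged $H_+=H_-$ across $\Sigma$ by construction, the metric is merely Lipschitz there, and the only bridge to the smooth theorem is Theorem~\ref{approximation}, which perturbs both the metric and the mass by $\ve$. Zero mass of the corner metric therefore does not transfer to zero mass of any smooth approximant, and the rigidity clause of Theorem~\ref{pmt} cannot be quoted. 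Indeed, the conclusion you draw---that $\Omega$ embeds isometrically in Euclidean space---is \emph{stronger} than what the theorem asserts, and the paper's remark after Theorem~\ref{shieldingBY} explicitly states that this stronger conclusion is not known for noncompact fill-ins (outside $n=3$ with spherical components), precisely because the known corner-rigidity techniques do not extend. The paper instead proves the weaker statement via Theorem~\ref{corner-equality}: in the complete case, zero mass forces $H_+=H_-$ and Ricci-flatness away from $\Sigma$, established not by quoting smooth PMT rigidity but by a separate perturbation argument (first a conformal change killing $\chi R_g$ to show $R_g\equiv 0$ off $\Sigma$, then the family $\bar g_t=g_{t^2}+t\chi\Ric_g$ and a linearization of scalar curvature to force $\int\chi|\Ric_g|^2=0$), together with the strict positivity of the shielded case to rule out the second bullet. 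Your proof needs this input; as written, the equality case is unsupported.
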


\begin{rk}
For the equality case, if $n=3$, and each $\Sigma_i$ is a sphere, then an argument from~\cite[Section 5]{Miao02} implies that $(\Omega^n,g)$ must be isometric to a compact subset of Euclidean space. It would be desirable if we could conclude this more generally, but the known techniques for proving this in~\cite[Lemma 5]{EichmairMiaoWang} and~\cite{McFeron-Szekylihidi} do not extend to noncompact fill-ins in an obvious way.
\end{rk}
The proof of nonnegativity in Theorem~\ref{shieldingBY} is a straightforward combination of Shi and Tam's work~\cite{ShiTam} with generalizations of Theorems~\ref{pmt} and~\ref{shielding} that allow for the presence of ``corners.'' These generalization follows from approximating the metric with a corner by a $C^2$ metric with nonnegative scalar curvature (Theorem~\ref{approximation}). We provide these proofs in Section~\ref{pmtsection}, where we follow Pengzi Miao's argument from~\cite{Miao02} and combine it with techniques from~\cite{LLU22}. Some additional arguments are needed to handle the equality case.

Theorem~\ref{first} can be thought of as placing restrictions on possible ``fill-ins'' of Bartnik data. 
\begin{defn}
Given a compact Riemannian manifold $(\Sigma^{n-1}, \gamma) $ equipped with a function $\eta$,
  we say that a connected Riemannian manifold with boundary $(\Omega^n,g)$ is a \emph{nonnegative scalar curvature (NNSC) fill-in of $(\Sigma, \gamma, \eta)$} if $\partial \Omega$ can be diffeomorphically identified with $\Sigma$ in such a way that 
  $g$ induces the metric $\gamma$ on $\Sigma$, the induced mean curvature $H_{\Sigma}$ (with respect to the outward normal) is equal to $\eta$, and $R_g\ge0$.
\end{defn}

M.~Gromov conjectured that, ignoring the mean curvature, given any compact manifold $\Omega^n$ with boundary $\Sigma^{n-1}$ and any metric $\gamma$ on $\Sigma$, there exists a metric $g$ on $\Omega$ inducing $\gamma$ such that $R_g>0$. This was recently confirmed by Shi, Wenlong Wang, and Guodong Wei \cite{shi3}.  
Shortly thereafter, Miao used 
this result to prove the following (an important case of which already appeared in~\cite{shi3}).
\begin{thm}[Miao \cite{Miao20}]\label{miao2}
Let $\Omega^n$ be a compact manifold with boundary $\Sigma^{n-1}$. Given any Riemannian metric $\gamma$ on $\Sigma$, there exists a constant $\lambda$ depending only on $\gamma$ and (the topology of) $\Omega$ such that if
there exists a \emph{compact} NNSC fill-in of $(\Sigma,\gamma,\eta)$, then
\begin{equation}\label{fill in mean curvature}
    \min_\Sigma \eta \le \lambda.
\end{equation} 
\end{thm}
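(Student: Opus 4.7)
The plan is to combine the Shi--Wang--Wei PSC fill-in theorem with a corner-smoothing argument in the spirit of Miao's 2002 positive mass theorem with corners.

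\emph{Step 1 (reference PSC cap).} Apply Shi--Wang--Wei to the compact manifold $\Omega$ to obtain a smooth Riemannian metric $\hat g$ on $\Omega$ with $\hat g|_\Sigma = \gamma$ and $R_{\hat g}>0$. Let $\hat\eta$ be the mean curvature of $\Sigma = \partial\Omega$ in $(\Omega,\hat g)$ with respect to the outward normal, and set
\[
\lambda := -\min_\Sigma \hat\eta,
\]
which depends only on $\gamma$ and the topology of $\Omega$ (through the choice of $\hat g$).

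\emph{Step 2 (gluing and smoothing).} Suppose for contradiction that a compact NNSC fill-in $(\tilde\Omega,g)$ of $(\Sigma,\gamma,\eta)$ exists with $\min_\Sigma \eta > \lambda$. Then $\eta+\hat\eta>0$ pointwise on $\Sigma$. Identifying the two boundary copies of $\Sigma$ via the common induced metric $\gamma$, glue $\tilde\Omega$ and $\Omega$ along $\Sigma$ to form a closed manifold $N = \tilde\Omega\cup_\Sigma\Omega$ carrying a Lipschitz continuous metric $G$, smooth on each piece, with a scalar curvature corner along $\Sigma$ whose mean-curvature jump is the positive function $\eta+\hat\eta$. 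Apply the corner-smoothing theorem (Theorem~\ref{approximation} of this paper, used throughout for analogous purposes) to replace $G$ by a smooth metric $G'$ on $N$ with $R_{G'}\ge 0$ everywhere, agreeing with $G$ outside an arbitrarily small neighborhood of $\Sigma$; since $R_{\hat g}>0$ on $\Omega\subset N$, we also have $R_{G'}>0$ on a nonempty open set. A standard Kazdan--Warner type conformal deformation then upgrades $G'$ to a smooth Riemannian metric on $N$ with strictly positive scalar curvature, so $N$ admits a PSC metric.

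\emph{Main obstacle.} The remaining step---deriving a contradiction from the existence of a PSC metric on $N$---is the crux of Miao's proof. Because the fill-in $\tilde\Omega$ is a priori arbitrary, the topology of $N = \tilde\Omega\cup_\Sigma\Omega$ varies with $\tilde\Omega$, so no single Gromov--Lawson or Schoen--Yau topological PSC obstruction applies uniformly. The heart of the argument is therefore to leverage the chosen topology of the reference $\Omega$ so that the glued manifold $N$ inherits a PSC obstruction regardless of $\tilde\Omega$, or else to replace the topological contradiction with a quantitative PSC estimate that directly bounds $\eta$ in terms of $\hat\eta$. This topological/analytic input is precisely what causes $\lambda$ to depend on the topology of $\Omega$, and is the most delicate step to make precise.
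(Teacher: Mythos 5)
There is a genuine gap, and it is exactly the step you flag as the ``main obstacle'': the contradiction cannot be extracted from your setup, because with the reference cap chosen to be $\Omega$ itself there is in general \emph{no} PSC obstruction on the glued manifold $N=\tilde\Omega\cup_\Sigma\Omega$. (Take $\Sigma=S^{n-1}$, $\Omega=B^n$, and the fill-in a round hemisphere: $N\cong S^n$ carries PSC, so no contradiction is available, even though the theorem is nontrivial in this case.) The missing idea, which is the heart of Miao's argument \cite{Miao20} (and of the proof of Theorem~\ref{miaocomplete} in this paper), is to modify the reference \emph{before} applying Shi--Wang--Wei (Theorem~\ref{cobordismlemma}): one takes a connected sum with a torus at an interior point, i.e.\ applies Theorem~\ref{cobordismlemma} to $\Omega\csum T^n$ (or, as in this paper, to $X=(\Sigma\times[0,1])\csum T^n$, gluing a copy of the fill-in to \emph{each} boundary component, which even removes the null-cobordance hypothesis). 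Then, no matter what the fill-in is, the resulting closed manifold has the form $T^n\csum X'$ for some closed $X'$, and the Schoen--Yau theorem \cite{SY79} (or Theorem~\ref{ChodoshLiThm} in the complete setting) forbids PSC, giving the contradiction. The constant $\lambda$ is then the maximum of the (inward) mean curvature of the boundary of this torus-modified reference, and the dependence on the topology of $\Omega$ enters only through this choice; no further ``obstruction analysis depending on $\tilde\Omega$'' is needed. Without the torus summand your proof cannot be completed, so the proposal is missing the key idea rather than merely a technical detail.

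A secondary point: Theorem~\ref{approximation} is stated for manifolds with an asymptotically flat end (the conformal factor is normalized at infinity), so it does not apply verbatim to the closed manifold $N$. The paper circumvents this either by first performing a Gromov--Lawson PSC connected sum with a positive scalar curvature asymptotically flat manifold and then smoothing, or, in the appendix, by combining Miao's smoothing Lemma~\ref{miaosmoothening} with Kazdan's Neumann-eigenvalue argument (Proposition~\ref{Kazdan}) to produce a PSC metric directly; your appeal to a ``standard Kazdan--Warner type deformation'' after an NNSC smoothing would need to be replaced by one of these arguments, since the smoothing itself produces negative scalar curvature in the collar that must be absorbed by the eigenvalue/conformal step rather than assumed away.
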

\begin{rk}
$\Sigma$ is allowed to be disconnected in Theorem \ref{miao2}.
\end{rk}
\begin{rk}
In Theorem \ref{miao2}, $n$ denotes a dimension in which the following statement holds: a \textit{compact} manifold of the form $T^n\csum X^n$ cannot admit positive scalar curvature. This statement is contained in \cite{SY79} for $3\leq n\leq 7$, and claimed for all $n$ in \cite{SY17}.
\end{rk}

Note that for the case where $(\Sigma, \gamma)$ satisfies the hypotheses of Theorem~\ref{first} (that is, each component isometrically embeds as a strictly convex hypersurface of Euclidean space), Theorem~\ref{miao2} is an immediate corollary. Also, Gromov previously proved that Theorem~\ref{miao2} holds if one only considers compact fill-ins which are spin~\cite{GromovFour}.

We observe that Theorem \ref{miao2} can be strengthened to apply to \emph{complete} fill-ins rather than merely compact ones. We prove it by combining Miao's argument in \cite{Miao20} with Otis Chodosh and Chao Li's theorem on nonexistence of complete positive scalar curvature metrics on $T^n \# X^n$~\cite{ChodoshLi} (and linking them together via Theorem~\ref{approximation}).  We can also prove it for \emph{shielded} fill-ins by proving a shielded version of Chodosh and Li's theorem (Theorem~\ref{ChodoshLiShield}).
\begin{thm}\label{miaocomplete}
Let $3\leq n\leq 7$. Given any Riemannian metric $\gamma$ on a closed manifold $\Sigma^{n-1}$,  
there exists a constant $\lambda$ depending only on $\gamma$ such that for any function $\eta$ on $\Sigma$, if there exists a NNSC fill-in of $(\Omega^n, g)$ of $(\Sigma,\gamma,\eta)$ satisfying either
\begin{itemize}
    \item $(\Omega, g)$ is \emph{complete}; or
    \item $(\Omega, g)$ contains a scalar curvature shield $U_0\supset U_1\supset U_2\supset \Sigma$ with $\overline U_0$ compact;
\end{itemize}
then
\begin{equation}
    \min_\Sigma \eta\le \lambda.
\end{equation} 
\end{thm}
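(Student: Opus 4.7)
The plan is to adapt Miao's strategy from \cite{Miao20}: construct in advance, depending only on $\gamma$, a compact Riemannian ``cap'' of positive scalar curvature filling in $(\Sigma,\gamma)$, glue it to any hypothetical NNSC fill-in $(\Omega,g)$ with large boundary mean curvature, and invoke a topological obstruction on the resulting complete (or shielded) manifold. Concretely, apply the Shi--Wang--Wei theorem \cite{shi3} to $(\Sigma,\gamma)$ to fix once and for all a compact Riemannian $n$-manifold $(W_0,h_0)$ with $\partial W_0 = \Sigma$, $h_0|_\Sigma = \gamma$, and $R_{h_0} > 0$; compactness bounds the mean curvature $H_{W_0}$ of $\Sigma$ in $W_0$ with respect to the outward normal. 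Then perform a Gromov--Lawson surgery connect sum in the interior of $W_0$ with a copy of $T^n$, producing a compact cap $(W,h)$ with $\partial W = \Sigma$, $h|_\Sigma = \gamma$, $R_h > 0$, and boundary mean curvature still equal to $H_{W_0}$. Set
\begin{equation}
    \lambda := -\min_\Sigma H_{W_0},
\end{equation}
a quantity depending only on $\gamma$ (through the fixed choice of $W_0$).

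Suppose, for contradiction, that there is a complete (respectively shielded) NNSC fill-in $(\Omega,g)$ of $(\Sigma,\gamma,\eta)$ with $\min_\Sigma \eta > \lambda$. Glue $\Omega$ to $W$ along $\Sigma$ to form a connected Riemannian manifold $(M,\bar g)$ with a Lipschitz corner along $\Sigma$. Since
\[ \eta + H_W \ge \eta - \lambda > 0 \quad \text{on } \Sigma, \]
the distributional corner condition for nonnegative scalar curvature is satisfied. The generalized corner-smoothing result (Theorem~\ref{approximation}) then produces a smooth metric $\tilde g$ on $M$ with $R_{\tilde g} \ge 0$ everywhere, equal to $\bar g$ outside a prescribed small neighborhood of $\Sigma$; in particular $R_{\tilde g} > 0$ on an open subset of $W$. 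In the complete case, $(M,\tilde g)$ is itself complete. In the shielded case, because $\overline W$ is compact with $R_h > 0$, the shield $U_0 \supset U_1 \supset U_2 \supset \Sigma$ for $\Omega$ extends to a shield for $M$ by enlarging $U_0$ to $U_0 \cup W$; the conditions of Definition~\ref{def_shield} are preserved (the relevant mean curvature on $\partial U_0$ and the distances $D_0,D_1$ are unchanged by this enlargement). A standard compactly supported conformal deformation localized in $W$ then upgrades $\tilde g$ to a metric of strictly positive scalar curvature on all of $M$, preserving completeness or the shield.

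Topologically, $M \cong Y \csum T^n$, where $Y = \Omega \cup_\Sigma W_0$ is a manifold without boundary, since the Gromov--Lawson surgery was performed in the interior of $W_0$. In the complete case this contradicts Chodosh--Li's theorem \cite{ChodoshLi}, which prohibits complete metrics of positive scalar curvature on $Y \csum T^n$ for $3 \le n \le 7$; in the shielded case it contradicts the shielded analogue (Theorem~\ref{ChodoshLiShield}). The main obstacle in this plan is establishing the shielded Chodosh--Li theorem: one must adapt the $\mu$-bubble argument of \cite{ChodoshLi} to tolerate incompleteness outside a sufficient scalar curvature shield, in parallel with how Theorem~\ref{shielding} generalizes the positive mass theorem. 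Checking that the shield of $\Omega$ survives the gluing and corner smoothing is also nontrivial but should follow the pattern developed in \cite{LLU22}.
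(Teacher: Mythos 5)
Your proposal follows the right general strategy (a PSC cap built from the Shi--Wang--Wei theorem, glued to the hypothetical fill-in across a distributionally NNSC corner, smoothed by Theorem~\ref{approximation}, then killed by Chodosh--Li or its shielded version), but it has a genuine gap at the very first step: you posit a compact manifold $W_0$ with $\partial W_0=\Sigma$. Such a null-cobordism need not exist --- no topological hypothesis on $\Sigma$ is made in the statement, and removing the null-cobordance requirement of Miao's Theorem~\ref{miao2} is precisely one of the points of this theorem. The paper circumvents this by never capping $\Sigma$ off: it takes $X=(\Sigma\times[0,1])\csum T^n$, whose boundary is \emph{two} copies of $\Sigma$, applies Theorem~\ref{cobordismlemma} to get a PSC metric on $X$ inducing $\gamma$ on both boundary components, and glues \emph{two} copies of the fill-in $(\Omega,g)$, one to each boundary component. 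The resulting manifold is still of the form $T^n\csum(\text{something})$, so the Chodosh--Li obstruction (or Theorem~\ref{ChodoshLiShield}) applies, and the cylinder exists for every closed $\Sigma$. Your argument only proves the theorem for null-cobordant $\Sigma$, i.e., essentially the setting already covered by Theorem~\ref{miao2} (upgraded to complete/shielded fill-ins).

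A second, smaller error: you cannot produce the PSC metric on the cap by a ``Gromov--Lawson PSC connected sum'' of $(W_0,h_0)$ with $T^n$, since $T^n$ carries no PSC metric and the Gromov--Lawson construction needs positive scalar curvature on both summands near the gluing points. The correct move (and the one in the paper) is to form the connected sum with $T^n$ \emph{topologically} first and then apply Theorem~\ref{cobordismlemma} to the resulting compact manifold with boundary; for manifolds with boundary the torus summand is no obstruction. (Gromov--Lawson is used in the paper only to attach an auxiliary asymptotically flat PSC end at a point where $R>0$, which is needed because Theorem~\ref{approximation} is stated for manifolds with an asymptotically flat end --- your proposal applies Theorem~\ref{approximation} to a manifold with no such end, which either requires attaching such an end as in the paper or the separate compact-corner smoothing argument given in the paper's appendix.) Finally, the concluding ``upgrade to strictly positive scalar curvature'' is unnecessary: Theorem~\ref{ChodoshLiThm} already forbids complete metrics with $R\ge 0$ everywhere and $R>0$ somewhere, and in the shielded case Theorem~\ref{ChodoshLiShield} is formulated for NNSC metrics with $R>0$ on the torus piece.
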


One advantage of this theorem is that unlike Theorem \ref{miao2},
it does not require $\Sigma$ to be null-cobordant. In fact, no topological assumption at all is made. 
Conceptually, it ought to be easier to construct complete NNSC fill-ins than compact ones. Even for null-cobordant manifolds, there is usually no canonical way to fill it in topologically. (For example, given a torus, one must decide which homology classes will become trivial in the fill-in.) In contrast, the product $[0,\infty)\times\Sigma$ is a canonical candidate to carry a complete NNSC metric. Indeed, for simple examples, one can easily accomplish this with a warped product construction.\\ \\
\textbf{Question.} Do there exist examples of Bartnik data $(\Sigma^{n-1},\gamma,\eta)$, with null-cobordant $\Sigma$, admitting complete NNSC fill-ins but \emph{not} compact NNSC fill-ins?\\ 

We have defined a ``fill-in'' $\Omega$ so that its boundary is identified with $\Sigma$, but for most purposes, it is just as useful to have $\Omega$ whose boundary is the union of $\Sigma$ with other minimal components. (The results described in this article apply to such fill-ins, but we leave out the precise statements for simplicity.) For this reason, a better (but harder) version of the question above would be: Do there exist examples of Bartnik data admitting complete NNSC fill-ins but no compact NNSC fill-ins in this more general sense that allows minimal boundary components?




Gromov \cite[p.\ 163]{GromovFour} conjectured the following strengthening of Theorem~\ref{miao2}.
\begin{conj}[Gromov]\label{gromovconj}
Given any Riemannian metric $\gamma$ on a closed manifold $\Sigma^{n-1}$,  
there exists a constant $\Lambda$ depending only on $\gamma$ such that for any function $\eta$ on $\Sigma^{n-1}$, if
there exists a compact NNSC fill-in of $(\Sigma^{n-1}, \gamma, \eta)$, then 
\begin{equation}\label{gromovineq}
\int_\Sigma \eta \le  \Lambda.
\end{equation}
\end{conj}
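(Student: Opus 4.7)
The plan is to try to adapt the contradiction argument behind Theorem~\ref{miao2} (and its complete-fill-in strengthening Theorem~\ref{miaocomplete}) from the pointwise minimum $\min_\Sigma \eta$ to the integral $\int_\Sigma \eta$. Conjecture~\ref{gromovconj} is a long-standing open problem of Gromov, so what follows is a strategy rather than a proof one should expect to carry out without significant new input.

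Suppose toward contradiction that there is a sequence of compact NNSC fill-ins $(\Omega_k^n, g_k)$ of $(\Sigma, \gamma, \eta_k)$ with $\int_\Sigma \eta_k \to \infty$. The natural analytic tool is a Shi-Tam-style exterior collar: attach $([0,\infty)\times\Sigma,\, u_k(t)^2\, dt^2 + \gamma_t)$ along $\partial \Omega_k$, with $\gamma_t$ evolving by a quasilinear parabolic system so that the collar is scalar-flat and the slice $\{0\}\times\Sigma$ has mean curvature exactly $\eta_k$, ensuring that the distributional scalar curvature across the join is nonnegative. Shi-Tam's monotonicity formula would then give an upper bound $\int_{\{t\}\times\Sigma} H_t\, d\mu_{\gamma_t} \le \int_\Sigma \eta_k$ for all $t$, with a limiting value at $t=\infty$ determined purely by $(\Sigma,\gamma)$ (in the classical convex case, $\int_{\Sigma_0} H_0$ for a Weyl embedding $\Sigma_0 \subset \R^n$). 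Using Theorem~\ref{approximation} to replace the $C^0$ join by a smooth metric of nonnegative scalar curvature, one would glue on a reference exterior engineered to impose a torus factor on the resulting closed manifold (as in the proofs of Theorems~\ref{miao2} and~\ref{miaocomplete}), producing a closed manifold of the form $T^n \csum X^n$ carrying NNSC that is not identically zero, thereby contradicting the Schoen-Yau / Gromov-Lawson nonexistence theorem.

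The principal obstacle is the construction of such a collar for an arbitrary metric $\gamma$: the Shi-Tam parabolic system is known to evolve globally while preserving the required monotonicity only when $(\Sigma,\gamma)$ admits a strictly convex isometric embedding into Euclidean space, and no substitute is available in the generality required. Moreover, unlike $\min_\Sigma \eta$, a large $\int_\Sigma \eta$ cannot be localized to any single point, so the pointwise handle-attachment strategy that powers Theorems~\ref{miao2} and~\ref{miaocomplete} does not apply directly. Promising but unverified directions include working inside a warped product reference $(\Sigma\times [0,\infty), dt^2 + f(t)^2 \gamma)$, using a $\mu$-bubble capillary cutoff to extract a weighted mean-curvature monotonicity, or first replacing $\eta$ by a controlled rearrangement for which a Shi-Tam collar is available. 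At present, none of these is known to yield the sharp integral bound, and this genuine loss of localization --- together with the analytic difficulty of extending the Shi-Tam construction to general boundary metrics --- is why Conjecture~\ref{gromovconj} resists the techniques developed in this paper.
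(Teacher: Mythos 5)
The statement you were asked about is Conjecture~\ref{gromovconj}: it is attributed to Gromov and is stated in the paper as an \emph{open} conjecture, with no proof given (the authors only remark that one can analogously pose it for complete or shielded fill-ins). So there is no ``paper proof'' to compare against, and your submission, which you correctly frame as a strategy rather than a proof, does not close the gap either --- the gap being the entire conjecture. Your diagnosis of why the paper's machinery does not extend is essentially accurate, and worth making precise. First, the smoothing/gluing results used for Theorems~\ref{miao2} and~\ref{miaocomplete} (Lemma~\ref{miaosmoothening}, Theorem~\ref{approximation}) require the \emph{pointwise} corner condition $H_+ \le H_-$ along $\Sigma$; a bound on $\int_\Sigma \eta$ gives no pointwise comparison with the mean curvature of any reference exterior, so the whole handle-attachment contradiction scheme never gets off the ground. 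This is exactly the ``loss of localization'' you name, and it is the structural reason the pointwise result $\min_\Sigma \eta \le \lambda$ is accessible while the integral bound is not. Second, the Shi--Tam collar you propose as the analytic substitute is only known to exist, with the monotonicity of $\int_{\{t\}\times\Sigma} H_t$ that drives the mass estimate, when each component of $(\Sigma,\gamma)$ embeds as a strictly convex hypersurface of $\mathbb{R}^n$; but in that case the integral bound already follows immediately from Theorem~\ref{first} (or Theorem~\ref{shieldingBY}) with $\Lambda = \int_\Sigma H_0$, so the collar route adds nothing in the only regime where it is available.

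One smaller point: in your sketch you propose gluing on a torus-carrying exterior and then invoking the nonexistence of NNSC-not-identically-zero metrics on $T^n \csum X^n$. Even granting a collar with the right monotonicity, the quantity that the monotonicity controls is an ADM-type mass (equivalently a Brown--York-type quantity), not a scalar-curvature obstruction on a closed manifold; to turn ``$\int_\Sigma \eta$ large'' into a contradiction you would need the limiting value of the monotone quantity to be finite and determined by $\gamma$ alone, which is precisely what fails without a convex (or otherwise special) reference embedding. So the two ingredients you flag --- a general-$\gamma$ substitute for the Shi--Tam foliation, and a way to convert integral mean-curvature excess into a pointwise or distributional NNSC violation --- are genuinely missing, and with them the argument; the conjecture remains open.
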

It is natural, in view of the current paper, to consider the conjecture with ``compact" replaced by ``complete or shielded." \\ \indent 
Lastly, we note the recent paper of Jianchun Chu, Man-Chun Lee, and Jintian Zhu \cite{CLZ22} which proves a version of Theorem \ref{pmt} that allows for a singular set \textit{away} from the asymptotically flat end. It is likely that the ideas in \cite{CLZ22} can be combined with those here to prove statements about fill-ins that are both singular and noncompact. \\ 

\textbf{Acknowledgements.} D.~Lee thanks Harvard's Center of Mathematical Sciences and Applications for their hospitality while this research was being carried out. M.~Lesourd thanks the Gordon and Betty Moore Foundation, and the John Templeton Foundation for support. R.~Unger thanks the University of Cambridge for hospitality as this paper was being finished.

\section{Preliminaries}\label{prelim}
In this section, we recall some definitions and results from \cite[Section 2]{LLU22}.
\subsection{Definitions}

\begin{defn}\label{strinfty}
Let $M^n$ be a noncompact manifold (possibly with boundary). A \emph{topological structure of infinity} for $M$ is a choice of open subset $\mathcal{E}\subset M$ that does not intersect $\partial M$ and a diffeomorphism 
\begin{equation}\Phi:\mathcal E\to \Bbb R^n\setminus \overline{B}_{r_0}\end{equation} for some positive number $r_0$.
The coordinates $x^i$ define a flat metric on $\mathcal E$, which we extend arbitrarily to a smooth background metric on all of $M$ and denote by $\overline g$. 

\end{defn}

\begin{defn}\label{sobolev}
Let $(\mathcal E, \Phi)$ be a topological structure of infinity for $M^n$, and select $\overline g$ as in Definition~\ref{strinfty}.
Let $N$ be a closed subset of $M$ which contains $\mathcal E$ and such that $N\setminus \mathcal E$ is compact. (For example, we might take $N=\overline{\mathcal E}$.) Given $k\in\Bbb N$, $p\ge 1$, and $s\in\Bbb R$, we define the \emph{weighted Sobolev space} $W^{k,p}_s(N)$ to be the space of functions $u\in W^{k,p}_\loc(N)$ with finite norm 
\[\|u\|_{W^{k,p}_s(N)}=\|u\|_{W^{k,p}(N\setminus \mathcal E)}+\sum_{i=0}^k \|\partial^i u\|_{L^p_{s-i}(\mathcal E)},\]
where the weighted $L^p$ norm is defined by 
\[\|u\|_{L^p_s(\mathcal E)}=\left(\int_\mathcal E ||x|^{-s}u|^p \,\frac{dx}{|x|^n}\right)^\frac 1p,\]
and all integrals and norms are computed using the metric $\overline g$.\footnote{For fixed $N$, different choices of $\overline g$ will yield equivalent norms.}
Note that $|x|^s\notin L^p_s(\mathcal E)$ but $|x|^{s-\delta}\in L^p_s(\mathcal E)$ for any $\delta>0$. 
\end{defn}

\begin{defn}\label{defAF}
Let $(M^n,g)$ be a noncompact smooth Riemannian manifold, and let
$(\mathcal E, \Phi)$ be a topological structure of infinity for $M$. 
Let $p>n$ and $q>\frac{n-2}{2}$. We say that $(M, g)$ is \emph{$(p,q)$ asymptotically flat} if in the coordinates defined by $\Phi$, 
\[g_{ij}-\delta_{ij}\in W^{2,p}_{-q}(\mathcal E).\]
Furthermore, we assume that the scalar curvature of $g$, $R_g$, is integrable over the end $\mathcal E$.

Under these hypotheses, the \emph{ADM mass of $g$ in the end $\mathcal E$} is well-defined by 
\begin{equation}m_\mathrm{ADM}(\mathcal E, g)=\lim_{r\to\infty}\frac{1}{2(n-1)\omega_{n-1}}\int_{|x|=r} (g_{ij,i}-g_{ii,j})\frac{x^j}{|x|}\,d\mu_{S_r, \bar{g}}.\end{equation}
We will often write this as $m_\mathrm{ADM}(g)$ when $\mathcal E$ is understood from context.
\end{defn}

On any asymptotically flat manifold $(M, g)$ as above, one can construct a smooth proper function $\rho:M\to (0,\infty)$ with the following properties:
\begin{itemize}
    \item $\rho=|x|$ on $\mathcal{E}$ while $\rho$ is bounded away from $\mathcal{E}$, and 
    \item each $M_\sigma=\{\rho\ge\sigma\}$ is a complete subset of $M$ and the $M_\sigma$'s 
exhaust $M$ in the sense that $M=\bigcup_{\sigma>0} M_\sigma$. 
\end{itemize}
Note that when $\sigma$ is a regular value of $\rho$, $\partial M_\sigma$ is smooth, and thus $M_\sigma$ is a complete smooth asymptotically flat Riemannian manifold with boundary.

The following useful lemma is essentially Propositions 2.12 and 2.15 of~\cite{LLU22}, extended to include a case of interest to us.
In the following we decompose a function $V=V^+ - V^-$, where $V^+$ and $V^-$ represent the positive and negative parts of $V$ respectively. 

\begin{lem}\label{maintool}
Let $(M^n,g)$ be an $(p,q)$ asymptotically flat manifold, where $p>n$, $q> \frac{n-2}{2}$, and let $0<\sigma_0<\sigma_1$.  Let $V$ be a smooth integrable function on $M$ that is compactly supported in $M_{\sigma_0}$.  There exist constants $\ve_0>0$ and C, depending only on $n, p, q$,
 the $C^{1,\alpha}_{-q}(M_{\sigma_1})$ norm of $g-\overline g$, the $C^0(M_{\sigma_0/2}\setminus M_{\sigma_1})$ norm of $g$, and an upper bound on $\|V\|_{L^\infty(M_{\sigma_0} \setminus M_{3\sigma_1})}$, such that the following is true:

If we assume \textbf{either} that
\[ \|V^-\|_{L^\frac n2(M_{\sigma_0})}+\|V\|_{L^p_{-q-2}(M_{\sigma_0})}+\|V\|_{L^\frac{2n}{n+2}(M_{\sigma_0})} <\ve_0,\]
\textbf{or} that $V\ge0$ globally, 
then there exists a globally defined function $u$ on $M$ such that 
\[
 -a\Delta_g u + Vu  =0,     
\]
everywhere, where $a=4\frac{n-1}{n-2}$, such that $u-1\in W^{2,p}_{-q}(M_{\sigma_0})$. Moreover, $u$ has positive upper and lower bounds,
 and we can define the metric $\tilde g= u^\frac{4}{n-2}g$. This metric  $\tilde g$ is $(p,q)$ asymptotically flat, with scalar curvature
\begin{equation}
    R_{\tilde g}=(R_g-V)u^{-\frac{4}{n-2}}\label{conformalformula}
\end{equation}
and ADM mass
\begin{align}
    m_\mathrm{ADM}(\tilde g)&=    m_\mathrm{ADM}(g)-\frac{1}{2(n-1)\omega_{n-1}}\int_M Vu\,d\mu_g\label{massformula} \\
    \label{miaochangeinmass}
&\le m_{\textnormal{ADM}}(g) - \frac{1}{2(n-1)\omega_{n-1}} \int_{M_\sigma} \left(a|\nabla_g u|^2+Vu^2 \right) d\mu_g. 
\end{align}
Finally, the function $u$ satisfies
\begin{equation}\label{u-estimate}
\sup_M |u-1|+\|u-1\|_{W^{2,p}_{-q}(M_{2\sigma_1})}\le C\left(  \|V\|_{L^p_{-q-2}(M_{\sigma_0})} +\|V\|_{L^\frac{2n}{n+2}(M_{\sigma_0})}\right).
\end{equation}




\end{lem}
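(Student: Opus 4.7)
The plan is to treat Lemma 2.4 as a refinement of \cite[Propositions 2.12 and 2.15]{LLU22}, with the genuinely new ingredient being unconditional solvability when $V \geq 0$ globally. Since $V$ is compactly supported in $M_{\sigma_0}$ and, for every regular value $\sigma \in (0, \sigma_0)$ of $\rho$, the subdomain $M_\sigma$ is a complete asymptotically flat Riemannian manifold with smooth compact boundary $\partial M_\sigma$, I would set up the Neumann boundary value problem
\begin{equation*}
-a\Delta_g u_\sigma + V u_\sigma = 0 \text{ in } M_\sigma, \qquad \partial_\nu u_\sigma = 0 \text{ on } \partial M_\sigma, \qquad u_\sigma - 1 \in W^{2,p}_{-q}(M_\sigma),
\end{equation*}
solve it uniformly in $\sigma$, and pass to the limit $\sigma \to 0$. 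Because $M = \bigcup_{\sigma > 0} M_\sigma$ by properness of $\rho$, the limiting $u$ will be defined on all of $M$ and solve the PDE globally.

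In the small-$V$ case, the Neumann problem on $M_\sigma$ can be handled exactly as in \cite[Prop.~2.12]{LLU22}: writing $u_\sigma = 1 + v_\sigma$, one inverts $-a\Delta_g + V$ on $W^{2,p}_{-q}(M_\sigma)$ as a small perturbation of $-a\Delta_g$ via the contraction mapping theorem, where smallness of $\|V^-\|_{L^{n/2}} + \|V\|_{L^p_{-q-2}} + \|V\|_{L^{2n/(n+2)}}$ is precisely what makes the perturbation small in operator norm. For the new case $V \geq 0$, I would instead minimize the strictly convex, coercive functional
\begin{equation*}
E_\sigma(v) = \int_{M_\sigma} \Bigl(\tfrac{a}{2}|\nabla_g v|^2 + \tfrac{V}{2} v^2 + V v\Bigr)\, d\mu_g
\end{equation*}
over the weighted Hilbert space $W^{1,2}_{-q}(M_\sigma)$; coercivity comes from the weighted Hardy--Sobolev inequality on the asymptotically flat end (supplemented by $V \geq 0$), and the linear term is continuous since $V \in L^{2n/(n+2)}$. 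The unique minimizer solves $-a\Delta v_\sigma + V v_\sigma = -V$ with the natural Neumann condition, so $u_\sigma = 1 + v_\sigma$ satisfies the PDE; elliptic regularity lifts $v_\sigma$ to $W^{2,p}_{-q}$, and the maximum principle combined with the Hopf lemma on $\partial M_\sigma$ (together with $u_\sigma \to 1$ at infinity) yields $0 < u_\sigma \leq 1$.

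The estimate \eqref{u-estimate}, uniform in $\sigma$, follows from standard weighted elliptic theory applied to $-a\Delta v_\sigma = -V(1+v_\sigma)/a$: the source is compactly supported in $L^p_{-q-2}$ with norm controlled by $\|V\|_{L^p_{-q-2}}$ times the pointwise bound on $v_\sigma$. A diagonal subsequence then converges in $C^2_{\mathrm{loc}}$ to $u$ defined on all of $M$, solving the PDE globally with positive upper and lower bounds, and satisfying \eqref{u-estimate}. The conformal formula \eqref{conformalformula} is classical; formula \eqref{massformula} comes from integrating $\Delta u = Vu/a$ over $\{\sigma \leq \rho \leq R\}$ and letting $R \to \infty$ and $\sigma \to 0$---the Neumann condition makes the interior boundary term at $\partial M_\sigma$ vanish in the approximation, and the asymptotic flux yields the change of ADM mass---while inequality \eqref{miaochangeinmass} comes from the same computation after multiplying the equation by $u$ rather than $1$, producing the bulk term $a|\nabla u|^2 + Vu^2$.

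The main obstacle is tracking constants carefully enough that the final estimate depends on $V$ only through the norms and $L^\infty$ upper bound appearing in the hypothesis. In the small-$V$ case the contraction argument handles this automatically; in the new $V \geq 0$ case, the a priori bound $0 < u_\sigma \leq 1$ must be combined with weighted elliptic regularity, with careful bookkeeping of how the constants depend on the local $L^\infty$ bound $\|V\|_{L^\infty(M_{\sigma_0} \setminus M_{3\sigma_1})}$ that enters the elliptic estimates for $v_\sigma$ on the strip where the weighted norm is measured. A secondary subtlety is the clean passage of the mass formula to the limit $\sigma \to 0$, which relies precisely on the interior Neumann boundary term being identically zero in each approximation.
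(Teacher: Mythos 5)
Your overall skeleton (Neumann problems on the exhausting domains $M_\sigma$, uniform estimates, limit $\sigma\to 0$, flux computation for the mass with the Neumann term killing the inner boundary contribution, Fatou-type loss giving \eqref{miaochangeinmass}) is the same as the paper's. But there is a genuine gap at the point you pass to the limit in the new case $V\ge 0$: you assert that the limit $u$ has a \emph{positive lower bound} on all of $M$, and nothing in your argument delivers this. The bound $0<u_\sigma\le 1$ only survives the limit as $u\ge 0$, and since $M$ may have arbitrary other ends (captured only as $\sigma\to0$), the infimum of $u$ could a priori be zero, either at an interior point or along one of those uncontrolled ends. The paper devotes a separate argument to exactly this: the uniform estimate \eqref{u-estimate} for the $u_{\sigma_i}$ forces $u_{\sigma_i}>\tfrac12$ outside a fixed coordinate radius in $\mathcal E$; since $u_{\sigma_i}$ is harmonic outside $M_{\sigma_0}$ and satisfies the Neumann condition at $\partial M_{\sigma_i}$, the maximum principle pushes the minimum of $u_{\sigma_i}$ into a fixed compact subset $K$ of $M_{\sigma_0}$; if $\inf_M u=0$ one then gets a point of $K$ where $u=0$, contradicting the strong maximum principle for $-a\Delta+V$ with $V\ge0$ (as $u\equiv\!\!\not\,0$, being asymptotic to $1$). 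Without some such argument, the conclusion that $\tilde g=u^{4/(n-2)}g$ is a (complete, asymptotically flat) metric is unsupported; this is precisely the conclusion the later sections rely on.

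Two secondary issues. First, your variational existence step is set in the wrong space: the Dirichlet energy only controls the critical Hardy weight $\|v\|_{L^2_{-(n-2)/2}}$, so the functional is not coercive on $W^{1,2}_{-q}$ when $q>\tfrac{n-2}{2}$; you should minimize in the Hardy/$\dot H^1$-type space and recover $u-1\in W^{2,p}_{-q}$ afterwards from weighted elliptic theory, using that the source $-V$ is compactly supported (this is repairable, and note the paper instead simply quotes the solvability of the Neumann problem from \cite[Prop.\ 2.13]{LLU22}). Second, the stated dependence of the constants --- only on the $C^0$ norm of $g$ on the transition region $M_{\sigma_0/2}\setminus M_{\sigma_1}$ and an $L^\infty$ bound on $V$ there --- is the whole point of restating the LLU22 propositions, because the lemma is later applied to Miao-smoothed metrics with only $C^1$ control there. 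Your appeal to ``standard weighted elliptic theory'' for the global sup bound would implicitly demand more regularity of $g$ on that region; the paper achieves the $C^0$-only dependence by replacing interior elliptic estimates there with the De Giorgi--Nash--Moser estimate for the divergence-form operator. You flag constant-tracking as a concern but do not supply this mechanism, so as written the estimate \eqref{u-estimate} with the advertised dependencies is not established.
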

\begin{rk}
Since $u$ has positive upper and lower bounds, any end of $M$ that is complete with respect to $g$ will also be complete with respect to $\tilde{g}$.
\end{rk}

\begin{rk}
Being able to assume only $C^0$ control (or at most, $C^1$ control) over the metric in the compact set $M_{\sigma_0}\setminus M_{\sigma_1}$ (rather than $C^{1,\alpha}$) is important later on when we apply Lemma~\ref{maintool} to the metrics produced by Miao smoothing (see Lemma \ref{miaosmoothening}, item \eqref{C1-condition}).
\end{rk}
\begin{proof}
Consider the first case where $\|V^-\|_{L^\frac n2(M_{\sigma_0})}+\|V\|_{L^p_{-q-2}(M_{\sigma_0})}+\|V\|_{L^\frac{2n}{n+2}(M_{\sigma_0})} <\ve_0$. With the exception of the mass inequality~\eqref{miaochangeinmass}, which we will discuss later, the conclusion of Lemma~\ref{maintool} is essentially the one given in Proposition 2.16 of~\cite{LLU22}, with the estimate following directly from the one stated in Proposition 2.13 of~\cite{LLU22}. The only difference is that we are now claiming that the constant depends only on a $C^0$ bound (rather than a $C^{1,\alpha}$ bound) on the metric over the set $M_{\sigma_0/2}\setminus M_{\sigma_1}$. The claim essentially follows from using De Giorgi--Nash--Moser estimates instead of elliptic interior estimates over that region. 

In more detail, the proofs of Propositions 2.13 and 2.16 of~\cite{LLU22} clearly establish that 
\[ \sup_{M_{2\sigma_1}} |u-1|+\|u-1\|_{W^{2,p}_{-q}(M_{2\sigma_1})}\le C\left(  \|V\|_{L^p_{-q-2}(M_{\sigma_1})} +\|V\|_{L^\frac{2n}{n+2}(M_{\sigma_1})}\right),\]
where $C$ depends on the $C^{1,\alpha}_{-q}(M_{\sigma_1})$ norm of $g-\overline g$. So our goal is to bound $\sup_{M_{\sigma_0}\setminus M_{2\sigma_1}} |u-1|$, and then this will bound $\sup_M |u-1|$ via the maximum principle, since $V$ is supported in $M_{\sigma_0}$. The De Giorgi--Nash--Moser estimates \cite[Theorem 8.17]{GT} applied to the equation
\[-a\Delta_g(u-1)+V(u-1)=-V,\]
tell us that 
\begin{equation} \label{DGNM-estimate}
\sup_{M_{\sigma_0}\setminus M_{2\sigma_1}} |u-1| 
\le C \left( \|u-1\|_{L^\frac{2n}{n-2}(M_{\sigma_0/2}\setminus M_{3\sigma_1})} + \|V\|_{L^p(M_{\sigma_0/2}\setminus M_{3\sigma_1})}\right),
\end{equation}
where this $C$ depends only on the $C^0(M_{\sigma_0/2}\setminus M_{3\sigma_1})$ norm of $g$ and an upper bound on $\|V\|_{L^{\infty} (M_{\sigma_0/2}\setminus M_{3\sigma_1})}$.
 The $V$ term in~\eqref{DGNM-estimate} is obviously bounded by $\|V\|_{L^p_{-q-2}(M_{\sigma_0})}$, while we can bound $\|u-1\|_{L^\frac{2n}{n-2}(M_{\sigma_0/2}\setminus M_{3\sigma_1})}$
 by a constant times $\|V\|_{L^\frac{2n}{n+2}(M_{\sigma_0})}$ exactly as in equation (2.13) of \cite{LLU22}, noting that the constant only depends on $C^{0}(M_{\sigma_0/2})$ control of $g$ (and $\ve_0$) because the argument leading to (2.13) of \cite{LLU22} does not require the interior estimate (2.6) of \cite{LLU22}. The result follows.
 
So we need only discuss the case $V\ge0$. By the first part of Proposition 2.13 of~\cite{LLU22}, we know that for any $\sigma\in(0,\sigma_0)$ that is a regular value of $\rho$, there exists solution $u_\sigma$ to the problem 
\begin{align}
 \label{2a-new}   -a\Delta_g u_\sigma + Vu_\sigma  &=0 \quad\text{in }M_\sigma\\
 \label{2b-new}  \nu_g(u_\sigma) &=0 \quad\text{on }\partial M_\sigma,\\
u_\sigma-1 &\in W^{2,p}_{-q}(M_\sigma)
\end{align}
From the proof of~\cite[Proposition 2.13]{LLU22} together with the above De Giorgi--Nash--Moser argument, we can see that 
\[ \sup_{M_{\sigma}} |u_{\sigma}-1|+\|u_{\sigma}-1\|_{W^{2,p}_{-q}(M_{2\sigma_1})}\le C\left(
\|V\|_{L^p_{-q-2}(M_{\sigma_0})}\sup_{M_{\sigma}} |u_{\sigma}-1| + \|V\|_{L^p_{-q-2}(M_{\sigma_0})} +\|V\|_{L^\frac{2n}{n+2}(M_{\sigma_0})}\right).\]
Since $V\ge0$, the maximum principle implies that $0<u_\sigma \le1$, so we actually have the estimate  
\begin{equation}\label{V-positive}
\sup_{M_{\sigma}} |u_{\sigma}-1|+\|u_{\sigma}-1\|_{W^{2,p}_{-q}(M_{2\sigma_1})}\le 2C\left(
 \|V\|_{L^p_{-q-2}(M_{\sigma_0})} +\|V\|_{L^\frac{2n}{n+2}(M_{\sigma_0})}\right).
 \end{equation}
In particular, we have a uniform global supremum bound on $u_{\sigma}$ that is independent of $\sigma$, and each $u_\sigma$ satisfies the same elliptic equation~\eqref{2a-new}. Therefore there exists a sequence $\sigma_i\to0$
 such that $u_{\sigma_i}$ weakly converges in $W^{2,p}_{\mathrm{loc}}$ to a nonnegative globally defined function $u$ that satisfies the same equation~\eqref{2a-new}.

To see why $u$ has a positive lower bound, suppose that $\inf_M u = 0$. Then $\lim_{i\to\infty} \inf_M u_{\sigma_i} = 0$.  
The bound on $\|u_{\sigma_i} -1\|_{W^{2,p}_{-q}(M_{2\sigma_1})}$ from~\eqref{V-positive} implies that there is a fixed coordinate radius, independent of~$i$, in the asymptotically flat end, outside which $u_{\sigma_i}>\frac{1}{2}$. On the other hand, since $u_{\sigma_i}$ is harmonic outside $M_{\sigma_0}$ and satisfies a Neumann boundary condition, the maximum principle guarantees that for large $i$, the function $u_{\sigma_i}$ must be minimized somewhere in the compact set $K$ which is the part of $M_{\sigma_0}$ lying within the fixed coordinate radius described above.
It follows that there exists a point in $K$ where $u= 0$, but this contradicts the maximum principle for $u$ since $V\ge0$.

Now consider the metric $\tilde g= u^{\frac{4}{n-2}}g$. The conclusion that $\tilde g$ is  $(p,q)$ asymptotically flat and has mass given by~\eqref{massformula} follows exactly as in~\cite[Proposition 2.16]{LLU22}. 

The only thing left to check is~\eqref{miaochangeinmass}, whose proof is the same in both cases. Writing $ g_i= u_{\sigma_i}^{\frac{4}{n-2}}g$, a standard computation shows
\begin{align*}
     m_\mathrm{ADM}(g_i)
    -  m_\mathrm{ADM}(g)
    &=\lim_{r\to\infty}\frac{-2}{(n-2)\omega_{n-1}} \int_{|x|=r} u_{\sigma_i} \nu_g(u_{\sigma_i})\, d\mu_{S_r, g} \\
   &=\frac{-2}{(n-2)\omega_{n-1}} \int_{M} \Div_g (u_{\sigma_i} \nabla u_{\sigma_i})
  \, d\mu_{g} \\
&=   \frac{-1}{2(n-1)\omega_{n-1}} \int_{M_\sigma} \left(a|\nabla_g u_{\sigma_i}|^2+Vu_{\sigma_i}^2 \right) d\mu_g,
\end{align*}
but when we take the limit we only obtain an inequality because we have to apply Fatou's lemma.
\end{proof}

\section{Positive mass theorem with corners and arbitrary ends}\label{pmtsection}

We recall the definition of a manifold containing a ``corner'' across a hypersurface $\Sigma$ in the sense of \cite{Miao02}. 
\begin{defn}\label{corner}
Let $M^n$ be a manifold, and let $\Sigma$ be a smooth closed hypersurface of $M$ that separates $M$ in the sense that there exist closed subsets $\Omega_+, \Omega_-\subset M$ such that $M=\Omega_+ \cup \Omega_-$ and $\Sigma=\partial \Omega_\pm$. We say that a metric $g$ on $M$ \emph{admits a corner along $\Sigma$} if $g=g_+$ on $\Omega_+$ and $g=g_-$ on $\Omega_-$, such that $g_{\pm} \in C^{2,\alpha}_{\text{loc}} (\Int \Omega_\pm) \cap C^{2} (\Omega_\pm)$, and $g_+$ and $g_-$ induce the same metric on $\Sigma$. Let $H_{\pm}$ denote the mean curvature of $\Sigma$ computed with respect to $g_{\pm}$ with respect to the normal that points toward~$\Omega_+$.
\end{defn}

Although the following result is not stated explicitly in~\cite{Miao02}, it follows fairly directly from the arguments presented there.

\begin{thm}[Miao~\cite{Miao02}]\label{thm:Miao}
Let $(M^n, g)$ be a complete Riemannian manifold, all of whose ends are $(p,q)$ asymptotically flat, admitting a corner along a compact hypersurface $\Sigma$ as in Definition~\ref{corner}. Assume that $g$ has nonnegative scalar curvature away from $\Sigma$, and that $H_+ \le H_-$ along~$\Sigma$.

Then there exists an $\ve_0>0$ such that for any $0<\ve<\ve_0$, there exists a $C^2$
 metric $\tilde{g}$ on $M$ 
  which is also $(p,q)$ asymptotically flat in each end, has nonnegative scalar curvature everywhere, and satisfies the following hold:
 \begin{enumerate}
     \item $\| \tilde{g}-g\|_{C^0} < \ve$,
     \item $\| \tilde{g}-g\|_{W^{2,p}_{-q}(M\setminus U_{\ve})} < \ve$ where $U_{\ve}$ is the $\ve$-neighborhood of $\Sigma$,\label{W2p-close}
     \item $| m_\mathrm{ADM}(\mathcal E, \tilde g)-  m_\mathrm{ADM}( \mathcal E, g)|<\ve$ on each end $\mathcal E$.
 \end{enumerate}
 \end{thm}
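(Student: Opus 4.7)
The plan is to combine two ingredients: a Miao-style smoothing of the corner along $\Sigma$ (the forward-referenced Lemma \ref{miaosmoothening}) and the conformal deformation machinery of Lemma \ref{maintool}. The idea is first to produce, for each small $\delta$, a smooth metric $g_\delta$ close to $g$ whose scalar curvature has small negative part, and then to conformally deform $g_\delta$ to arrange nonnegativity of the scalar curvature globally while tracking how the mass changes.

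First, I would invoke Miao smoothing to construct, for each small $\delta > 0$, a $C^2$ metric $g_\delta$ on $M$ that coincides with $g$ outside a $\delta$-neighborhood $U_\delta$ of $\Sigma$, is $C^0$-close to $g$, and whose negative-part scalar curvature $R_{g_\delta}^-$ (supported in $U_\delta$) is small as $\delta \to 0$ in each of the norms $L^{n/2}$, $L^p_{-q-2}$, and $L^{\frac{2n}{n+2}}$. The role of the assumption $H_+ \le H_-$ here is that the singular scalar-curvature distribution of the corner metric at $\Sigma$ is a nonnegative measure proportional to $(H_- - H_+)\, d\mathcal{H}^{n-1}_\Sigma$, so any reasonable smooth approximation $R_{g_\delta}$ has its negative part vanishing in the limit.

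Second, choose $\sigma_0$ so that $\Sigma$ lies in the compact region inside $M_{\sigma_0}$ within each asymptotically flat end, and for $\delta$ small apply Lemma \ref{maintool} with $V := -R_{g_\delta}^-$. This $V$ is nonpositive and compactly supported in $M_{\sigma_0}$, and $V^- = R_{g_\delta}^-$ satisfies the smallness hypothesis by the previous step. The lemma yields a positive $u_\delta$ with $u_\delta - 1 \in W^{2,p}_{-q}$ solving $-a\Delta_{g_\delta} u_\delta + V u_\delta = 0$. By formula \eqref{conformalformula}, the metric $\tilde g_\delta := u_\delta^{4/(n-2)} g_\delta$ has scalar curvature $(R_{g_\delta} + R_{g_\delta}^-) u_\delta^{-4/(n-2)} = R_{g_\delta}^+ u_\delta^{-4/(n-2)} \ge 0$ everywhere, as desired. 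For a manifold with multiple asymptotically flat ends, the construction can be carried out end-by-end (or with a multi-end variant of Lemma \ref{maintool}) to control the mass in each one.

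The three required conclusions then follow from combining the two sets of estimates. For (1), the $C^0$ distance $\|\tilde g_\delta - g\|_{C^0}$ is bounded by $\|g_\delta - g\|_{C^0}$ (small by the smoothing) plus a multiple of $\sup_M |u_\delta - 1|$ (small by the global supremum bound in Lemma \ref{maintool}). For (2), on the complement of $U_\varepsilon$ with $\delta \le \varepsilon$, we have $g_\delta = g$ and $\tilde g_\delta - g = (u_\delta^{4/(n-2)} - 1) g$, which is controlled in $W^{2,p}_{-q}$ by \eqref{u-estimate} applied to $u_\delta - 1$. For (3), since $g_\delta = g$ outside a compact set, $m_\mathrm{ADM}(\mathcal E, g_\delta) = m_\mathrm{ADM}(\mathcal E, g)$, and the mass formula \eqref{massformula} bounds $|m_\mathrm{ADM}(\mathcal E, \tilde g_\delta) - m_\mathrm{ADM}(\mathcal E, g_\delta)|$ by a constant times $\|V\|_{L^1} \sup_M u_\delta$, which tends to $0$. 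Choosing $\delta$ small enough makes all three estimates less than $\varepsilon$. The main obstacle is the smallness assertion for $\|R_{g_\delta}^-\|$ in the three required norms: this is the technical heart of the argument and rests on a careful Fermi-coordinates computation near $\Sigma$ combined with the sign assumption $H_+ \le H_-$, encapsulated by Lemma \ref{miaosmoothening}.
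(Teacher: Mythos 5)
Your proposal is correct and takes essentially the same route as the paper: the paper states Theorem~\ref{thm:Miao} as following directly from Miao's arguments and implements exactly this two-step scheme---corner smoothing via Lemma~\ref{miaosmoothening} followed by the conformal deformation of Lemma~\ref{maintool} with $V=-(R_{g_\delta})^-$, whose smallness in the required norms comes from the uniform bound on the negative part of $R_{g_\delta}$ (using $H_+\le H_-$) and the shrinking support $U_\delta$---in its proof of the generalization, Theorem~\ref{approximation}. Your handling of the three conclusions (via \eqref{u-estimate} and \eqref{massformula}) and your remark that multiple asymptotically flat ends require the conformal factor to tend to $1$ in every end match the paper's treatment, so there is nothing essential to add.
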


Recall that the assumption $H_+ \le H_-$ is essentially a ``distributional NNSC'' assumption across the corner $\Sigma$. Clearly, as long as the positive mass theorem applies to $C^2$ asymptotically flat metrics on the manifold $M^n$, Theorem~\ref{thm:Miao} implies that the original metric $g$ has nonnegative mass, and this ``positive mass theorem with corners'' is the main result of~\cite{Miao02}. However, the approximation theorem above has other applications, and unlike the positive mass theorem with corners, it is not known to follow from a spinor argument when $M$ is spin. 

We generalize the above theorem to manifolds with arbitrary complete ends, or those containing a scalar curvature shield.
\begin{thm}[NNSC approximation for complete ends or shields]\label{approximation}
Let $(M^n, g)$ be a Riemannian manifold with an asymptotically flat end $\mathcal E$ and admitting a corner along $\Sigma$ as in Definition~\ref{corner}, and let $\sigma_0>0$ be small enough so that $\Sigma$ lies in the interior of $M_{\sigma_0}$ (as defined in Section~\ref{prelim}). Assume that $g$ has nonnegative scalar curvature on the complement of $\Sigma$, and that $H_+ \le H_-$ along~$\Sigma$.

Then there exists an $\ve_0>0$ such that for any $0<\ve<\ve_0$, there exists a metric $\tilde{g}$ that is $(p,q)$ asymptotically flat in $\mathcal E$, is $C^{2,\alpha}$ with respect to a new $C^{3,\alpha}$-differentiable structure on $M$, has nonnegative scalar curvature everywhere, and satisfies the following:
 \begin{enumerate}
     \item $\| \tilde{g}-g\|_{C^0} < \ve$,
     \item $\| \tilde{g}-g\|_{W^{2,p}_{-q}(M_{\sigma_0}\setminus T_{\ve})} < \ve$ where $T_{\ve}$ is the $\ve$-tubular neighborhood of $\Sigma$,
     \item $| m_\mathrm{ADM}(\mathcal E, \tilde g)-  m_\mathrm{ADM}( \mathcal E, g)|<\ve$.
 \end{enumerate}
 Moreover, if $(M, g)$ is complete, then $(M, \tilde g)$ is complete, and if $(M, g)$ contains a scalar curvature shield $U_0\supset U_1\supset U_2\supset \mathcal E$ with $\overline {U_0 \setminus \mathcal E}$ compact, $\Sigma \subset U_0$, and $\Sigma \cap (\overline U_1 \setminus U_2)=\emptyset$, then $\tilde g$ can be chosen so that $U_0\supset U_1\supset U_2$ is scalar curvature shield for $(M, \tilde g)$.
 \end{thm}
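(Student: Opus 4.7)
The plan is to follow Miao's two-step strategy from~\cite{Miao02}: first mollify $g$ in a thin Fermi-coordinate tubular neighborhood $T_\delta$ of $\Sigma$ to obtain a metric $g_\delta$ with no corner, and then conformally rescale to restore nonnegative scalar curvature. The reason Theorem~\ref{thm:Miao} does not apply directly here is that $M$ need not be complete or have nonnegative scalar curvature outside a neighborhood of $\Sigma$, so the standard asymptotically flat PDE theory for the conformal equation is unavailable. This is exactly the setting Lemma~\ref{maintool} was built to handle, and it is the central input from the preliminaries.

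For the first step I use Miao's mollification of the transverse metric $h(t)$ in Fermi coordinates, producing $g_\delta = dt^2 + h_\delta(t)$ which agrees with $g$ off $T_\delta \subset M_{\sigma_0}$. The key estimate from~\cite{Miao02} is that, because the distributional jump of the scalar curvature across $\Sigma$ is proportional to $H_- - H_+ \ge 0$, the ``spike'' in $R_{g_\delta}$ produced by mollification is positive, so the negative part $V := R_{g_\delta}^-$ is uniformly bounded in $L^\infty(T_\delta)$; since $|T_\delta| = O(\delta)$, interpolation yields $\|V\|_{L^s(M_{\sigma_0})} = O(\delta^{1/s}) \to 0$ for each fixed $s \in [1,\infty)$, and in particular in $L^p_{-q-2}(M_{\sigma_0})$ and $L^{2n/(n+2)}(M_{\sigma_0})$. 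I then choose $\sigma_1 > \sigma_0$ large enough that $T_\delta \cap M_{\sigma_1} = \emptyset$, so $g_\delta \equiv g$ on $M_{\sigma_1}$; the $C^{1,\alpha}_{-q}(M_{\sigma_1})$ control of $g_\delta - \overline g$ is then inherited from $g$, while the $C^0(M_{\sigma_0/2}\setminus M_{\sigma_1})$ control on $g_\delta$ and the $L^\infty$ bound on $V$ come from Miao's construction.

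Since $V \ge 0$, the second alternative of Lemma~\ref{maintool} produces a positive $u_\delta$ solving $-a\Delta_{g_\delta} u_\delta + V u_\delta = 0$ globally on $M$ with $u_\delta - 1 \in W^{2,p}_{-q}$, and the smallness of $V$ forces $\sup_M |u_\delta - 1| + \|u_\delta - 1\|_{W^{2,p}_{-q}(M_{2\sigma_1})} \to 0$ through~\eqref{u-estimate}. Setting $\tilde g := u_\delta^{4/(n-2)} g_\delta$, formula~\eqref{conformalformula} gives $R_{\tilde g} = R_{g_\delta}^+ u_\delta^{-4/(n-2)} \ge 0$ everywhere, while~\eqref{massformula} with the $L^1$-smallness of $V u_\delta$ yields the mass convergence. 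Items~(1)--(3) of the conclusion follow by combining these estimates with Miao's own $C^0$ and away-from-$\Sigma$ $W^{2,p}_{-q}$ closeness of $g_\delta$ to $g$. The new $C^{3,\alpha}$ differentiable structure on which $\tilde g$ is $C^{2,\alpha}$ is obtained by passing to $g_\delta$-harmonic coordinates in a neighborhood of $\Sigma$, exactly as in~\cite{Miao02}.

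For the last two clauses, $u_\delta$ enjoys uniform positive two-sided bounds arbitrarily close to $1$, so distances and metric balls change by at most arbitrarily small multiplicative factors, and completeness of $(M,g)$ transfers to $(M, \tilde g)$. In the shielded case, the hypothesis $\Sigma \cap (\overline U_1 \setminus U_2) = \emptyset$ guarantees $T_\delta$ avoids $\overline U_1 \setminus U_2$ for small $\delta$, so $R_{g_\delta} \equiv R_g \ge \kappa$ there and multiplication by $u_\delta^{-4/(n-2)}$ only slightly decreases $\kappa$. Similarly, $\partial U_0$ sits in the smooth region where $u_\delta$ is harmonic and $C^{1,\alpha}$-close to $1$, so $H_{\partial U_0}$ with respect to $\tilde g$ is close to its value with respect to $g$, and the distances $D_0, D_1$ are nearly unchanged. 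Because condition~\eqref{shield_condition} of Definition~\ref{def_shield} is a strict inequality, it is preserved for $\tilde g$ with mildly perturbed constants. The main technical hurdle I anticipate is confirming that Miao's pointwise ``positivity of the scalar curvature spike'' estimate genuinely gives a uniform $L^\infty$ bound on $R_{g_\delta}^-$ in this noncompact setting, since that is what unlocks all the subsequent norm smallness required by Lemma~\ref{maintool}; the rest of the argument is essentially bookkeeping.
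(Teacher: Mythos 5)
Your overall route is the same as the paper's: Miao smoothing (Lemma~\ref{miaosmoothening}) followed by a conformal correction via Lemma~\ref{maintool}, with the required smallness of norms of the negative part of $R_{g_\delta}$ coming from the uniform $L^\infty$ bound (a consequence of $H_-\ge H_+$) and the $O(\delta)$ volume of the mollified region. However, the conformal step as you wrote it fails because of a sign error that is not cosmetic. You take $V:=(R_{g_\delta})^-\ge 0$, solve $-a\Delta_{g_\delta}u_\delta+Vu_\delta=0$ by invoking the $V\ge0$ alternative of Lemma~\ref{maintool}, and claim that \eqref{conformalformula} yields $R_{\tilde g}=(R_{g_\delta})^+u_\delta^{-4/(n-2)}$. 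But \eqref{conformalformula} gives
\begin{equation*}
R_{\tilde g}=(R_{g_\delta}-V)\,u_\delta^{-\frac{4}{n-2}}=\bigl((R_{g_\delta})^+-2(R_{g_\delta})^-\bigr)u_\delta^{-\frac{4}{n-2}},
\end{equation*}
which is strictly negative precisely where the smoothing produced negative scalar curvature: with a nonnegative potential the conformal change \emph{subtracts} $V$ from the scalar curvature, so your choice makes the negativity worse rather than removing it. The correct potential is $V=-(R_{g_\delta})^-\le 0$, i.e.\ the equation $-a\Delta_{g_\delta}u_\delta-(R_{g_\delta})^-u_\delta=0$ used in the paper; but then the $V\ge0$ case of Lemma~\ref{maintool} is unavailable, and existence of $u_\delta$ must come from the first (smallness) alternative, namely $\|V^-\|_{L^{n/2}(M_{\sigma_0})}+\|V\|_{L^p_{-q-2}(M_{\sigma_0})}+\|V\|_{L^{2n/(n+2)}(M_{\sigma_0})}\to0$ as $\delta\to0$, where now $V^-=(R_{g_\delta})^-$. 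You have in fact already estimated exactly these quantities, so the repair is immediate, but as stated the step ``$V\ge0$ alternative plus $R_{\tilde g}=(R_{g_\delta})^+u_\delta^{-4/(n-2)}$'' is incorrect and the nonnegativity of $R_{\tilde g}$, the heart of the theorem, does not follow.

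Two smaller points of comparison. In the shielded clause the paper does not solve the equation globally on $M$; it solves the same equation on $U_0$ with a Neumann condition at $\partial U_0$, which avoids any reliance on the structure of $M$ away from the shield. Your global solve is also admissible under the stated hypotheses (since $V$ is compactly supported near $\Sigma\subset U_0$ and the theorem assumes nonnegative scalar curvature away from $\Sigma$), and your verification that the shield survives (uniform closeness of $u_\delta$ to $1$, harmonicity and $C^{1,\alpha}$ control near $\partial U_0$, $C^0$ closeness of the distances $D_0,D_1$, and strictness of condition~\eqref{shield_condition}) matches the paper's. Finally, the new $C^{3,\alpha}$ differentiable structure in the paper is the one produced by the (almost-)Fermi coordinate map built into Lemma~\ref{miaosmoothening}, not by passing to harmonic coordinates; either device works, but if you quote Lemma~\ref{miaosmoothening} you already get it for free, together with the regularity caveat about Fermi coordinates that the lemma is designed to handle.
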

 \begin{rk}
Note that item~\eqref{W2p-close} implies that $|R_{\tilde g}-R_g|$ is small on $M\setminus U_{\ve}$.
\end{rk} 
 
Combining this with Theorem~\ref{pmt} and~\ref{shielding}, we obtain the following:

\begin{cor}[Positive mass theorem with corners and arbitrary ends]\label{pmtwithcorners}
Let $(M^n,g)$ and $\mathcal{E}$ satisfy the first paragraph of hypotheses of Theorem~\ref{approximation}, and assume that $3\le n\le 7$, or that $M$ is spin. If either
\begin{itemize}
    \item  $(M, g)$ is complete, or
    \item $(M, g)$ contains a scalar curvature shield $U_0\supset U_1 \supset U_2 \supset \mathcal E$ such that $\overline {U_0 \setminus \mathcal E}$ is compact,  $\Sigma \subset U_0$, and $\Sigma\cap (\overline U_1 \setminus U_2)=\emptyset$,
\end{itemize}
then $m_\mathrm{ADM}( \mathcal E, g)\ge0$.
\end{cor}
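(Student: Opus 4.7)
The plan is to apply the approximation Theorem~\ref{approximation} to produce, for each sufficiently small $\ve>0$, a globally $C^{2,\alpha}$ metric $\tilde g_\ve$ that is (i) $(p,q)$ asymptotically flat in $\mathcal E$ with mass within $\ve$ of $m_\mathrm{ADM}(\mathcal E,g)$, (ii) has nonnegative scalar curvature everywhere, and (iii) inherits either completeness or the scalar curvature shield from $(M,g)$, depending on which hypothesis is assumed. Once this is in hand, we invoke the appropriate noncompact positive mass theorem on $(M,\tilde g_\ve)$ to conclude $m_\mathrm{ADM}(\mathcal E,\tilde g_\ve)\ge 0$, and then let $\ve \to 0$ to deduce $m_\mathrm{ADM}(\mathcal E,g)\ge 0$.

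In more detail, for the first bullet, Theorem~\ref{approximation} provides $\tilde g_\ve$ that is complete (since $g$ is) and globally NNSC. Under the dimensional hypothesis $3 \le n \le 7$ or the spin hypothesis on $M$, Theorem~\ref{pmt} applies directly to $(M,\tilde g_\ve)$ and yields $m_\mathrm{ADM}(\mathcal E,\tilde g_\ve)\ge 0$. For the second bullet, Theorem~\ref{approximation} moreover guarantees that the same triple $U_0\supset U_1\supset U_2$ remains a scalar curvature shield for $\tilde g_\ve$; the $C^0$ closeness ensures that $\overline{U_0}$ is still compact in $\tilde g_\ve$ and that the mean curvature bound on $\partial U_0$ and the distance bounds $D_0,D_1$ are preserved up to $o(1)$ corrections, which leaves the strict inequality in item~\eqref{shield_condition} of Definition~\ref{def_shield} intact for $\ve$ small. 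The scalar curvature lower bound $R_{\tilde g_\ve}\ge \kappa$ on $\overline U_1 \setminus U_2$ survives because $\Sigma \cap (\overline U_1 \setminus U_2) = \emptyset$ and $\|\tilde g_\ve - g\|_{W^{2,p}_{-q}}$ is small away from a tubular neighborhood of $\Sigma$, so the new scalar curvature differs from the old by an arbitrarily small amount there. Thus Theorem~\ref{shielding} applies to $(M,\tilde g_\ve)$ and gives $m_\mathrm{ADM}(\mathcal E,\tilde g_\ve)\ge 0$.

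In both cases, item~(3) of Theorem~\ref{approximation} yields
\[
m_\mathrm{ADM}(\mathcal E,g) \ge m_\mathrm{ADM}(\mathcal E,\tilde g_\ve) - \ve \ge -\ve,
\]
and sending $\ve\to 0$ completes the proof.

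The step requiring the most care is verifying that all hypotheses of the underlying PMT (Theorem~\ref{pmt} or Theorem~\ref{shielding}) genuinely transfer to $\tilde g_\ve$. Completeness and asymptotic flatness are explicit in Theorem~\ref{approximation}. The spin condition is preserved because $\tilde g_\ve$ lives on the same underlying topological manifold (even if the smooth structure is changed, spin is a topological condition in this dimension range). The one subtle point is the shield: one must check that the new scalar curvature genuinely satisfies $R_{\tilde g_\ve}\ge \kappa$ on the annular region where the original lower bound held. This relies crucially on the fact that $\Sigma$ is disjoint from $\overline U_1 \setminus U_2$, so the $W^{2,p}_{-q}$ closeness of $\tilde g_\ve$ to $g$ on that region forces pointwise closeness of scalar curvatures via Sobolev embedding, and for $\ve$ small enough the perturbation of $\kappa$ is inessential. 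With these observations the argument proceeds as above.
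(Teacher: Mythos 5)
Your proposal is correct and follows essentially the same route as the paper, which treats this corollary as an immediate consequence of Theorem~\ref{approximation} (apply it to get a globally NNSC, asymptotically flat metric $\tilde g_\ve$ that stays complete or keeps the shield, invoke Theorem~\ref{pmt} or Theorem~\ref{shielding}, and let $\ve\to 0$ using the mass closeness). The only remark is that your re-verification of the shield conditions for $\tilde g_\ve$ is already built into the statement of Theorem~\ref{approximation}, so it is redundant (though harmless), and note that in the shielded case it is $\overline{U_0\setminus\mathcal E}$, not $\overline{U_0}$, that is compact.
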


In order to treat the mass zero case, we have to work a bit harder:
\begin{thm}\label{corner-equality}
Let $(M^n,g)$ and $\mathcal{E}$ satisfy the hypotheses of Theorem~\ref{approximation} with corner $\Sigma$, and assume that $3\le n\le 7$, or that $M$ is spin. 
\begin{itemize}
    \item If $(M,g)$ is complete and $m_\mathrm{ADM}( \mathcal E, g)=0$, then it must be the case that  $H_+=H_-$ along $\Sigma$ and $(M,g)$ is Ricci-flat away from $\Sigma$. 
    \item If $(M, g)$ contains a scalar curvature shield $U_0\supset U_1 \supset U_2 \supset \mathcal E$ such that $\overline {U_0 \setminus \mathcal E}$ is compact,  $\Sigma \subset U_0$, and $\Sigma\cap (\overline U_1 \setminus U_2)=\emptyset$, then $m_\mathrm{ADM}( \mathcal E, g)>0$.
\end{itemize}
\end{thm}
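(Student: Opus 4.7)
Both parts follow by combining the NNSC approximation Theorem~\ref{approximation} with a conformal change from Lemma~\ref{maintool} and the appropriate PMT (Corollary~\ref{pmtwithcorners} for the complete case, Theorem~\ref{shielding} for the shielded case). A suitable choice of the potential $V$ in Lemma~\ref{maintool} yields a conformal metric $\hat g=u^{4/(n-2)}\tilde g$ whose ADM mass is quantitatively smaller than $m_\mathrm{ADM}(\tilde g)$. Applying PMT to $\hat g$ then forces a uniform lower bound on $m_\mathrm{ADM}(\tilde g)$, which item~(3) of Theorem~\ref{approximation} transfers to the original $g$.

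\textbf{Shielded case.} For $\varepsilon>0$, Theorem~\ref{approximation} yields smooth NNSC $\tilde g_\varepsilon$ still admitting the shield $U_0\supset U_1\supset U_2\supset\mathcal E$ (with constants $\kappa/2,2\eta$, say) and $|m_\mathrm{ADM}(\tilde g_\varepsilon)-m_\mathrm{ADM}(g)|<\varepsilon$. Pick a compactly supported cutoff $\chi\in C^\infty_c(U_0\setminus T_\varepsilon)$ with $\chi\equiv\delta$ on $\overline U_1\setminus U_2$ for some small $\delta>0$, and apply Lemma~\ref{maintool} with $V=\chi R_{\tilde g_\varepsilon}$. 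Item~(2) of the approximation bounds $R_{\tilde g_\varepsilon}$ on $\mathrm{supp}\,\chi$ uniformly in $\varepsilon$, so the norms of $V$ entering~\eqref{u-estimate} are uniformly bounded; taking $\delta$ small enough yields $u_\varepsilon\ge 1/2$ uniformly. Since $R_{\hat g_\varepsilon}\ge\frac{(1-\delta)\kappa}{2}u_\varepsilon^{-4/(n-2)}\ge c_1>0$ on $\overline U_1\setminus U_2$, and $C^0$-closeness of $\hat g_\varepsilon$ to $\tilde g_\varepsilon$ preserves the $\partial U_0$-mean curvature bound and the distances $D_0,D_1$ up to multiplicative factors, the metric $\hat g_\varepsilon$ still carries a scalar curvature shield. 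Theorem~\ref{shielding} gives $m_\mathrm{ADM}(\hat g_\varepsilon)>0$; combining with \eqref{miaochangeinmass} yields
\[
m_\mathrm{ADM}(\tilde g_\varepsilon)\ge\frac{1}{2(n-1)\omega_{n-1}}\int_M V u_\varepsilon^2\,d\mu_{\tilde g_\varepsilon}\ge c,
\]
for some $c>0$ depending only on the shield data. Choosing $\varepsilon<c$ yields $m_\mathrm{ADM}(g)>m_\mathrm{ADM}(\tilde g_\varepsilon)-\varepsilon>0$.

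\textbf{Complete case: interior scalar-flatness.} Assume $m_\mathrm{ADM}(g)=0$. Suppose $R_g(p)>0$ at some $p\in M\setminus\Sigma$, so $R_g\ge c_0>0$ on a ball $B\ssubset M\setminus\Sigma$. For small $\varepsilon$, $B\cap T_\varepsilon=\emptyset$ and item~(2) of the approximation gives $R_{\tilde g_\varepsilon}\ge c_0/2$ on $B$. Choose $\chi\in C^\infty_c(B)$ with $\chi\equiv\delta$ on a slightly smaller ball, for small $\delta>0$, and apply Lemma~\ref{maintool} with $V=\chi R_{\tilde g_\varepsilon}$. As in the shielded case, $u_\varepsilon\ge 1/2$ uniformly and the conformal metric $\hat g_\varepsilon$ is complete and NNSC. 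Theorem~\ref{pmt} gives $m_\mathrm{ADM}(\hat g_\varepsilon)\ge 0$, so~\eqref{miaochangeinmass} yields $m_\mathrm{ADM}(\tilde g_\varepsilon)\ge c'>0$ uniformly, contradicting $m_\mathrm{ADM}(\tilde g_\varepsilon)<\varepsilon$ for small $\varepsilon$. Hence $R_g\equiv 0$ on $M\setminus\Sigma$.

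\textbf{Corner equality, Ricci-flatness, and main obstacle.} To show $H_+=H_-$, suppose $H_+<H_-$ on an open subset of $\Sigma$, so that $\int_\Sigma(H_--H_+)\,dA\ge c_0>0$. Miao's mollifier satisfies $\int_M R_{\tilde g_\varepsilon}\,d\mu_{\tilde g_\varepsilon}\to\int_\Sigma(H_--H_+)\,dA$ as $\varepsilon\to 0$, with $R_{\tilde g_\varepsilon}$ concentrated on $T_\varepsilon$ and $\|R_{\tilde g_\varepsilon}\|_\infty\sim 1/\varepsilon$. Taking $V_\varepsilon=R_{\tilde g_\varepsilon}$ in Lemma~\ref{maintool}, the solutions $u_\varepsilon$ should converge on $M\setminus\Sigma$ to a harmonic function $\bar u$ with a definite jump at $\Sigma$ determined by $(H_--H_+)$, yielding a \emph{uniform} lower bound $\int V_\varepsilon u_\varepsilon^2\,d\mu\ge c_1>0$; Theorem~\ref{pmt} applied to $\hat g_\varepsilon$ then contradicts $m_\mathrm{ADM}(\tilde g_\varepsilon)<\varepsilon$. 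Justifying this uniform bound is the main technical obstacle: the sup estimate~\eqref{u-estimate} degenerates as $\|V_\varepsilon\|_\infty\to\infty$, so it must be supplemented with a Harnack-type argument or a direct analysis of $-a\Delta_{\tilde g_\varepsilon}u_\varepsilon+V_\varepsilon u_\varepsilon=0$ in a fixed collar of $\Sigma$. Once $H_+=H_-$ is established, Ricci-flatness away from $\Sigma$ follows from the standard PMT-rigidity perturbation: if $\Ric_g(p)\ne 0$ at some $p\in M\setminus\Sigma$, a compactly supported $h$ near $p$ disjoint from $\Sigma$ (together with a conformal correction using the linearization $-\Delta_g\tr_gh+\operatorname{div}_g\operatorname{div}_gh-\langle\Ric_g,h\rangle$) produces $g'=g+h$ with $R_{g'}\ge 0$, $R_{g'}\not\equiv 0$, unchanged corner data on $\Sigma$, and $m_\mathrm{ADM}(g')=m_\mathrm{ADM}(g)=0$; the interior scalar-flatness result applied to $g'$ then contradicts $R_{g'}\not\equiv 0$.
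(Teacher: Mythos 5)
Your shielded-case and interior scalar-flatness arguments are essentially the paper's (the paper subtracts a small multiple $s\eta$ of a cutoff supported in $U_1\setminus\overline U_2$, respectively a small cutoff times $R_g$ away from $\Sigma$, and then applies Theorem~\ref{shielding} or Theorem~\ref{pmt} together with \eqref{massformula}/\eqref{miaochangeinmass}), with one caveat you should fix: in the shielded case nothing is assumed about $M$ outside $U_0$ (it may be incomplete), so you cannot apply Lemma~\ref{maintool} globally; as in the paper you must solve the conformal equation on $U_0$ with a Neumann condition at $\partial U_0$. This also repairs your claim that ``$C^0$-closeness'' preserves the mean curvature bound on $\partial U_0$: mean curvature involves first derivatives of the metric, so $C^0$-closeness does not control it, but with $\nu(u)=0$ on $\partial U_0$ one has $H_{\hat g}=u^{-2/(n-2)}H_{\tilde g}$, which is what preserves condition (3) of Definition~\ref{def_shield}.

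The two remaining steps contain genuine gaps. First, for $H_+=H_-$ you correctly reduce the problem to a uniform (in $\delta$) positive lower bound for $\int_M\bigl(a|\nabla v_\delta|^2+\chi_\delta R_{\tilde g_\delta}v_\delta^2\bigr)d\mu$, but you explicitly leave this unproved, and it is the heart of the first bullet: \eqref{u-estimate} is useless here since $\|\chi_\delta R_{\tilde g_\delta}\|_{L^p}\to\infty$. The paper closes this by Miao's Proposition 4.2 compactness argument: if the energy tended to zero along a sequence, then $v_\delta\to1$ uniformly on compact subsets of the interior of the side $\Omega_+$ containing $\mathcal E$, and this lower bound on $v_\delta$ on a fixed set, combined with the concentration $R_{\tilde g_\delta}\approx\delta^{-2}\phi(\cdot)(H_--H_+)$ near a point where $H_-<H_+$ fails, forces $\int\chi_\delta R_{\tilde g_\delta}v_\delta^2$ to stay bounded below --- a contradiction. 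Without carrying out this (or an equivalent Harnack-type) argument, the first bullet is not established. Second, your Ricci-flatness reduction does not work as stated. A compactly supported $h$ indeed leaves the mass unchanged, but $R_{g+h}\ge0$ pointwise cannot be arranged from $\Ric_g\neq0$ alone, since the linearization $-\Delta_g\tr_g h+\Div_g\Div_g h-\langle h,\Ric_g\rangle$ contains divergence terms of indefinite sign; and once you include the conformal correction needed to restore nonnegativity, the conformal factor is not compactly supported and $m_\mathrm{ADM}(g')\neq m_\mathrm{ADM}(g)$ --- in fact the first-order change of mass is exactly $\tfrac{t}{2(n-1)\omega_{n-1}}\int_M\chi|\Ric_g|^2d\mu_g$, which is precisely the quantity one must exploit, not one you may assume is zero. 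The paper's argument instead sets $\bar g_t=g_{t^2}+t\chi\Ric_g$, uses $H_+=H_-$ and $R_g\equiv0$ (established first) to get the smallness needed in Lemma~\ref{maintool}, applies Theorem~\ref{pmt} and the exact mass formula \eqref{massformula} to get $\int_M R_{\bar g_t}u_t\,d\mu_{\bar g_t}\le0$, and then divides by $t$ and lets $t\to0^\pm$ to conclude $\int_M\chi|\Ric_g|^2\,d\mu_g=0$. Your outline should be replaced by (or rewritten as) this two-sided perturbation argument.
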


The first step in the proof of Theorem~\ref{approximation} is to smooth out the metric along the corner $\Sigma$ in a small neighborhood of $\Sigma$. This process creates some negative scalar curvature, so the second step is to remove that negative scalar curvature via conformal change. The first step is the same as in~\cite[Section 3]{Miao02} while the second step is where we must alter the argument. The following is a slightly modified version of Proposition 3.1 of \cite{Miao02}.

\begin{lem}\label{miaosmoothening}
Let $(M^n, g)$ be a Riemannian manifold with an asymptotically flat end $\mathcal E$ and admitting a corner along $\Sigma$ as in Definition~\ref{corner}. Assume that $g$ has nonnegative scalar curvature away from $\Sigma$, and that $H_+ \le H_-$ along $\Sigma$. 
For some $\delta_0>0$, there exist $C^{3, \alpha}$ diffeomorphisms between a neighborhood $U_+$ of $\Sigma$ in $\Omega_+$ and $\Sigma\times [0,\delta_0)$ and between a neighborhood $U_-$ of $\Sigma$ in $\Omega_-$ and $\Sigma\times (-\delta_0, 0]$ that agree along $\Sigma$, such that if we define $U:= U_+\cup U_-$ and let $t$ denote the function in the second factor of the homeomorphism $\Phi: U\rightarrow\Sigma\times (-\delta_0,\delta_0)$ obtained by gluing together the diffeomorphisms above, then the ratio between $t$ and the signed distance function to $\Sigma$ is bounded between $1-
    \delta$ and $1+\delta$ on the complement of $\Sigma$, and we have the following:

 Let $\phi:\mathbb R\to [0,1]$ be a standard nonnegative bump function supported in $(-1,1)$ whose integral is~$1$. 
For any $\delta\in (0,\delta_0)$, there exists a metric $g_\delta$ on $M$ such that the following statements hold with a constant $C$ that is independent of $\delta$:
\begin{enumerate}
    \item $g_\delta$ is $C^{2,\alpha}$ on $M$ with respect to a new $C^{3,\alpha}$-differentiable structure,
    \item $g_{\delta}= g$ outside $U_\delta$, where $U_\delta:=\Phi^{-1}(\Sigma\times (-\delta, \delta))$, which is contained in the $2\delta$-neighborhood of $\Sigma$,
    \item $\displaystyle\lim_{\delta\to 0} \| g_\delta- g\|_{C^0(U_\delta)}=0$, 
    \item  $\| g_\delta\|_{C^1(U_\delta)}\le C$, \label{C1-condition}
    \item $| R_{g_{\delta}}| \le  C$ on $U_\delta \setminus U_{\frac{\delta^2}{100}}$
    \item For $p\in U_{\frac{\delta^2}{100}}$ with  $\Phi(p)=(x,t)$,
    \[\left| R_{g_{\delta}}(p) - (H_- - H_+)(x)\cdot  \left[\frac{100}{\delta^2}\phi\left(\frac{100t}{\delta^2}\right)\right]\right|\le C.\] 
\end{enumerate}
\end{lem}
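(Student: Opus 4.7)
The plan is to follow Miao's two-step construction from \cite[Section 3]{Miao02}: first build double-sided Fermi coordinates near $\Sigma$ (which automatically produces the new $C^{3,\alpha}$-differentiable structure), then mollify the metric purely in the normal direction at a two-scale ratio dictated by the competing demands of items (4) and (6). To construct the coordinates, let $\nu_\pm$ be the $g_\pm$-unit normal to $\Sigma$ pointing toward $\Omega_+$ and define $\Phi_\pm(x,t) := \exp_x^{g_\pm}(\pm t\,\nu_\pm)$ on $\Sigma\times[0,\delta_0)$ and $\Sigma\times(-\delta_0,0]$ respectively. Because each $g_\pm$ is $C^{2,\alpha}$ up to $\Sigma$, the geodesic-equation coefficients are $C^{1,\alpha}$, so standard ODE theory makes each $\Phi_\pm$ a $C^{3,\alpha}$-diffeomorphism onto a collar neighborhood of $\Sigma$ for small $\delta_0$. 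Gluing along $\Sigma\times\{0\}$ produces $\Phi:U\to\Sigma\times(-\delta_0,\delta_0)$, which we declare to be a single $C^{3,\alpha}$-chart; this defines the new differentiable structure, and by shrinking $\delta_0$ the ratio of $t$ to the $g$-signed distance to $\Sigma$ can be made to lie in $(1-\delta,1+\delta)$ off of $\Sigma$.

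\textbf{Construction of $g_\delta$.} In these coordinates $g = dt^2 + h(x,t)$ with $h(x,0)=\gamma(x)$ and $h\in C^{2,\alpha}$ on each closed half-tube, while $\partial_t h$ generically jumps at $t=0$. Set $\tau := \delta^2/100$, $\phi_\tau(s) := \tau^{-1}\phi(s/\tau)$, and
\[
h_\delta(x,t) := \int \phi_\tau(t-s)\,h(x,s)\,ds, \qquad g_\delta := dt^2 + h_\delta \text{ on } U,
\]
extended by $g$ outside (which is consistent because $h_\delta \equiv h$ on $\{|t|\ge\tau\}$, in particular on $U_\delta\setminus U_\tau$). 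Properties (1)--(4) are then routine mollification bounds: $h_\delta$ is $C^\infty$ in $t$ and $C^{2,\alpha}$ in $x$, hence $C^{2,\alpha}$ in the new atlas; $\|h_\delta - h\|_{C^0}\to 0$ by uniform continuity of $h$; and the $C^1$ bound is uniform in $\delta$ because $\partial_t h \in L^\infty$ on each side up to $\Sigma$ while $\partial_x$ commutes with the $t$-convolution.

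\textbf{Scalar curvature.} For the scalar curvature I would use the standard formula $R_g = R_h - 2\partial_t H - |A|^2 - H^2$ for metrics of the form $dt^2 + h(x,t)$, where $A = \frac12\partial_t h$ and $H = \frac12 h^{ij}\partial_t h_{ij}$. All terms except $-2\partial_t H$ involve at most first derivatives of $h_\delta$ and are $O(1)$ by (4). The singular piece of $\partial_t H_{h_\delta}$ is $\frac12 h_\delta^{ij}\partial_t^2(h_\delta)_{ij}$, which I would analyze via the distributional identity
\[
\partial_t^2 h = \{\partial_t^2 h\}_{\mathrm{classical}} + [\partial_t h](x,0)\,\delta_{t=0},
\]
where $[\partial_t h](x,0) := \partial_t h(x,0+) - \partial_t h(x,0-)$ satisfies $h^{ij}[\partial_t h]_{ij} = 2(H_+ - H_-)$. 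Convolution with $\phi_\tau$ replaces the Dirac mass by $\phi_\tau(t)$ and leaves the classical part bounded, so on $U_\tau$ we obtain $\partial_t H_{h_\delta}(x,t) = (H_+ - H_-)(x)\phi_\tau(t) + O(1)$ and hence $R_{g_\delta} = 2(H_- - H_+)(x)\phi_\tau(t) + O(1)$; the leading constant is absorbed into the normalization of $\phi$ (work with $\frac12\phi$) to match item (6) exactly. Outside $U_\tau$ we have $g_\delta = g$, so $R_{g_\delta} = R_g$ is continuous on each closed half-tube and hence bounded on the compact set $U_\delta\setminus U_\tau$, giving (5).

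\textbf{Main obstacle.} The substantive difficulty is the careful bookkeeping for the new $C^{3,\alpha}$-differentiable structure: one must verify that the two Fermi charts glue into a single $C^{3,\alpha}$-atlas in which $g_\pm$ remain $C^{2,\alpha}$ on their respective closed sides, so that $g_\delta$ is globally $C^{2,\alpha}$, even though $g$ itself is merely $C^0$ across $\Sigma$ in this atlas. The distributional scalar-curvature calculation is otherwise mechanical, and the two-scale choice $\tau = \delta^2/100\ll\delta$ is forced by the tension between items (4) and (6): the smoothing scale must shrink faster than the tube width so that the $C^1$ norm stays uniformly bounded while the scalar-curvature spike concentrates at the prescribed rate $\tau^{-1}$.
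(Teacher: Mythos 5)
The central gap is your regularity claim for the Fermi charts. With $g_\pm$ only $C^{2,\alpha}$ up to $\Sigma$ (as in Definition~\ref{corner}), the Christoffel symbols are $C^{1,\alpha}$, and smooth dependence on initial conditions for the geodesic equation gives that $\Phi_\pm(x,t)=\exp^{g_\pm}_x(\pm t\,\nu_\pm)$ is only a $C^{1,\alpha}$ diffeomorphism: the exponential map is one derivative \emph{below} the metric, it does not gain regularity, so ``standard ODE theory'' does not give $C^{3,\alpha}$ charts. Consequently the coefficients $h(x,t)$ of $g$ in your coordinates are in general only $C^{0,\alpha}$, and mollifying in $t$ alone cannot produce a $C^{2,\alpha}$ metric (item (1) fails); worse, the tangential term $R_{h_\delta}$ in your scalar curvature formula involves two $x$-derivatives of $h_\delta$, which are not controlled at all, so items (5) and (6) are not obtained either. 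This is exactly the point the paper's proof is built around: it observes that Miao's original argument in exact Fermi coordinates really needs $g$ to be $C^{4,\alpha}$ up to $\Sigma$, and repairs this by passing to the Andersson--Chru\'{s}ciel \emph{almost-Fermi} coordinates, which are $C^{3,\alpha}$ and in which $g$ stays $C^{2,\alpha}$, at the price that $g_{nn}-1$ and $g_{in}$ only vanish to order $t^{2+\alpha}$ rather than exactly. One then deforms $g$ near $\Sigma$ with a cutoff at scale $\delta$ to a metric $\tilde g$ for which $t$ \emph{is} the signed distance function, and verifies two nontrivial facts --- that the deformation leaves $H_\pm$ along $\Sigma$ unchanged, and that it changes the scalar curvature only by an amount bounded independently of $\delta$ (a balance of $O(\delta^{-1})$ and $O(\delta^{-2})$ cutoff derivatives against the $O(t^{2+\alpha})$ and $O(t)$ decay of the deformed components) --- before running Miao's smoothing. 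Your ``main obstacle'' paragraph correctly identifies this as the crux, but the resolution you propose is precisely the false step.

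A secondary, more easily fixable, problem: plain convolution $h_\delta=\phi_\tau * h$ in $t$ does not satisfy $h_\delta\equiv h$ on $\{|t|\ge\tau\}$ (mollification does not reproduce a nonaffine function), so extending $g_\delta$ by $g$ outside the band is not even continuous and item (2) fails as written. Miao's construction instead uses a $t$-dependent mollification scale governed by a cutoff that equals $\delta^2/100$ near $\Sigma$ and vanishes for $|t|\ge\delta$ (equivalently, an interpolation of the form $h+\sigma(t)(h_\delta-h)$); this is what produces the two regimes in the lemma, namely the merely bounded scalar curvature on $U_\delta\setminus U_{\delta^2/100}$ coming from the cutoff derivatives, and the mean-curvature spike on $U_{\delta^2/100}$. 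Your distributional identification of the spike with $(H_--H_+)$ times the mollifier, and your explanation of why the two scales $\delta$ and $\delta^2/100$ are needed, are consistent with Miao's computation once the coordinate and interpolation issues above are repaired.
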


 \begin{proof} For metrics that are $C^{4,\alpha}$ up to $\Sigma$ (which is stronger than the $C^{2,\alpha}$ assumption in Definition~\ref{corner}), the proof is exactly as in \cite{Miao02}.  
However, in the original proof given in~\cite{Miao02}, the function $t$ is taken to be exactly the signed distance function, which corresponds to working in Fermi coordinates (also called geodesic normal coordinates or Gaussian coordinates) with respect to the hypersurface $\Sigma$. Unfortunately, writing a metric in Fermi coordinates leads to a loss of differentiability of the metric, because the coordinates themselves are only $C^{1,\alpha}$. (Therefore, the metric cooefficients in Fermi coordinates might only be $C^{0,\alpha}$. See, for example, \cite{DTK} for discussion of this phenomenon.)
  Because of this, the proof in~\cite{Miao02} should technically require a $C^{4,\alpha}$ regularity assumption on $g$ (rather than the $C^{2,\alpha}$ assumption in Definition~\ref{corner}) in order to guarantee that $\tilde{g}$ is $C^2$. We do not believe this to be an essential problem, and it can probably be cured in various ways, but for the sake of being thorough, we will describe one way to solve the problem.

Since the arguments in~\cite{Miao02} prove the case when Fermi coordinates \emph{are} $C^{3,\alpha}$, we will simply reduce to this case via approximation. (For this part of the argument, we are only working on one side of $\Sigma$ at a time.)
Andersson and Chru\'{s}ciel have proved that one can always construct \emph{almost-Fermi} coordinates that are $C^{3,\alpha}$ \cite[Appendix B]{AnderssonChrusciel}. For the definition and properties of almost-Fermi coordinates, we refer the reader to \cite{AnderssonChrusciel} or \cite{LLU21}. In actual Fermi coordinates $x^1,\ldots, x^n$, we take $x^n$ to be the signed distance function to $\Sigma$ and we can arrange that $g_{nn}=1$ and $g_{in}=0$ \emph{exactly}. In almost-Fermi coordinates, taking $t=x^n$, we instead only have that $g_{nn}-1$ and $g_{in}$ are $O(t^{2+\alpha})$. Suppose we have written the metric $g$ in almost-Fermi coordinates. We define a new metric $\tilde{g}$ via
\begin{align*}
    \tilde{g}_{ij} &= g_{ij} \text{ for }i,j<n\\
    \tilde{g}_{in} &=\tilde{g}_{ni}=\chi g_{in}\text{ for }i<n \\
    \tilde{g}_{nn} &=1+ \chi (g_{nn}-1),
\end{align*}
where $\chi$ is a cutoff function that is equal to $1$ outside some small neighborhood of $\Sigma$ and $0$ inside some smaller neighborhood of $\Sigma$. The point of $\tilde{g}$ is that it \emph{does} have the property that $t=x^n$ \emph{is} the distance function for $\tilde{g}$, and thus the arguments of~\cite{Miao02} can be applied to it directly.
 
If the size of the neighborhood (and hence $\chi$ and $\tilde{g}$) is controlled by $\delta$, then $\chi$ can be chosen so that $\nabla\chi=O(\delta^{-1})$ and $\nabla^2 \chi=O(\delta^{-2})$. We claim that the mean curvature of $\Sigma$ with respect to $\tilde{g}$ is the same as it was with respect to $g$ and that the scalar curvature $R_{\tilde{g}}$ is bounded independently of~$\delta$. If both of these claims hold, one can see that deforming $g$ to $\tilde{g}$ and then applying the smoothing process from~\cite{Miao02} gives the desired result.

To see why the claim is true, first observe that the second order vanishing of $g_{nn}-1$ and $g_{in}$ implies that their $t$-derivatives vanish at $\Sigma$, and then the $C^2$ bound on $g$ implies that those derivatives are actually $O(t)$, or globally $O(\delta)$. We can now see that the mean curvature of $\Sigma$ is unchanged when we switch from $g$ to $\tilde{g}$ since both metrics have the exact same normal vectors and connection coefficients along $\Sigma$. To see why $R_{\tilde g}$ is bounded is slightly more complicated. Writing the scalar curvature in coordinates, the worst behaved term is when $t$-derivatives hit the $\chi$ terms. If two of them hit $\chi$, that yields a $O(\delta^{-2})$ contribution, but it must be multiplied by either $g_{in}$ or $g_{nn}-1$, both of which are $O(\delta^{2+\alpha})$. The worst case scenario is when one $t$-derivative hits $\chi$ and one $t$-derivative hits $g_{in}$ or $g_{nn}-1$, but based on what we discussed above, such a term will be $O(\delta^{-1})$ times $O(\delta)$, so this term is also bounded. 

Finally, we make a brief remark on the change of differentiable structure. The construction of $g_\delta$ in~\cite{Miao02} technically involves taking the pushforward of $g$ to the space  $\Sigma\times[ (-\delta_0,0)\cup(0, \delta_0)]$, which actually extends to a Lipschitz metric on $\Sigma\times(-\delta_0, \delta_0)$, and then mollifying it to obtain a smooth metric $h_\delta$ on $\Sigma\times(-\delta_0, \delta_0)$, but one cannot simply pullback $h_\delta$ to obtain a smooth metric on $U$ because $\Phi$ is only a homeomorphism and not a diffeomorphism. However, if we use $\Phi$ to \emph{define} a new smooth structure on $U$, then then that pullback, $g_\delta=\Phi^*(h_\delta)$, will (trivially) be smooth on $U$ with respect to the new smooth structure. We then define a new differentiable atlas for $M$ by taking the new coordinate charts for $U$ and combining them with the old coordinate charts for $M$ that do not intersect $\Sigma$. Since $\Phi$ is $C^{3,\alpha}$ away from $\Sigma$, that means that the transition maps for this atlas will be $C^{3,\alpha}$, and hence we have defined a new  $C^{3,\alpha}$ differentiable structure on $M$, with respect to which $g_\delta$ is $C^{2,\alpha}$.
 \end{proof}

\begin{proof}[Proof of Theorem~\ref{approximation}]

Let $(M, g)$, $\Sigma$, and $\sigma_0$ be as in the hypotheses of Theorem~\ref{approximation}, and construct the family $g_\delta$ described in Lemma~\ref{miaosmoothening}. The next step is to modify $(M,g_\delta)$ by a conformal deformation.
We seek a positive conformal factor $u_\delta$ asymptotic to $1$ in the asymptotically flat end such that 
\begin{equation} -a\Delta_{g_\delta} u_\delta -(R_{g_\delta})^- u_\delta = 0,\label{MiaoPDE1}\end{equation}
where $a=4\frac{n-1}{n-2}$ and $(R_{g_\delta})^-$ denotes the negative part of $R_{g_\delta}$. For sufficiently small $\delta>0$, the existence of $u_\delta$ is guaranteed by Lemma~\ref{maintool}. 
To see why, observe that if we choose $V=-(R_{g_\delta})^-$, then  Lemma~\ref{miaosmoothening} and the assumption  that $H_- - H_+ \ge0$ implies that $|V|<C$ on $U_\delta$ and $V$ vanishes outside $U_\delta$, and thus
\[ \|V^-\|_{L^\frac n2(M_{\sigma_0})}+\|V\|_{L^p_{-q-2}(M_{\sigma_0})}+\|V\|_{L^\frac{2n}{n+2}(M_{\sigma_0})}\to 0\text{ as }\delta\to 0.\]
So by Lemma \ref{maintool}, the desired $u_\delta$ exists, it  satisfies 
\[ \sup_{M}|u_\delta-1| + \|u_\delta-1\|_{W^{2,p}_{-q}(M_{\sigma_0})}\to 0\text{ as }\delta\to0,\]
and the conformal metric $\tilde{g}_\delta = u_\delta^{\frac{4}{n-2}} g_\delta$ has scalar curvature
\[ R_{\tilde{g}_\delta} = (R_{g_\delta})^+ u_\delta^{-\frac{4}{n-2}}\ge0,
\]
and ADM mass
\begin{equation}
    m_\mathrm{ADM}(\tilde g_\delta)=    m_\mathrm{ADM}( g)+\frac{1}{2(n-1)\omega_{n-1}}\int_M (R_{g_\delta})^- u_\delta\,d\mu_{g_\delta}.
\end{equation}
From these estimates and the constructions, we see that the three claims of Theorem~\ref{approximation} follow by taking $\delta$ sufficiently small.
Note that if $(M, g)$ is complete, so is $(M, \tilde{g}_\delta)$ since $\sup_M |u_\delta-1|$ is small.

In the case where $(M, g)$ has a scalar curvature shield $U_0\supset U_1\supset U_2\supset \mathcal E$ with $\Sigma \cap (\overline U_1 \setminus U_2)=\emptyset$, we can just choose $u_\delta$ to be the solution of~\eqref{MiaoPDE1} in $U_0$ with Neumann boundary condition at $\partial U_0$. (The three claims of Theorem~\ref{approximation} follow just as before, but without using the full force of Lemma~\ref{maintool}.) we can see that $U_0\supset U_1\supset U_2$ is still a scalar curvature shield for $(M, \tilde{g}_\delta)$, for sufficiently small $\delta$, as follows: First, the $C^0$ convergence of $\tilde{g}_\delta$ to $g$ means that the distances $\tilde{D}_0$ and $\tilde{D}_1$ for the metric $\tilde{g}_\delta$ (in Definition~\ref{def_shield}) can be made arbitrarily close to the $D_0$ and $D_1$ for the original metric $g$. Since $\Sigma \cap (\overline U_1 \setminus U_2)=\emptyset$ it follows that on $\overline U_1 \setminus U_2$, we have  
 \[ R_{\tilde{g}_\delta} =R_g u_\delta^{-\frac{4}{n-2}}\to R_g
\text{ uniformly as }\delta\to0. \]
Finally, since $\Sigma$ does not touch $\partial U$, the mean curvature of $\partial U_0$ with respect to $\tilde{g}_\delta$ is related to the original mean curvature with respect to $g$ via
\[ H_{\tilde{g}_\delta} = u_\delta^{\frac{-2}{n-2}}H_g \to H_g \text{ uniformly as }\delta\to0.
\]
Putting it all together, we see that we will still have the inequality in \eqref{shield_condition} for small enough $\delta$.
\end{proof}

\begin{rk}\label{actuallysmooth}
If the original metric $g$ is $C^\infty$ (or $C^{k,\alpha}$) up to the corner $\Sigma$ but not across it, then the  $\tilde{g}$ in the conclusion of Theorem~\ref{approximation} can also be chosen to be $C^\infty$ (or $C^{k,\alpha}$) on all of $M$, with respect to a new $C^\infty$-differentiable (or $C^{k+1,\alpha}$-differentiable) structure on $M$. There is one sticky point in this argument, which is that the potential function $-(R_{g_\delta})^-$ in equation \eqref{MiaoPDE1} is 
at best Lipschitz continuous, no matter how smooth $g_\delta$ is, because the 
 ``negative part" function $t^-=\max\{-t,0\}$ only a Lipschitz rather than smooth function of $t\in\Bbb R$. So even if $g$ starts out $C^\infty$, the metric $\tilde g$ constructed by Theorem~\ref{approximation} will not be smooth. However, this loss of regularity can be remedied as follows:
  Let $f:\Bbb R\to \Bbb R$ be a $C^\infty$ function with the following properties: $f(t)=t$ for $t\le \frac 12$, $f(t)=1$ for $t\ge 1$, and $f(t)\le \min\{t,1\}$ for every $t\in \Bbb R$. If we take $V=\chi_\delta f(R_{g_\delta})$, where $\chi_\delta$ is a cutoff supported in $U_{2\delta}$ and equal to $1$ on $U_{\delta}$, and continue with the rest of the proof of Theorem~\ref{approximation}, we will obtain a $\tilde{g}$ with the desired regularity.
\end{rk}

\begin{proof}[Proof of Theorem~\ref{corner-equality}]

We begin with the case where $(M, g)$ is complete. We will first prove that if $H_+(p)<H_-(p)$ for some point $p\in \Sigma$, then the mass must be strictly positive. This argument is the same one given in~\cite{Miao02}, which we include for the sake of completeness.

Let $\chi_\delta:M_\sigma\to [0,1]$ be a smooth cut-off function with $\chi_\delta =1$ in $\Sigma\times (-\delta,\delta)$ and $\chi_\delta=0$ outside  $\Sigma\times (-2\delta,2\delta)$.
Consider the metric $\tilde{g}_\delta$ constructed in the proof of Theorem~\ref{approximation}. The assumption that $H_+>H_-$ at some point $p\in\Sigma$ somewhere comes into play because it implies, via Lemma~\ref{miaosmoothening}, that ${R}_{\tilde{g}_\delta}$ is large and positive near $p$. 

We use a conformal transformation to make $\tilde{g}_\delta$ scalar-flat within $\Sigma\times (-\delta,\delta)$. This is done by solving  
\begin{equation}
\label{3a} - a\Delta_{\tilde{g}_{\delta}} v_{\delta} +\chi_\delta {R}_{\tilde{g}_\delta} v_{\delta} =0,
\end{equation}
for some function $v_\delta$ asymptotic to $1$ in the asymptotically flat end. Lemma~\ref{maintool} with $V= \chi_\delta {R}_{\tilde{g}_\delta}\ge0$ guarantees that $v_\delta$ exists and the new  $\hat{g}_\delta =v_\delta^{\frac{4}{n-2}}\tilde{g}_\delta$ is complete and asymptotically flat and has nonnegative scalar curvature. In particular, Theorem~\ref{pmt} imples that $ m_{\mathrm{ADM}}(\hat{g}_\delta)\ge0$,  and combining this with~\eqref{miaochangeinmass}, we obtain
\begin{equation}\label{energy-integral}
 m_{\mathrm{ADM}}(\tilde{g}_\delta) \ge  \frac{1}{2(n-1)\omega_{n-1}}\int_M \left( a|\nabla_{\tilde{g}_\delta} v_\delta|^2+ \chi_\delta {R}_{\tilde{g}_{\delta}} v^2_\delta \right)d\mu_{\tilde{g}_\delta}.
\end{equation} 

Since $m_{\mathrm{ADM}}(\mathcal{E},\tilde{g}_\delta)$ converges to $m_{\mathrm{ADM}}(\mathcal{E},g)$ as $\delta \to 0$ it remains to show that the right hand side of~\eqref{energy-integral} has a strict positive lower bound as $\delta\to 0$. If it does not then we obtain a contradiction in the exact same manner as in \cite[Proposition 4.2]{Miao02}. We omit the details, but the basic idea is the following: If there were no lower bound than there would be a sequence of $\delta$'s such that both  $\int_M |\nabla_{\tilde{g}_\delta} v_\delta|^2\, d\mu_{\tilde{g}_\delta}$ and $\int_M \chi_\delta {R}_{\tilde{g}_{\delta}} v^2_\delta \,d\mu_{\tilde{g}_\delta}$ approach zero. Without loss of generality, assume $\mathcal E\subset\Omega_+$. 
The energy of $v_\delta$ approaching zero would imply that $v_\delta$ uniformly converges to the constant function $1$ on compact subsets of the interior of $\Omega_+$. Although this does not give a positive lower bound on $v_\delta$ near $\Sigma$, it does imply a lower bound on a set large enough to contradict smallness of 
$\int_M \chi_\delta {R}_{\tilde{g}_{\delta}} v^2_\delta \,d\mu_{\tilde{g}_\delta}$, using our lower bound on  ${R}_{\tilde{g}_{\delta}}$. See~\cite[Proposition 4.2]{Miao02} for details.

Next we will prove that zero mass implies that $R_g$ vanishes away from the corner $\Sigma$. Our argument is different from the one used to prove the analogous fact in~\cite{Miao02}. For this we choose any cutoff function $\chi$ compactly supported away from $\Sigma$. We consider the same $g_\delta$ as in Lemma~\ref{miaosmoothening}, but we define $u_\delta$ and $\tilde g_\delta= u_\delta^{\frac{4}{n-2}} g_\delta$ differently from in the proof of Theorem~\ref{approximation}. Instead we choose $u_\delta$ to be a solution to
\[ -a\Delta_{g_\delta} u_\delta+ [\chi R_g -(R_{g_\delta})^-] u_\delta = 0\]
that is asymptotic to $1$ in $\mathcal E$.
Once again, Lemma~\ref{maintool} implies that $u_\delta$ exists, and that  $\tilde g_\delta$ is complete and asymptotically flat and has nonnegative scalar curvature, as long as $\chi\le1$ is selected to be sufficiently small (independently of $\delta$). So Theorem~\ref{pmt} combined with~\eqref{massformula} gives
\begin{align*}
0&\le m_{\mathrm{ADM}}(\tilde g_\delta) \\
&=m_{\mathrm{ADM}}(g_\delta) 
- \frac{1}{2(n-1)\omega_{n-1}} \int_M [ \chi R_g -(R_{g_\delta})^-]u_\delta \,d\mu_{g_\delta}\\
&=  \frac{-1}{2(n-1)\omega_{n-1}} \int_M [ \chi R_g -(R_{g_\delta})^-]u_\delta \,d\mu_{g_\delta},
\end{align*} 
since $g_\delta$ has the same mass as $g$, which we assumed has zero mass.
For small enough $\chi$ and $\delta$, we know that $\tfrac{1}{2}<u_\delta<\tfrac{3}{2}$, and therefore since we know that $\|(R_{g_\delta})^-\|_{L^1(M)}\to0$ as $\delta\to0$, it follows that 
\[ 0 \ge \int_M \chi R_g \,d\mu_{g_\delta}.\]
Hence $R_g$ vanishes on the support of $\chi$.

Finally, we prove that zero mass implies that $g$ is Ricci-flat away from $\Sigma$. (Note that~\cite{Miao02} does not contain this argument.) Again we choose a smooth cutoff $\chi$ supported away from $\Sigma$. Define $\bar g_t = g_{t^2} + t\chi \Ric_g$, where we allow $t$ to be negative but not $0$, and $g_{t^2}$ is just the $g_\delta$ constructed in Lemma~\ref{miaosmoothening} but with $\delta=t^2$. Next we define $u_t$ to be a solution to
\[ -a\Delta_{\bar g_t} u_t+ R_{\bar g_t} u_t = 0\]
that is asymptotic to $1$ in $\mathcal E$. Note that since we now have $H_+= H_-$ along $\Sigma$ and $R_g=0$ on the complement of $\Sigma$, for small enough $\delta$, $R_{\bar g_t}$ will have the smallness required for the existence of $u_t$ and completeness of $\tilde g_t := u_t^{\frac{4}{n-2}}\bar g_t$ in Lemma~\ref{maintool}, and $\tilde g_t$ will be asymptotically flat and scalar-flat.

Using Theorem~\ref{pmt} and~\eqref{massformula}, we have 
\begin{align}
0&\le m_{\mathrm{ADM}}(\tilde g_t)\nonumber \\
&=m_{\mathrm{ADM}}(\bar g_t) 
- \frac{1}{2(n-1)\omega_{n-1}} \int_M R_{\bar g_t} u_t \,d\mu_{\bar g_t}\nonumber \\
&=  \frac{-1}{2(n-1)\omega_{n-1}} \int_M R_{\bar g_t} u_t \,d\mu_{\bar g_t}, \label{integral_scalar_inequality}
\end{align} 
since $\bar g_t$ has the same mass as $g$, which we assumed has zero mass.

Consider the limit
\begin{equation}\label{limit_to_compute}
\lim_{t\to0} \int_M \frac{R_{\bar g_t} u_t}{t} d\mu_{\bar g_t}.
\end{equation}

Note that the scalar curvature $R_{\bar g_t}$ is made up of a contribution from the neighborhood $\Sigma\times(-t^2, t^2)$ and a contribution from the support of $\chi$. Since both $u_t$ and $R_{\bar g_t}$ are uniformly bounded and the neighborhood  $\Sigma\times(-t^2, t^2)$ has volume on the order of $t^2$, it does not contribute to the limit. Therefore
\[ \lim_{t\to0} \int_M \frac{R_{\bar g_t} u_t}{t} d\mu_{\bar g_t} = \lim_{t\to0} \int_{\mathrm{spt} \chi} \frac{R_{\bar g_t} u_t}{t} d\mu_{g} =\int_M \chi |\Ric_g|^2\,d\mu_g,
\]
where the second equality follows from that fact that $u_t\to1$ uniformly by~\eqref{u-estimate}, and a standard computation. (For example, see \cite[page 96]{Lee}. The Ricci curvature shows up because we are effectively linearizing the scalar curvature operator at $g_0$.) On the other hand, 
by inequality~\eqref{integral_scalar_inequality}, once we know that the limit~\eqref{limit_to_compute} exists, it must be zero, and hence $\Ric_g$ vanishes on the support of $\chi$, proving the result.

We now consider the second case, where $(M, g)$ contains a scalar curvature shield  $U_0\supset U_1 \supset U_2 \supset \mathcal E$ such that $\overline {U_0 \setminus \mathcal E}$ is compact,  $\Sigma \subset U_0$, and $\Sigma\cap (\overline U_1 \setminus U_2)=\emptyset$. By Definition~\ref{def_shield}, we know that there is some $\ve>0$ such that 
$D_0 > \frac{4}{\kappa D_1}-\frac{2}{\eta} + 2\ve$. Recall from the proof of Theorem~\ref{approximation}, that for sufficiently small $\delta>0$, $U_0\supset U_1 \supset U_2 \supset \mathcal E$ will be a scalar curvature shield for $(M, \tilde g_\delta)$, where $\tilde g_\delta$ is the metric constructed in Theorem~\ref{approximation}. But more precisely, the argument given there shows that for small enough $\delta>0$, we have the inequality
\begin{equation}\label{uniform-shield}
    \tilde D_0 > \frac{4}{\tilde \kappa \tilde D_1}-\frac{2}{\tilde \eta} + \ve,
\end{equation}
where 
$\tilde D_0$, $\tilde D_1$, $\tilde\kappa$, and $\tilde\eta^{-1}$ are uniform lower bounds, independent of $\delta$, for the corresponding quantities for $\tilde g_\delta$. Since we know that $\lim_{\delta\to0} m_\mathrm{ADM}(\tilde g_\delta)= m_\mathrm{ADM}(g)$, it suffices to show that there is a positive lower bound for $m_\mathrm{ADM}(\tilde g_\delta)$ that is independent of $\delta$. 

To see this, we select a smooth cutoff function $\eta\ge0$ compactly supported in $U_1\setminus \overline U_2$. Consider a parameter $s>0$, and consider the function $w_{\delta,s}$ solving
\[ -a\Delta_{\tilde g_\delta} w_{\delta,s} + s\eta w_{\delta,s}= 0 \text{ on }U_0, \] 
with Neumann boundary condition at $\partial U_0$. By (the proof of) Lemma~\ref{maintool}, we can see that for sufficiently small $s$, independent of $\delta$, the solution $w_{\delta,s}$ exists and satisfies a uniform bound of the form 
\begin{equation}\label{uniform-s}
\sup_{U_0} |w_{\delta,s} -1| < Cs,
\end{equation}
where the constant depends on choice of $\eta$ but is independent of $\delta$. So for small enough $s$, we can define the conformal metric $\hat g_{\delta, s} = w_{\delta,s}^{\frac{4}{n-2}} \tilde g_\delta$, which is asymptotically flat and has 
\[ R_{\hat g_{\delta, s}}= ( R_{\tilde g_\delta} - s\eta)  w_{\delta,s}^{-\frac{4}{n-2}}\]
by~\eqref{conformalformula}. So thanks to the uniform bounds~\eqref{uniform-shield} and~\eqref{uniform-s}, and by the same reasoning used at the end of the proof of Theorem~\ref{approximation}, we can see that $U_0\supset U_1 \supset U_2 \supset \mathcal E$ is still a scalar curvature shield for $\hat g_{\delta, s}$ for small enough $s$ (independent of $\delta$), which we now fix. Therefore by Theorem~\ref{shielding} and~\eqref{massformula}, we have
\[
0< m_\mathrm{ADM} (\hat g_{\delta, s})\\
=  m_\mathrm{ADM}(\tilde g_\delta)-\frac{1}{2(n-1)\omega_{n-1}}\int_M s\eta  w_{\delta,s}  \,d\mu_{\tilde g_\delta}.
\]
By~\eqref{uniform-s}, we see that this provides a uniform positive lower bound for $m_\mathrm{ADM}(\tilde g_\delta)$, as desired.
\end{proof}

\section{Proofs of main theorems}
\begin{proof}[Proof of Theorem \ref{shieldingBY}]
The proof is the same as in \cite{ShiTam} except that we use Corollary \ref{pmtwithcorners} in place of \cite[Theorem 3.1]{ShiTam}, which Shi and Tam proved using spinor methods.

Assume (either case of) the hypotheses of Theorem~\ref{shieldingBY}, but for simplicity, assume that $\Sigma$ is connected. (The argument is essentially the same if there are multiple components.) In particular, we assume that there is a connected NNSC Riemannian manifold $(\Omega, g)$, either complete or having a scalar curvature shield, whose boundary $(\Sigma, \gamma)$ can be isometrically embedded as a strictly convex hypersurface in $\mathbb{R}^n$ with mean curvature $H_0$. The main construction of Shi and Tam  \cite{ShiTam} was to construct a metric $g_+$ on 
\[\mathcal E:= [0,\infty)\times \Sigma,\]
of the form $g_+= u^2 dr^2+\sigma_r$, with the following properties:
\begin{itemize}
    \item $(\mathcal E, g)$ is a scalar-flat, asymptotically flat end;
    \item the induced metric $\sigma_0$ on $\{0\}\times\Sigma$ is equal to $\gamma$, and the
     mean curvature of $\{0\}\times\Sigma\subset \mathcal E$ with respect to the normal pointing into $\mathcal E$ is identically equal to mean curvature $H$ of $\Sigma\subset\Omega$ with respect to the normal pointing out of $\Omega$; and
    \item $m_{\mathrm{ADM}}(\mathcal E, g_+)\le \frac{1}{(n-1)\omega_{n-1}}\int_\Sigma (H_0 -H)\,d\mu_\gamma=m_{\mathrm{BY}}(\Sigma, \gamma, H)$.
\end{itemize}
If we define $(M, g)$ to be the result of appropriately gluing the two Riemannian manifolds $(\Omega, g)$ and $(\mathcal E, g_+)$ together along their isometric boundary $(\Sigma, \gamma)$ so that $(M,g)$ has a corner in the sense of Definition~\ref{corner}, then the hypotheses and the items above directly imply that $(M,g)$ satisfies the hypotheses of Corollary~\ref{pmtwithcorners}. Thus $0\le m_{\mathrm{ADM}}(\mathcal E, g_+)\le m_{\mathrm{BY}}(\Sigma, \gamma, H)$. For the equality case, we simply apply Theorem~\ref{corner-equality}.
\end{proof}

To prove Theorem~\ref{miaocomplete}, we will need to recall and slightly generalize the following fact.
\begin{thm}[Chodosh--Li~\cite{ChodoshLi}]\label{ChodoshLiThm}  Let $2\le n\le 7$, and 
Let $T^n$ denote a $n$-dimensional torus and $X^n$ an arbitrary $n$-dimensional manifold. Then $T^n\csum X^n$ cannot carry a \emph{complete} metric $g$ such that $R_g\ge0$ everywhere and $R_g>0$ somewhere.
\end{thm}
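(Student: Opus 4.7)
My plan is to reduce this slight generalization to the original Chodosh--Li theorem (which assumes $R_g>0$ strictly), via a conformal deformation. The case $n=2$ is separate and topological: any complete surface $(M^2,g)$ with $R_g\ge 0$ and $R_g\not\equiv 0$ satisfies $\int_M K_g\,dA>0$, and so by the Cohn--Vossen inequality $\chi(M)\ge 1$; but $\chi(T^2\csum X^2)=\chi(X^2)-2\le 0$ for any connected surface $X^2$, a contradiction.

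For $3\le n\le 7$, suppose for contradiction that $(M^n,g)$ is a complete metric on $M=T^n\csum X^n$ with $R_g\ge 0$ and $R_g(p_0)>0$ at some $p_0\in M$. Setting $a=\tfrac{4(n-1)}{n-2}$, I would produce a smooth positive function $u$ on $M$ with uniform positive upper and lower bounds such that $-a\Delta_g u+R_g u>0$ everywhere. The conformal metric $\tilde g=u^{4/(n-2)}g$ would then be complete (bi-Lipschitz equivalent to $g$) and satisfy $R_{\tilde g}=u^{-(n+2)/(n-2)}(-a\Delta_g u+R_g u)>0$ everywhere, contradicting the original Chodosh--Li theorem on $(M,\tilde g)$.

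To build $u$, I would fix a smooth strictly positive function $f$ on $M$ decaying suitably at infinity, choose a smooth precompact exhaustion $\Omega_k\nearrow M$, and solve the Dirichlet problem
\[
-a\Delta_g u_k + R_g u_k = f \text{ in }\Omega_k, \qquad u_k\equiv 1 \text{ on }\partial\Omega_k.
\]
Positivity of $u_k$ follows from $R_g\ge 0$ and the strong maximum principle, and a $C^{2,\alpha}_{\loc}$ limit $u$ exists by interior elliptic estimates and Harnack's inequality.

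The hard part is establishing uniform two-sided positive bounds on $u_k$, since $M$ is noncompact: this requires a careful barrier construction making use of the local positivity of $R_g$ near $p_0$ and the prescribed decay of $f$, and is the main technical obstacle in this route. An alternative approach that may be cleaner is to revisit Chodosh--Li's $\mu$-bubble argument directly and check that strict positivity of $R_g$ is only used at an integrated scalar curvature step on the descended stable hypersurface, where it can be replaced by $R_g\ge 0, R_g\not\equiv 0$ with only minor modifications; this would bypass the conformal deformation entirely.
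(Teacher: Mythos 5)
The paper does not actually prove this statement: it is quoted directly from Chodosh--Li \cite{ChodoshLi}, whose $\mu$-bubble argument excludes complete metrics of \emph{strictly} positive scalar curvature on $T^n\csum X^n$ (the $R_g\ge 0$, $R_g>0$ somewhere version being the standard strengthening). Your overall strategy for $3\le n\le 7$ --- conformally deform to strict PSC and then invoke the strict version --- is the right and standard reduction, and your $n=2$ argument via Cohn--Vossen/Gauss--Bonnet is fine (note that finiteness of the topology, needed for Cohn--Vossen, is automatic here since $K_g\ge 0$). However, the step you flag as ``the main technical obstacle'' is a genuine gap as written: solving $-a\Delta_g u_k+R_gu_k=f$ on an exhaustion with boundary value $1$ does not obviously produce uniform two-sided positive bounds (the naive maximum principle gives neither a uniform upper bound nor a uniform positive lower bound when $R_g$ vanishes on large regions and $f$ decays), and without those bounds you get neither completeness of $\tilde g$ nor, a priori, a nontrivial limit with the right behavior at infinity. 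The good news is that the missing ingredient is precisely Kazdan's deformation theorem, which this paper records as Proposition~\ref{Kazdan} in the Appendix: taking $\Omega$ to be a small ball around $p_0$ where $R_g>0$ somewhere, the principal Neumann eigenvalue of $-a\Delta_g+R_g$ on $\Omega$ is automatically positive (a zero-eigenvalue minimizer would have to be a nonzero constant annihilated by $R_g$), so Kazdan supplies a function $u$ with $\delta<u<\delta^{-1}$ and $-a\Delta_g u+R_gu>0$, and your conformal argument then goes through verbatim. So the correct completion of your route is to cite (or reprove) Kazdan rather than run the Dirichlet exhaustion. Your alternative suggestion --- reworking the $\mu$-bubble argument directly and extracting strictness from the region where $R_g>0$ --- is also viable, and is in fact how the present paper proceeds for its shielded generalization (Theorem~\ref{ChodoshLiShield}), where the barriers force the $\mu$-bubble to pass through the set where the scalar curvature is strictly positive.
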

Actually, a close reading of Chodosh and Li's proof also yields the following quantitative statement: Suppose that $(M^n, g)$ is a Riemannian manifold containing a subset $M_0$ diffeomorphic to $T^n$ minus an open ball, and suppose that $(M_0, g)$ has strictly positive scalar curvature. Then there exists a constant $D>0$ depending only on the geometry of $(M_0, g)$ such that the $D$-neighborhood of $M_0$ in $(M, g)$ either fails to be compact, \emph{or} fails to have nonnegative scalar curvature.

While Theorem~\ref{ChodoshLiThm} is sufficient for proving Theorem~\ref{miaocomplete} for the case of a complete fill-in, we also need a ``shielded version'' of Theorem~\ref{ChodoshLiThm} to take care of the case of a ``shielded fill-in.''

\begin{thm}\label{ChodoshLiShield} Let $2\le n\le 7$. 
Suppose that $(M^n, g)$ is a NNSC Riemannian manifold containing a subset $M_0$ diffeomorphic to $T^n$ minus an open ball, such that  $(M_0, g)$ has strictly positive scalar curvature. Then $(M,g)$ cannot contain a scalar curvature shield $U_0\supset U_1 \supset U_2 \supset M_0$ with $\overline {U_0\setminus M_0}$ compact.
\end{thm}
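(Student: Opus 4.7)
The plan is to reduce to Theorem~\ref{ChodoshLiThm} by constructing from the shielded data a complete Riemannian manifold $(\tilde M, \tilde g)$ with $R_{\tilde g}\ge 0$, with $R_{\tilde g}>0$ on a region diffeomorphic to $M_0$, and with $\tilde M$ diffeomorphic to $T^n\csum X^n$ for some $X^n$. Such a $(\tilde M, \tilde g)$ is forbidden by Theorem~\ref{ChodoshLiThm}, yielding the desired contradiction. The philosophy parallels the proof of the shielded positive mass theorem (Theorem~\ref{shielding}) from \cite{LLU22}: the shield is used to replace the (possibly incomplete) structure of $M$ outside $U_0$ with a controlled NNSC extension.

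Topologically, take $\tilde M := \overline{U_0} \cup_{\partial U_0} (\partial U_0\times [0,\infty))$. Let $N := \overline{U_0\setminus M_0}$, which is compact by hypothesis with boundary $S^{n-1}\sqcup\partial U_0$, and let $N'$ be the result of gluing the half-cylinder $\partial U_0\times[0,\infty)$ to $N$ along $\partial U_0$; then $N'$ has boundary $S^{n-1}$ and one cylindrical end. Since $M_0\cong T^n$ minus an open ball, we have $\tilde M = M_0\cup_{S^{n-1}}N'\cong T^n\csum X^n$, where $X^n$ is obtained from $N'$ by capping its $S^{n-1}$ boundary with a ball.

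The metric $\tilde g$ will be built as follows. Equip the half-cylinder with a warped-product metric $dt^2+f(t)^2 h$ whose initial mean curvature is chosen, together with the bound $H_{\partial U_0}\le\eta$ on the $U_0$-side, so as to produce a ``good corner'' at $\partial U_0\times\{0\}$ in the sense of Definition~\ref{corner} (i.e.\ $H_+\le H_-$). One then applies a variant of Theorem~\ref{approximation} to smooth the corner and obtain NNSC on $\tilde M$. The shield inequality $D_0 > \frac{4}{\kappa D_1}-\frac{2}{\eta}$ enters here: it ensures that the conformal correction underlying Theorem~\ref{approximation}, carried out via a Neumann version of Lemma~\ref{maintool} on $U_0$ using the PSC reservoir on $\overline U_1\setminus U_2$ as source, produces a conformal factor that is uniformly close to $1$ on $M_0$, so that $R_{\tilde g}>0$ there while completeness of the cylinder end is preserved.

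The main obstacle will be this construction of $\tilde g$: obtaining pointwise NNSC after corner smoothing without compromising strict positivity on $M_0$, using only the shield data (rather than an asymptotically flat end, as in the setup of Theorem~\ref{approximation}). This is the same technical challenge resolved in the shielded PMT of \cite{LLU22}, where the specific form of the shield inequality was designed for exactly this kind of conformal correction; adapting those arguments to the present situation (no asymptotic end, Neumann data at $\partial U_0$, the cylindrical extension) is the heart of the work. Once $\tilde g$ is in hand, Theorem~\ref{ChodoshLiThm} applied to $\tilde M\cong T^n\csum X^n$ completes the argument.
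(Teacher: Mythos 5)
Your reduction stands or falls on a step that the proposal does not actually supply: the construction of the complete NNSC metric $\tilde g$ on $\overline{U_0}\cup_{\partial U_0}(\partial U_0\times[0,\infty))$. The shield only gives $H_{\partial U_0}\le \eta$ with respect to the normal pointing toward $U_0$, i.e.\ $H_-\ge-\eta$ with respect to the normal pointing into the attached end, and the corner condition you need is $H_+\le H_-$ pointwise. In the worst admissible case $H_-\equiv-\eta$, so whatever end you attach must be a complete NNSC manifold with boundary $(\partial U_0,h)$ whose mean curvature with respect to its \emph{outward} normal is $\ge\eta>0$ --- that is, precisely a complete NNSC fill-in of the Bartnik data $(\partial U_0,h,\tilde\eta)$ with $\tilde\eta\ge\eta$. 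None of the cited tools produces such an end. A warped product $dt^2+f(t)^2h$ fails whenever the induced metric $h$ has $R_h<0$ somewhere (nothing in the hypotheses prevents this): the corner condition forces $f'(0)<0$, completeness forces $f>0$ for all $t$, and then at an interior minimum of $f$, or in the monotone limit, one finds $R_{\tilde g}\le f^{-2}R_h<0$. Lemma~\ref{maintool} cannot repair this: it removes only potentials that are nonnegative or small in the stated norms, it is normalized at an asymptotically flat infinity, and the negative scalar curvature of a badly chosen cylindrical end is neither small nor compactly supported; likewise the shield inequality in item~\eqref{shield_condition} of Definition~\ref{def_shield} is not a device for keeping a Neumann conformal factor near $1$ (that is a smallness statement about the potential), so your description of its role is off. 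Worse, the required extension can be genuinely obstructed: shields with $D_0>4/(\kappa D_1)$ and arbitrarily large $\eta$ are allowed, and once $\eta$ exceeds the constant $\lambda$ of Theorem~\ref{miaocomplete} for $(\partial U_0,h)$, no complete NNSC fill-in with mean curvature $\ge\eta$ exists at all, so the reduction cannot even get started in that regime, independently of the topology of the piece you attach.

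For contrast, the paper never completes $(M,g)$ or invokes Theorem~\ref{ChodoshLiThm} as a black box. It runs the Chodosh--Li $\mu$-bubble argument of \cite{ChodoshLi} directly on the $\mathbb{Z}$-cover $\hat M=\hat M_0\cup\bigcup_k X_k$, where each $X_k$ is a copy of $M\setminus\Int M_0$: the Chodosh--Li weight function is extended by hand across each $X_k$ using the shield data, and the inequality $D_0>\frac{4}{\kappa D_1}-\frac{2}{\eta}$ enters exactly to show that either the weight reaches $\pm\infty$ before leaving $U_0$ or else $\partial U_0$ has negative $h$-mean curvature and serves as a barrier for the $\mu$-bubble; stability of the resulting $\mu$-bubble and the Schoen--Yau descent then give the contradiction, with no extension, corner smoothing, or conformal correction required. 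To salvage your ``complete-and-reduce'' strategy you would first have to solve the complete NNSC fill-in problem with a prescribed mean curvature lower bound for arbitrary boundary data, which is essentially the open problem the paper itself highlights.
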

\begin{rk}
In Theorem~\ref{ChodoshLiThm}, the torus $T^n$ can be replaced by the product of $S^1$ with any Schoen--Yau--Schick manifold. The same is true for Theorem~\ref{ChodoshLiShield}.
\end{rk}

\begin{proof}
The proof follows the basic strategy of Theorem~\ref{ChodoshLiThm} from~\cite{ChodoshLi}, with some modification by ideas from~\cite{LLU22, LUY21}.

Fix $\epsilon>0$. By assumption, $M_0$ is diffeomorphic to 
\[  \left\{\textbf{x}\in \mathbb{R}^n\,:\, |\textbf{x}-\textbf{k}|\ge \epsilon,\, \textbf{k}\in \mathbb{Z}^n \right\} /\, \mathbb{Z}^n,\]
where we mod out by the action of $\mathbb{Z}^n$ by addition.
So $M_0$ admits a $\mathbb{Z}$-cover $\hat M_0$ that is diffeomorphic to 
\begin{equation}
 \left\{\textbf{x}\in \mathbb{R}^n\,:\,|\textbf{x}-\textbf{k}|\ge\epsilon,\, \textbf{k}\in \mathbb{Z}^n\right\} /\, \mathbb{Z}^{n-1},
\end{equation} 
where we mod out by the action of $\mathbb{Z}^{n-1}$ by addition in the first $n-1$ components. That is, 
 $(x_1,\ldots,x_n)\sim (x_1+k_1,\ldots,x_{n-1}+k_{n-1},x_n)$ for any $(k_1,\ldots,k_{n-1}) \in\mathbb{Z}^{n-1}$. Define $\rho$ to be a $\mathbb{Z}^{n-1}$-invariant smooth function on $\mathbb R^n$ such that
 \[
 \rho(\mathbf{x})= \left\{
 \begin{array}{ll}
     x_n&\text{ if } \mathbf{x}-\mathbf{k}>2\epsilon\text{ for all }\mathbf{k}\in \mathbb{Z}^n \\
     k_n + \tfrac{1}{2}&\text{ if } \mathbf{x}-\mathbf{k}<\tfrac{3}{2}
     \epsilon\text{ for some }(k_1,\ldots,k_n)\in \mathbb{Z}^n.
 \end{array} \right.
 \]
Via diffeomorphism (and light abuse of notation), $\rho$ descends to a well-defined function on $\hat M_0$, and since $\rho$ can obviously be constructed equivariantly with the respect to the $\mathbb{Z}$ action in the last component, it is clear that there exists $L$ such that
\[ |\nabla \rho | < L,\]
 on $(\hat M_0, g)$.
 Observe that $M$ admits a   $\mathbb{Z}$-cover 
\[ \hat{M}:=\hat{M}_0\cup \left(\bigcup_{k\in \mathbb{Z}} X_k\right),\]
where each $X_k$ can be regarded as a copy of $M\setminus (\Int M_0)$, and the boundary $\partial X_k$ is identified with the component of $\partial\hat M_0$ corresponding to the $\epsilon$-sphere around $(0,...,0,k)$. Note that $\rho = k+\tfrac{1}{2}$ on $\partial X_k$. The metric $g$ lifts to a metric $\hat{g}$ on $\hat M$. 

So far this setup is the same as in~\cite{ChodoshLi} with only superficial differences. From here, \cite{ChodoshLi} constructed a function $h$ on $\hat M$ to be used as a weight for the existence of a $\mu$-bubble. For the $\mu$-bubble stability to be useful, it is important that $h$ satisfies
\begin{equation}\label{mu-condition}
    R_{\hat g} + h^2 - 2|\nabla h| >0
\end{equation}
everywhere. We will define $h$ on $\hat M_0$ as in~\cite{ChodoshLi}
but alter the definition in $X_k$. Let $\kappa>0$ be the uniform positive lower bound on $R_{\hat g}$ on $\hat{M}_0$. We define $h$ on $\hat{M}_0$ to be
\[ h(p) = -\sqrt{\kappa}\tan \left(\frac{\sqrt{\kappa}}{2L} \rho(p)\right),\]
 whenever $|\frac{\sqrt{\kappa}}{2L} \rho(p)|<\pi/2$. For values  outside that range, we define $h$ to be $\pm\infty$ in such a  way that makes $h$ a continuous function to $[-\infty, \infty]$.
A simple computation then shows that $h$ satisfies~\eqref{mu-condition} on $\hat M_0$.

Let
\[ h_k := -\sqrt{\kappa}\tan \left(\frac{\sqrt{\kappa}}{2L} (k+\tfrac{1}{2})\right)\]
denote the value of $h$ on $\partial X_k$. We need to define $h$ on $X_k$. By assumption, $X_k$ has a scalar curvature shield $U_0\supset U_1\supset U_2\supset \partial X_k$. (We will forgo putting $k$ subscripts on $U_0$, $U_1$, and $U_2$.) Let $\rho_k(p)=\dist(p, U_1)$ in $X_k$ and choose $\alpha$ to be a regular value of $\rho_k$ that is slightly less than $\frac{4\kappa}{D_1}$, where $\kappa$ is the assumed positive lower bound on $R_g$ in the region $U_1\setminus U_2$, and $D_1=\dist(U_2, \partial U_1)$. Note that $|\nabla \rho_k|\le 1$.

We consider different cases for $h_k$. When $h_k=\pm\infty$, we just define $h=\pm\infty$ on all of $X_k$. For $h_k$ finite, we consider two cases. 

\underline{Case 1:} $|h_k|< \frac{2}{\alpha}$.  We define
\begin{equation*}
h(p) = \left\{
\begin{array}{ll}
\frac{\pm 2}{\alpha - \rho_k(p)}&\text{ on }U_0\setminus U_1 \\
h_k &\text{ on }U_2 
\end{array}\right.
\end{equation*}
where  $\pm$ is the sign of $h_k$,
and then interpolate in the region $U_1\setminus U_2$. As explained in~\cite{LLU22}, this $h$ will satisfy~\eqref{mu-condition}. This is easily verified on $U_0\setminus U_1$ and $U_2$. For the interpolation region $U_1\setminus U_2$, the point is that $|\nabla h|$ can be bounded by $\frac{1}{D_1}\cdot \frac{2}{\alpha}$, and then~\eqref{mu-condition} will hold on $U_1\setminus U_2$ since $R_g\ge \kappa > \frac{D_1 \alpha}{4}$ there. 

\underline{Case 2:} $|h_k| \ge \frac{2}{\alpha}$. This case is actually simpler, because we can define $h(p)=h_k$ on all of $U_1$ and then on $U_0\setminus U_1$, we define
\[ h(p) = 
\frac{\pm 2}{\alpha - \rho_k(p)} + \left(h_k - \frac{\pm 2}{\alpha}\right)
\]
where  $\pm$ is the sign of $h_k$. Note that $h$ is continuous at $\partial U_1$ and that 
the assumption on $h_k$ guarantees $h^2 \ge \frac{4}{(\alpha-\rho_k(p))^2}$, so that 
\[ h^2 - 2|\nabla h|\ge0,\]
and hence~\eqref{mu-condition} holds.

Once we have constructed $h$ with the desired properties, the rest of the argument that leads to a contradiction is nearly the same  as in~\cite{ChodoshLi}. In the case where $\alpha<D_0 = \dist(U_1, \partial U_0)$, we can see that $h_k$ will reach $\pm\infty$ exactly as in~\cite{ChodoshLi}. Otherwise, the shielding assumption~\eqref{shield_condition} tells us that $D_0>\alpha-\frac{2}{\eta}$, where $\eta$ is the upper bound on the mean curvature of $\partial U_0$. In this case, for $h_k>0$, we see that the $h$-mean curvature of $\partial U_0$ is 
\[ H_{\partial U_0} - h \le \eta - \frac{2}{\alpha-D_0} = \left(\alpha-D_0 -\frac{2}{\eta}\right)\cdot\frac{\eta}{\alpha-D_0}<0,\]
so we can still use $\partial U_0$ as a barrier for the $\mu$-bubble problem (instead of using $h=\infty$ there), and the proof proceeds in the same manner. (For $h_k<0$, the argument is the same but we reverse the normal on $\partial U_0$.)

In summary, there must exist a stable $\mu$-bubble with respect to the weight $h$. The condition~\eqref{mu-condition} guarantees that the induced metric on this stable $\mu$-bubble is conformal to one with positive scalar curvature.\footnote{Because of where the barriers are, the $\mu$-bubble has to intersect $\hat M_0$, where the inequality~\eqref{mu-condition} is strict. This is why we can obtain positive scalar curvature rather than just nonnegative scalar curvature.} But we have enough control over the topology of the $\mu$-bubble (as in the standard Schoen--Yau descent argument~\cite{SY79}) to obtain a contradiction. See~\cite{ChodoshLi} for details.
 
\end{proof}


Finally, we recall a crucial theorem from \cite{shi3} which will be used to prove Theorem~\ref{miaocomplete}.

\begin{thm}[Theorem 1.1 of~\cite{shi3}]\label{cobordismlemma}
Let $X$ be a compact manifold with nonempty boundary $Y = \partial X$. Then any metric $\gamma$ on $Y$ can be extended to a Riemannian
metric $g$ on $X$ with positive scalar curvature.
\end{thm}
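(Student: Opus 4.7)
The plan is to construct $g$ in two pieces that glue together smoothly: a positive-scalar-curvature (PSC) metric $g_1$ on the interior core of $X$, and a PSC collar metric interpolating between $g_1$ and the prescribed boundary metric $\gamma$.

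First I would pick a collar neighborhood $Y \times [0, 2\delta] \hookrightarrow X$ and construct a PSC metric $g_1$ on the compact manifold-with-boundary $X_1 := X \setminus (Y \times [0, \delta))$. This step is essentially unobstructed topologically: since $X_1$ is a compact manifold with nonempty boundary, it deformation-retracts onto a codimension-one spine, and standard Gromov--Lawson surgery arguments produce a PSC metric. Moreover I can arrange $g_1$ to take any convenient form near $Y\times\{\delta\}$; in particular, after a conformal rescaling, one can make $R_{g_1}$ as large as desired there.

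Next I would produce a PSC metric on the collar $Y \times [0, \delta]$ of the form $g = dt^2 + h_t$ with $h_0 = \gamma$ and whose first and second fundamental forms at $t = \delta$ match those of $g_1$. For such metrics the scalar curvature decomposes by Gauss--Codazzi as
\begin{equation}
R_g \;=\; R_{h_t} - |\mathrm{II}|^2 - H^2 - 2\partial_t H,
\end{equation}
where $H$ and $\mathrm{II}$ are the mean curvature and second fundamental form of the slice $Y \times \{t\}$. The central trick is that by arranging $H(t)$ to drop very sharply on a subinterval, the term $-2\partial_t H$ can be made arbitrarily large and positive, overwhelming all other contributions even when $R_{h_t}$ is negative. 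Concretely I would write $h_t = \phi(t)^2 \gamma_t$, where $\gamma_t$ is a smooth path of metrics from $\gamma$ to $g_1|_{Y \times \{\delta\}}$, and tune the warping factor $\phi$ so that a short region of rapid fiber-shrinking supplies a large positive reserve of scalar curvature.

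The main obstacle is simultaneously matching the prescribed boundary data at both ends, maintaining $R_g > 0$ pointwise throughout the collar, and absorbing the uncontrolled contribution of $R_{h_t}$ coming from the arbitrary $\gamma$. I expect the delicate step to be designing $\phi$ and the path $\gamma_t$ so that the $-2\partial_t H$ reserve is uniformly dominant across the entire interpolation region; once this ``neck'' is engineered, the remaining work (smoothing at the glue locus $t = \delta$ and verifying regularity of the resulting metric) should follow from standard perturbation and mollification arguments.
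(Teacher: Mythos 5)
You should first note that the paper does not prove this statement at all: it is quoted verbatim as Theorem 1.1 of Shi--Wang--Wei \cite{shi3} and used as a black box in the proof of Theorem~\ref{miaocomplete}, so there is no internal argument to compare against. Your sketch is therefore an attempt at that external theorem, and at the level of strategy it does resemble the actual one (produce some PSC metric on $X$ with no boundary control, then build a PSC collar interpolating from $\gamma$ to the induced metric of that PSC metric, exploiting the $-2\partial_t H$ term in $R_g = R_{h_t}-|\mathrm{II}|^2-H^2-2\partial_t H$). But as written it has genuine gaps, and the central one is exactly the point that made this a conjecture of Gromov rather than an exercise.

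Two concrete issues. First, the justification of the interior step is off: Gromov--Lawson surgery is a codimension-$\ge 3$ technique, so retracting onto a codimension-one spine buys you nothing; the correct input for ``every compact manifold with nonempty boundary carries a PSC metric'' is Gromov's h-principle for open manifolds (cf.\ \cite{GromovFour}), and this part is repairable. Second, and essentially, the bending trick does not close in the form you describe. To keep $R_g>0$ pointwise you need $-2\partial_t H > |R_{h_t}|+H^2+|\mathrm{II}|^2$ throughout the portion of the collar in which the path $\gamma_t$ from $\gamma$ to $g_1|_Y$ is traversed, and $R_{h_t}$ there is dictated by the arbitrary $\gamma$ and the path, not by anything you control at the ends. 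Forcing $H$ to drop at rate $\gtrsim |R_{h_t}|$ over that whole region makes the total excursion of $H$ large, so the $-H^2$ penalty grows quadratically in precisely the quantity you are inflating; slowing down the path to shrink $|\mathrm{II}|^2$ (which involves the full tensor $\partial_t h_t$, not just its trace $H$) lengthens the region on which positivity must be maintained, and conformally inflating $R_{g_1}$ near the glue slice does nothing inside the collar. No mechanism is offered to break this circle, and the naive estimate does not self-consistently close; resolving it is the actual content of \cite{shi3}, whose proof requires a delicate quantitative collar construction with rescalings along the path of metrics. A smaller point: matching the full second fundamental form at $t=\delta$ is both stronger than needed and hard to arrange; the standard route is to match induced metrics with a mean-curvature inequality $H_+\le H_-$ and invoke corner smoothing, exactly as in Section~\ref{pmtsection} of this paper.
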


\begin{proof}[Proof of Theorem~\ref{miaocomplete}]
We adapt the idea of \cite{Miao20} to our setting. Let $(\Sigma^{n-1}, \gamma)$ be our given closed Riemannian manifold with $n\le 7$.
Let $X^n$ be the smooth manifold obtained by taking $\Sigma^{n-1}\times [0,1]$ and attaching a torus $T^n$ via a connected sum at an interior point of the product. By Theorem~\ref{cobordismlemma}, there exists a metric $h$ on $X$ such that $R_h>0$ everywhere and $h$ induces the given metric $\gamma$ on both boundary components of $X$.
Let $H$ denote the mean curvature of $\partial X$ with respect to the metric $h$ and the normal pointing into $X$. We set 
\[\lambda:=\max_{\partial X}H.\]

We claim that this  is the desired $\lambda$ in the conclusion of Theorem~\ref{miaocomplete}, which we will prove by contradiction. Suppose $(\Omega,g)$ is a NNSC fill-in of $(\Sigma,\gamma,\eta)$ satisfying the hypotheses of the theorem, but for which 
\begin{equation}\eta >\lambda\text{ on }\Sigma.\label{MCineq}\end{equation}
Here $\eta$ denotes the mean curvature of $\partial\Omega$ with respect to the outward pointing normal. Let $(\Omega_1,g)$ and $(\Omega_2, g)$ denote two copies of $(\Omega, g)$ and glue $\partial\Omega_1$ to one component  of $\partial X$ and  $\partial\Omega_2$ to the other component of $\partial X$. This results in a 
 new Riemannian manifold $\Omega_1\cup X\cup \Omega_2$ whose glued metric has a corner along $\partial X$ in the sense of Definition \ref{corner}, and the condition \eqref{MCineq} implies the metric satisfies the ``distributional NNSC" hypothesis (appearing in Theorem~\ref{approximation}) across the corner $\partial X$. 

To obtain the contradiction without any more work, we can argue as follows: By construction, there is point $p$ in the interior of $X$ with strictly positive scalar curvature. We now take a complete asymptotically flat Riemannian manifold $N$ with positive scalar curvature\footnote{For a concrete choice, one can take a $t=0$ slice of a Reissner--Nordstr\"om black hole with $0<|e|\le m$.} and perform a Gromov--Lawson PSC connected sum joining $N$ to $\Omega_1\cup X\cup \Omega_2$ by deleting a small neighborhood of $p$.  The new Riemannian manifold $Y=(\Omega_1\cup X\cup\Omega_2)\csum N$ constructed in this way satisfies the hypotheses of our NNSC approximation theorem (Theorem \ref{approximation}), yielding a new smooth metric $g'$ on $Y$.
In the shielded case, we can approximate and apply our Theorem \ref{ChodoshLiShield} to $(Y, g')$ to obtain a contradiction. In the complete case, we obtain from the approximation theorem a complete Riemannian manifold $(Y, g')$ with scalar curvature that is nonnegative everywhere and positive on a nonempty open set. By Theorem \ref{ChodoshLiThm}, we have reached a contradiction. 

The step of attaching an asymptotically flat end was only done for convenience -- the corner along $\partial X$ can be smoothed out directly. We refer the reader to the appendix for this argument.
\end{proof}

\begin{appendix}

\section{Miao smoothing for complete manifolds}

In this appendix, we show how our adaptation of Miao's ideas in Section \ref{pmtsection} can be applied to complete manifolds with NNSC along a compact corner, without any assumption of asymptotic flatness. Another approach might be to work by hand and use the local $h$-principle, see \cite{G18, GromovFour}.

We will require the following result of Kazdan \cite{Kazdan}. 

\begin{prop}\label{Kazdan}
Let $(M^n,g)$ be a complete Riemannian manifold, and let $V\in C^\infty(M)$. Suppose there exists a bounded domain $\Omega\subset M$ such that $\spt V^-\subset\Omega$ and $\mu_1(\Omega)>0$, where $\mu_1$ denotes the principal Neumann eigenvalue of $-\Delta+V$ on $\Omega$. Then there exists a function $u$ on $M$ and a constant $\delta>0$ such that $\delta<u<\delta^{-1}$ and $-\Delta u+Vu>0$.

In particular, if the scalar curvature of $(M,g)$ is nonnegative outside of a bounded domain $\Omega$ and the principal Neumann eigenvalue of the conformal Laplacian on $\Omega$ is positive, then $g$ is conformal to a complete metric with positive scalar curvature (PSC). 
\end{prop}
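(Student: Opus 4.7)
The plan is to construct $u$ in two stages: first on $\Omega$, using the positive first Neumann eigenvalue, and then extending to $M$ using the sign condition $V\ge0$ outside $\Omega$. The ``in particular'' clause then follows by a direct conformal change.

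Stage one. The hypothesis $\mu_1(\Omega)>0$ yields a smooth positive principal Neumann eigenfunction $\phi\in C^\infty(\overline\Omega)$ satisfying
\[
(-\Delta+V)\phi=\mu_1\phi>0\text{ on }\Omega,\qquad \partial_\nu\phi=0\text{ on }\partial\Omega.
\]
On the compact set $\overline\Omega$ there exist constants $c_1,c_2>0$ with $c_1\le\phi\le c_2$. Using the Neumann condition, $\phi$ extends to a $C^1$ function across $\partial\Omega$ to a slight thickening $\Omega_1\supset\supset\Omega$, which after mollification yields a smooth positive $\tilde\phi$ on $\overline{\Omega_1}$; by continuity the strict inequality $(-\Delta+V)\tilde\phi>0$ is preserved provided the thickening is small enough.

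Stage two, the heart of the argument. On $M\setminus\overline{\Omega_1}$ the potential $V$ is nonnegative, so $-\Delta+V$ satisfies the maximum principle there. Choose an exhaustion by relatively compact open sets $K_k\nearrow M$ containing $\Omega_1$ and solve, on each annular region, the Dirichlet problem
\[
(-\Delta+V)w_k=0\text{ on }K_k\setminus\overline{\Omega_1},\quad w_k=\tilde\phi\text{ on }\partial\Omega_1,\quad w_k=c_1\text{ on }\partial K_k.
\]
Comparison with the constants $c_1$ and $c_2$ yields uniform two-sided bounds $c_1\le w_k\le c_2$, independent of $k$. Elliptic regularity and a diagonal subsequence then produce a smooth positive limit $w$ on $M\setminus\overline{\Omega_1}$ with $(-\Delta+V)w=0$, $w=\tilde\phi$ on $\partial\Omega_1$, and $c_1\le w\le c_2$. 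Piecing $w$ together with $\tilde\phi$ and smoothing the join along $\partial\Omega_1$ produces a $C^2$ positive bounded function $u_0$ on $M$ with $(-\Delta+V)u_0\ge0$ and strict inequality on $\Omega$. A small perturbation of $u_0$---for example, replacing $u_0$ by $u_0(1+\varepsilon\psi)$ for a carefully chosen bounded positive $\psi$ supported in a neighborhood of $\overline{\Omega_1}$---absorbs the smoothing error at $\partial\Omega_1$ using the strict surplus from $\mu_1>0$ and upgrades the inequality to strict throughout $M$, while keeping $u$ pinched between positive constants.

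The main obstacle I expect is precisely this last point: arranging a strict pointwise inequality $-\Delta u+Vu>0$ on all of $M$, as opposed to merely the nonstrict version, since $V$ may vanish on large portions of $M\setminus\Omega$. The key tools are (i) the strict excess $(-\Delta+V)\tilde\phi\gtrsim\mu_1 c_1$ on $\Omega$, which provides room to absorb smoothing errors near $\partial\Omega_1$, and (ii) the maximum principle for $-\Delta+V$ on the region $V\ge0$, which was used to obtain uniform bounds on $w_k$.

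For the ``in particular'' statement, set $V=R_g/c_n$ with $c_n=4(n-1)/(n-2)$, so that $\spt V^-=\spt R_g^-\subset\Omega$ and the eigenvalue hypothesis translates directly. Apply the first part to produce $u$, and define $\tilde g=u^{4/(n-2)}g$. The conformal transformation law gives
\[
R_{\tilde g}=c_n\,u^{-(n+2)/(n-2)}\bigl(-\Delta u+Vu\bigr)>0,
\]
so $\tilde g$ is PSC. Since $\delta<u<\delta^{-1}$, the metrics $\tilde g$ and $g$ are bi-Lipschitz equivalent, so completeness of $g$ implies completeness of $\tilde g$.
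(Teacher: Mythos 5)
The paper itself does not prove this proposition; it quotes it from Kazdan \cite{Kazdan}, so what follows is an assessment of your argument on its own terms. There are two genuine gaps, both located exactly where the difficulty of the lemma lives, namely on the noncompact region where $V\ge 0$. First, the comparison bound $c_1\le w_k$ is unjustified and false in general: for $L=-\Delta+V$ with $V\ge0$, constants are \emph{supersolutions}, not subsolutions, so $L(w_k-c_1)=-Vc_1\le 0$ makes $w_k-c_1$ a subsolution with nonnegative boundary data, and the maximum principle gives no lower bound on it (only $0<w_k\le c_2$ survives). Concretely, if $V\equiv 1$ outside $\Omega$, the radial model $-w''+w=0$ on annuli with outer boundary value $c_1$ shows that $w_k$ dips below $c_1$ and the limit $w$ decays exponentially to $0$ at infinity; thus your $u$ need not satisfy $u>\delta$, and in the ``in particular'' application the conformal metric $u^{4/(n-2)}g$ can fail to be complete.

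Second, strictness: your exterior function satisfies $Lw=0$ identically on $M\setminus\overline{\Omega_1}$, and the proposed repair $u_0(1+\varepsilon\psi)$ with $\psi$ supported near $\overline{\Omega_1}$ leaves $u$ untouched far out, where one still has only $Lu=0$, not $Lu>0$. On the set where $V$ vanishes (which may be all of $M\setminus\Omega$), the requirement $-\Delta u+Vu>0$ forces $u$ to be strictly superharmonic while pinched between positive constants on a noncompact end; this is achievable (e.g.\ $u=2-1/\log r$ on $\mathbb{R}^2\setminus B_e$), but it cannot come from an $L$-harmonic exterior extension plus a compactly supported perturbation --- producing such a $u$ is precisely the nontrivial content of Kazdan's lemma, and your scheme does not supply the needed idea. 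A lesser, repairable issue: in Stage one, mollifying the $C^1$ extension of $\phi$ does not preserve $(-\Delta+V)\tilde\phi>0$ ``by continuity,'' since the second derivatives of the extension jump across $\partial\Omega$ and the exterior one-sided Laplacian need not satisfy the inequality at all; this step also needs an actual argument, though unlike the two points above it is not fatal to the strategy.
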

Using this fact, we can prove the following:
\begin{prop}
Let $(M^n,g)$ be a complete manifold admitting a corner along the closed hypersurface $\Sigma^{n-1}$ in the sense of Definition \ref{corner}. Suppose $R_g\ge 0$ on $M\setminus \Sigma$ and $H_+\le H_-$. Then either 
\begin{enumerate}
    \item $\Ric_g\equiv 0$ on $M\setminus\Sigma$ and $H_+\equiv H_-$ on $\Sigma$, or
    \item $M$ admits a complete PSC metric, with respect to some $C^{3,\alpha}$-differentiable structure.
\end{enumerate}
\end{prop}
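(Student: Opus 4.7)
The plan is to show that if case (1) fails, then we can construct, possibly after a small Ricci-directed perturbation, a smooth Riemannian metric on $M$ (with respect to a new $C^{3,\alpha}$-differentiable structure) whose scalar curvature is nonnegative outside a bounded domain $\Omega$ and for which the principal Neumann eigenvalue of the conformal Laplacian on $\Omega$ is strictly positive. Proposition~\ref{Kazdan} then supplies a complete PSC metric on $M$, putting us in case~(2).

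The starting point is Miao smoothing (Lemma~\ref{miaosmoothening}), producing a family $g_\delta$ with $g_\delta = g$ outside the $2\delta$-tubular neighborhood $U_\delta$ of $\Sigma$. Hence $R_{g_\delta} = R_g \ge 0$ on $M\setminus U_\delta$, while $(R_{g_\delta})^-$ is uniformly bounded and supported in $U_\delta$, a set of volume $O(\delta)$; in particular $\|(R_{g_\delta})^-\|_{L^{n/2}}\to 0$ as $\delta\to 0$. Moreover, if $H_+(p) < H_-(p)$ at some $p\in\Sigma$, Lemma~\ref{miaosmoothening} gives a concentrated positive contribution of size at least $c\delta^{-2}$ on a definite subset of $U_{\delta^2/100}$ near $p$, so that $\int R_{g_\delta}^+\,d\mu_{g_\delta}\ge c>0$ uniformly in $\delta$.

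Now suppose case~(1) fails; I split into three subcases. (a) If $H_+ \not\equiv H_-$, take $g' = g_\delta$ for small $\delta$; the Miao contribution above provides the required positive scalar curvature. (b) If $H_+ \equiv H_-$ but $R_g > 0$ on an open set $W\subset M\setminus\Sigma$, again take $g'=g_\delta$; since $R_{g_\delta} = R_g > 0$ on $W$ for every $\delta$, we are done. (c) If $H_+ \equiv H_-$ and $R_g \equiv 0$ on $M\setminus\Sigma$ but $\Ric_g \not\equiv 0$ somewhere away from $\Sigma$, I mimic the last part of the proof of Theorem~\ref{corner-equality} and set $\bar g_t := g_{t^2} - t\chi\,\Ric_g$ for a cutoff $\chi\ge 0$ compactly supported in $M\setminus\Sigma$ and concentrated at a point where $\Ric_g\ne 0$. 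Linearizing scalar curvature in the direction $-t\chi\,\Ric_g$ and using $R_g\equiv 0$, $\tr_g\Ric_g=0$, and $\Div_g\Ric_g=0$ (second Bianchi), a standard integration-by-parts gives
\[\int_M R_{\bar g_t}\,d\mu_{\bar g_t} = t\int_M \chi|\Ric_g|^2\,d\mu_g + O(t^2) + \int_M R_{g_{t^2}}\,d\mu_{g_{t^2}},\]
and the smoothing integrand is bounded on the set $U_{t^2}$ of volume $O(t^2)$, so for small $t>0$ the Ricci term dominates and we take $g' = \bar g_t$.

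Finally, fix a bounded domain $\Omega$ large enough to contain $\spt\chi\cup U_\delta$ (or just $U_\delta$ in cases (a) and (b)). Writing $V:=c_n R_{g'}$ with $c_n=\tfrac{n-2}{4(n-1)}$, the negative part $V^-$ has small $L^{n/2}(\Omega)$-norm. Decomposing a test function $f\in H^1(\Omega)$ as $\bar f + f_0$ with $f_0$ of zero mean, Hölder and Sobolev give
\[-\int_\Omega V^- f^2\,d\mu_{g'} \ge -\|V^-\|_{L^{n/2}(\Omega)}\|f\|_{L^{2n/(n-2)}(\Omega)}^2 \ge -o(1)\bigl(\|\nabla f\|_{L^2(\Omega)}^2 + \bar f^2|\Omega|\bigr),\]
while the positive part contributes a uniformly positive Rayleigh-quotient term from one of the three cases above. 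For sufficiently small parameter (either $\delta$, or $t$ with $\delta=t^2$), this yields $\mu_1(\Omega)>0$, and Proposition~\ref{Kazdan} completes the proof. The main obstacle will be the coordinated choice of $t$ and $\delta=t^2$ in case~(c), ensuring that the order-$t$ positive Ricci contribution (on a set of fixed volume) uniformly dominates both the $O(t^2)$ quadratic error in the perturbation expansion and the bounded smoothing error on $U_{t^2}$; once the balance is set, the argument is routine.
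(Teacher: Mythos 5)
Your overall skeleton (Miao smoothing, an optional perturbation in the $\Ric_g$ direction, and then Proposition~\ref{Kazdan} once the principal Neumann eigenvalue of the conformal Laplacian on a fixed bounded $\Omega$ is positive) is the same as the paper's; the difference is that you try to verify $\mu_1(\Omega)>0$ by a direct Rayleigh-quotient estimate, whereas the paper argues by contradiction with the normalized eigenfunctions $u_i$, showing $u_i\to A\neq 0$ and then deriving a contradiction case by case. Your treatment of the negative part (Hölder against $\|V^-\|_{L^{n/2}}\to 0$ after writing $f=\bar f+f_0$) and of the case where $R_g>0$ on a fixed open set away from $\Sigma$ is fine. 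The genuine gap is in the mean-curvature-jump case: from Lemma~\ref{miaosmoothening} the positive scalar curvature created by $H_-\!-\!H_+>0$ is of size $\delta^{-2}$ but concentrated on a slab of thickness $O(\delta^2)$ around $\Sigma$, so the uniform lower bound $\int R_{g_\delta}^+\,d\mu\ge c>0$ does \emph{not} by itself give a ``uniformly positive Rayleigh-quotient term'': a test function $f$ with $\bar f$ of order one can be made small on that shrinking slab at bounded gradient cost, so $\int R_{g_\delta}^+f^2$ is not controlled from below by $\bar f^2$ without further argument. What is needed is a trace/capacity statement along the hypersurface: the weighted average of $f^2$ across the slab controls $\int_{\Sigma'}f^2\,d\sigma$ up to $C\delta^2\|\nabla f\|^2$ (a one-dimensional fundamental-theorem-of-calculus estimate), and then a trace plus Poincar\'e inequality gives $\int_{\Sigma'}f^2\ge \tfrac12|\Sigma'|\bar f^2-C\|\nabla f\|^2$, after which one uses only a small multiple of $R^+$ so the gradient error is absorbed by $a\|\nabla f\|^2$. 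This is exactly the content that the paper imports from Miao's concentration argument (equations (65)--(76) of \cite{Miao02}), applied there to eigenfunctions converging in $C^{2,\alpha}_{\mathrm{loc}}$ away from $\Sigma$; your one-sentence assertion skips it entirely.

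A smaller but real omission occurs in your case (c): after the perturbation $-t\chi\Ric_g$, the scalar curvature on $\operatorname{spt}\chi$ is of size $O(t)$ but is \emph{not} pointwise nonnegative -- only its integral is $\approx t\int\chi|\Ric_g|^2>0$ -- so the quotient bound requires the full decomposition $\int R f^2=\bar f^2\int R+2\bar f\int Rf_0+\int Rf_0^2$, with the cross term estimated by $\|R\|_{L^{2n/(n+2)}}=O(t)$ and absorbed via Young's inequality into $a\|\nabla f\|^2$ and a half of the $t\bar f^2$ term, and with $\int Rf_0^2$ absorbed using $\|R^-\|_{L^{n/2}}=o(1)$ (note this is only $O(t^{4/n})$ from the smoothing region, not $o(t)$, which is harmless precisely because it multiplies $\|\nabla f\|^2$ rather than $\bar f^2$). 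These repairs make your direct Rayleigh-quotient route work and would give a proof genuinely different in mechanism from the paper's eigenfunction-contradiction argument (and arguably more quantitative), but as written the key step in the $H_+\not\equiv H_-$ case is asserted rather than proved.
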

\begin{proof}
 We consider three cases: (a) $R_g>0$ somewhere away from $\Sigma$, (b) $R_g\equiv 0$ but $H_+<H_-$ somewhere on $\Sigma$, and (c) $R_g\equiv 0$ and $H_+\equiv H_-$ but $|\Ric_g|>0$ somewhere in $M\setminus\Sigma$. These exhaust all possible situations when (1) is false, so in all three cases we will show that we are in case (2). 

We begin the proof of cases (a) and (b) in essentially the same way. In either case, $\Omega$ will be a connected, bounded domain containing $\Sigma$. In case (a), we will additionally assume that $\Omega$ contains the closure of an open ball $B$ on which we have $R_g\ge\kappa>0$, for some constant $\kappa$. 

Let $g_\delta$ denote the Riemannian metric obtained from applying Miao's smoothing Lemma \ref{miaosmoothening} to the current setting.
With this $\Omega$ fixed, let $\mu_\delta$ denote the principal Neumann eigenvalue of the conformal Laplacian of $g_\delta$ on $\Omega$. If $\mu_\delta>0$ for some $\delta>0$, then $M$ admits a complete PSC metric by Proposition~\ref{Kazdan}. In fact, we will prove the stronger statement that $\mu_\delta>0$ for all sufficiently small $\delta>0$. To the contrary, suppose we can find a sequence  $\delta_i\to0$ such that $\mu_{\delta_i}\le 0$. Let $u_i$ be the positive $L^2$-normalized Neumann eigenfunction associated to $\mu_{\delta_i}$, and let $R_i$ denote the scalar curvature of $g_{\delta_i}$. Integrating the eigenvalue equation and using the Neumann condition, we have
\begin{equation}\label{eigenvalue}
\int_\Omega a|\nabla u_i|^2+\int_\Omega R_i u_i^2 = \mu_{\delta_i}.
\end{equation}
Since  we assumed $\mu_{\delta_i}\le0$, it follows that
\[\int_\Omega a|\nabla u_i|^2+\int_\Omega R^+_{i}u_i^2\le \int_\Omega R^-_{i}u_i^2.\]
Applying the Sobolev inequality to the left-hand side and H\"older's inequality to the right-hand side, we have
\[\left(\int_\Omega |u_i-\overline{u_i}|^\frac{2n}{n-2}\right)^{\frac{n-2}{n}}+\int_\Omega R_{i}^+u_i^2 \le C\left(\int_\Omega |R_{i}^{-}|^\frac{n}{2}\right)^\frac{2}{n}\left(\int_\Omega |u_i|^\frac{2n}{n-2}\right)^{\frac{n-2}{n}}.\]
By Minkowski's inequality, we have 
\[\left(\int_\Omega |u_i|^\frac{2n}{n-2}\right)^{\frac{n-2}{n}}\le C\left(|\overline{u_i}|+\left(\int_\Omega |u_i-\overline{u_i}|^\frac{2n}{n-2}\right)^{\frac{n-2}{n}}\right) .\]
By H\"older's inequality (and compactness of $\Omega$), we have
\[|\overline{u_i}|\le C \|u_i\|_{L^1(\Omega)}\le C\|u_i\|_{L^2(\Omega)}=C.\]
Since $\delta_i\to0$, Lemma~\ref{miaosmoothening} implies that
\[\left(\int_\Omega |R_{i}^{-}|^\frac{n}{2}\right)^\frac{2}{n}\to 0.\]
Combined with the three previous inequalities, it is straightforward to conclude that
\begin{equation}\int_\Omega |u_i-\overline{u_i}|^\frac{2n}{n-2}\to 0\quad\text{and}\quad \int_\Omega R_{i}^+u_i^2\to 0.\label{A1}\end{equation}
Since $\overline{u_i}$ is bounded, we may assume  it converges to a real number $A$ after passing to a subsequence.   By the triangle inequality, $u_i\to A$ in $L^\frac{2n}{n-2}$, and consequently, also in $L^2$ by H\"older's inequality, since $\frac{2n}{n-2}\ge 2$. It follows that $\|A\|_{L^2(\Omega)}=1$, so $A\ne 0$.

To obtain a contradiction to \eqref{A1} in case (a), we simply note that
\[\int_\Omega R_i^+u_i^2\ge \kappa\int_B u_i^2\to \kappa A^2 \vol(B)>0,\]
which contradicts the second statement in \eqref{A1}. 

In case (b), we contradict \eqref{A1} using an argument of Miao \cite{Miao02}. We use the strict mean curvature gap to force a concentration of scalar curvature as follows. First, we note that $u_i$ is harmonic outside of $U_{\delta_i}$, so by local elliptic theory $u_i\to A$ in $C^{2,\alpha}_\loc(\Omega\setminus\Sigma)$. We may now appeal to the argument of equations (65)-(76) in \cite{Miao02}, where we note that his argument does not require ``$v=1$", but in fact works for any nonzero constant, such as $A$ in our case. Miao's calculations show that 
\[\int_\Omega R_i^+ u_i^2\]
is strictly bounded below, which is a contradiction. 

Finally, in case (c), let $B$ be an open ball such that $\overline B$  does not intersect $\Sigma$ and 
 $|\Ric_g|>0$ on $\overline B$. Let $\chi$ be a nontrivial nonnegative bump function with support in $B$. Let $\overline g_t=g_{t^2}+t\chi\Ric_g$, where $g_{t^2}$ is the Miao smoothing with parameter $\delta=t^2$, just as we did in the proof of Theorem~\ref{corner-equality}. Let $\mu_t$ be the principal Neumann eigenvalue of the conformal Laplacian of $\overline g_t$ on $\Omega$, where $\Omega$ is a bounded domain containing $\Sigma$ and $\overline B$. As in cases (a) and (b), we claim that for sufficiently small $t>0$, $\mu_t>0$. Again, this would imply that $M$ carries a complete PSC metric.
 To the contrary, suppose that there exists some sequence of times $t_i\to0$ such that 
  $\mu_{t_i}\le 0$. Let $u_i$ be the positive $L^2$-normalized Neumann eigenfunction associated to $\mu_{t_i}$, and let $R_i$ denote the scalar curvature of~$\overline g_{t_i}$.

Repeating the arguments of cases (a) and (b), we find a subsequence 
and a number $A\ne 0$ such that $u_i\to A$ in $L^2$. 
Let $K$ be a compact subset of $\Omega$ contaning $\Sigma$ and $\overline B$.
If we can show that $u_i\to A$ in $L^\infty(K)$, then just as in the proof of Theorem~\ref{corner-equality},
\[\int_\Omega\frac{R_{i}}{t_i}u_i^2\to A^2\int_B\chi |\Ric_g|^2>0,\]
which would contradict the inequality $\mu_{t_i}\le 0$ because of equation~\eqref{eigenvalue}, but with $t_i$ in place of $\delta_i$.

It only remainds to prove that $u_i\to A$ in $L^\infty(K)$. Note that $|R_i|\le C$ by virtue of the condition $H_+\equiv H_-$ and parts (5) and (6) of Lemma~\ref{miaosmoothening}. It follows that 
\[\int_\Omega R_{i} v^2 \ge -C\int_\Omega v^2\]
for any $v\in L^2(\Omega)$, whence $\mu_{t_i}\ge -C$. Therefore, the functions $u_i$ satisfy the equation 
\[-a\Delta_{\overline g_{t_i}}u_i+(R_{i}-\mu_{t_i})u_i=0\] and $\sup_\Omega|R_{i}-\mu_{t_i}|\le C$. Then the De Giorgi--Nash--Moser estimate \cite[Theorem 8.17]{GT} implies that for some constant $C$,
\[\sup_K u_i\le C.\] 
By~\eqref{eigenvalue},
\[0\ge \mu_{t_i}\ge -\int_K R_{i}^-u_i^2 \ge -C\int_K R_{i}^-.\]
Since we know the right-hand side approaches zero by Lemma~\ref{miaosmoothening}, 
we see that $\mu_{t_i}\to 0$. The quantity $u_{i}-A$ solves the equation
\begin{equation}\label{eqn:DGN}
-a\Delta_{\overline g_{t_i}}(u_{i}-A)+(R_{i}-\mu_{t_i})(u_{i}-A)=-A(R_{i}-\mu_{t_i}).
\end{equation}
By the case (c) assumption, Lemma~\ref{miaosmoothening} tells us that $\|R_i\|_{L^n(\Omega)}\to 0$. Combined with $\mu_{t_i}\to 0$, we see that the right-side of equation~\eqref{eqn:DGN} converges to zero in $L^n(\Omega)$, so by another application of the De Giorgi--Nash--Moser estimate, we obtain
\[\sup_K |u_{i}-A|\to 0,\]
as claimed.
\end{proof}

\end{appendix}

\bibliographystyle{alpha}
\bibliography{noncompact-fill-ins}

\end{document}